
\documentclass[11pt,reqno]{amsart}
\usepackage{amssymb}
\usepackage{mathrsfs}
\PassOptionsToPackage{dvipsnames}{xcolor}
\usepackage{pgfplots}
\pgfplotsset{compat=1.11}
\usepackage{amsbsy}
\usepackage{amsfonts}
\usepackage{amsmath}
\usepackage{amsthm}
\usepackage{amssymb}
\usepackage{ytableau}
\usepackage[all,arc]{xy}
\usepackage{tikz-cd}
\usepackage{enumerate}

\setlength{\oddsidemargin}{0in}    
\setlength{\evensidemargin}{0in}
\setlength{\topmargin}{0.0in}
\setlength{\textwidth}{6.0in}
\setlength{\textheight}{8.9in}

\newcommand{\KKK}{K}
\newcommand{\RRR}{R}
\newcommand{\SSS}{S}
\newcommand{\II}{\mathscr{I}}
\newcommand{\NN}{\mathbb{N}}
\newcommand{\ordr}{\operatorname{ord}}

\newcommand{\PC}{\text{C}}

\newcommand{\ddR}{K}
\newcommand{\ddS}{U}
\newcommand{\dda}{d}
\newcommand{\ddnu}{\alpha}
\newcommand{\ddB}{\mathfrak{b}}

\newcommand{\DR}{R} 
\newcommand{\DD}{D}
\newcommand{\DDS}{{\tilde{R}}} 

\newcommand{\QQ}{\mathbb{Q}}

\newcommand{\ZZ}{\mathbb{Z}}

\newcommand{\ivr}{\alpha}

\newcommand{\aaa}{\mathfrak{a}} 

\newcommand{\bb}{\mathfrak{b}}
\newcommand{\cc}{\mathfrak{c}}
\newcommand{\dd}{\mathfrak{d}}
\newcommand{\ff}{\mathfrak{c}}
\newcommand{\mm}{\mathfrak{m}}
\newcommand{\pp}{\mathfrak{p}} 
\newcommand{\qq}{\mathfrak{q}}
\newcommand{\tp}{{t}}

\newcommand{\sA}{\mathcal{A}}

\newcommand{\sD}{\mathcal{D}}

\newcommand{\sT}{\mathcal{T}}

\newcommand{\ct}{{\bf ct}} 
\newcommand{\Cl}{{\rm Cl}}

\newcommand{\T}{\mathcal{T}} 

\newcommand{\PP}{\mathbb{P}}
%
%

\newcommand{\order}{\operatorname{\it v}} 
\newcommand{\iv}{\it v}
\newcommand{\dnu}{\operatorname{\it v}_t}
\newcommand{\nuB}{\operatorname{\it v}_\mathfrak{b}}

%
%
\newcommand{\Spec}{\operatorname{\mathcal{P}}}

\newcommand{\Prim}{{\rm Prim}}

\newcommand{\sI}{\mathscr{I}}


\theoremstyle{plain}
\newtheorem{theorem}{Theorem}[section]
\newtheorem{prop}[theorem]{Proposition}
\newtheorem{corollary}[theorem]{Corollary}
\newtheorem{lemma}[theorem]{Lemma}

\newtheorem{thm}[theorem]{Theorem} 
\newtheorem{lem}[theorem]{Lemma}

\theoremstyle{definition}
\newtheorem{definition}[theorem]{Definition}

\newtheorem{example}[theorem]{Example}

\newtheorem{defn}[theorem]{Definition} 

\theoremstyle{remark}
\newtheorem{remark}[theorem]{Remark}
\newtheorem{rem}[theorem]{Remark}

\numberwithin{equation}{section}

\definecolor {UMblue}  {RGB}{0, 39, 76}
\definecolor {UMmaize} {RGB}{255, 203, 5}

\definecolor {color_b}{RGB}{255,0,0}
\definecolor {color_c}{RGB}{20, 200, 30}
\definecolor {color_a}{RGB}{0,0,255}

\definecolor{lgreen} {RGB}{180,210,100}
\definecolor{dblue}  {RGB}{20,66,129}
\definecolor{ddblue} {RGB}{11,36,69}
\definecolor{lred}   {RGB}{220,0,0}
\definecolor{nred}   {RGB}{224,0,0}
\definecolor{norange}{RGB}{230,120,20}
\definecolor{nyellow}{RGB}{255,221,0}
\definecolor{ngreen} {RGB}{98,158,31}
\definecolor{dgreen} {RGB}{78,138,21}
\definecolor{nblue}  {RGB}{28,130,185}
\definecolor{jblue}  {RGB}{20,50,100}



\title[$B$-orderings and generalized factorials]{$B$-orderings for all ideals $B$ of Dedekind domains and generalized factorials} 
\author{Jeffrey C. Lagarias}
\address{Department of Mathematics, University of Michigan, Ann Arbor, MI 48109--1043, USA.}
\email{lagarias@umich.edu} 
\author{Wijit Yangjit} 
\email{yangjit@umich.edu}
\date{February 26, 2025}

\begin{document}

\begin{abstract}
This paper extends Bhargava's theory of $\mathfrak{p}$-orderings of subsets $S$ of a Dedekind ring $R$ valid for prime ideals $\mathfrak{p}$ in $R$. Bhargava's theory defines for integers $k\ge1$ invariants of $S$, the generalized factorials $[k]!_S$, which are ideals of $R$. This paper defines $\mathfrak{b}$-orderings of subsets $S$ of a Dedekind domain $D$ for all nontrivial proper ideals $\mathfrak{b}$ of $D$. It defines generalized integers $[k]_{S,\mathcal{T}}$, as ideals of $D$, which depend on $S$ and on a subset $\mathcal{T}$ of the proper ideals $\mathscr{I}_D$ of $D$. It defines generalized factorials $[k]!_{S,\mathcal{T}}$ and generalized binomial coefficients, as ideals of $D$. The extension to all ideals applies to Bhargava's enhanced notions of $r$-removed $\mathfrak{p}$-orderings, and $\mathfrak{p}$-orderings of order $h$.
\end{abstract}

\keywords{Generalized binomial coefficients, Generalized factorials}

\subjclass{Primary: 13F05, 
Secondary: 
11A63, 
11B65, 
13F20}

\maketitle
\setlength{\baselineskip}{1.0\baselineskip}

%
%

\section{Introduction}\label{sec:1}
This paper extends Bhargava's theory of $\pp$-orderings and $\pp$-sequences, indexed by
prime ideals $\pp$, for arbitrary subsets $\SSS$ of a Dedekind domain $\DD$, to $\bb$-orderings and $\bb$-sequences,
 for all ideals $\bb$, for arbitrary subsets
$\SSS$ of a Dedekind domain $\DD$.

In 1996 Bhargava \cite{Bhar:96} developed  a theory of 
$\pp$-orderings and $\pp$-sequences, indexed by prime ideals $\pp$,  for arbitrary subsets $\SSS$ 
of  a class of commutative rings $\DR$ 
 called   Dedekind-type rings. 
 A  {\em Dedekind-type ring } is a Noetherian, locally principal ring in which all nonzero primes are maximal; it 
 may have zero divisors. Any Dedekind-type  ring is either a Dedekind domain or is a quotient of a Dedekind domain by a nonzero ideal. 
 Bhargava's  $\pp$-sequences are 
invariants of the set $\SSS$, given as an infinite sequence  of ideals $\nu_k(\SSS, \pp) = \pp^{\alpha_k(\SSS, \pp)} $ of $\DR$.
With these invariants he defined generalized factorials $k!_{\SSS}$ and generalized binomial coefficients
 ${ {k+ \ell}\choose{k}}_{\SSS}$ associated to the set $\SSS$ as ideals of $\DR$. 
 For Dedekind domains $\DD$ these   invariants 
have applications in specifying rings of polynomials $f(x) \in K[x]$, where $K$ is the quotient field of $\DD$,
that are integer-valued on the domain $\SSS$.
 
 In 2009 Bhargava   \cite{Bhar:09} introduced
  refined  invariants for arbitrary subsets $\SSS$ of a fixed Dedekind domain $\DD$
   associated 
 to various generalized types of  refined $\pp$-orderings and associated  $\pp$-sequences,
 defined for each nonzero prime ideal $\pp$ of $\ddR$. These  refined invariants depend on extra parameters
 $r \in \NN$ and $h \in \NN \cup \{ +\infty\}$.
The most general of these are {\em $r$-removed $\pp$-orderings of order $h$};
 the original $\pp$-orderings correspond to the choice $r=0$ and $h= +\infty$.
Bhargava used them to define corresponding generalized factorials $k!_{\SSS}^{h, \{r\}}$, 
and generalized binomial coefficients
${ {k+ \ell}\choose{k}}_{\SSS}^{h, \{r\}}$. 
He applied the generalized invariants to 
characterize rings of polynomials that are integer-valued 
under differencing operations on
a given subset of a Dedekind domain  $\DD$, and
to characterize bases of rings of smooth functions on compact subsets of local fields, giving orthonormal bases
of Banach spaces of locally analytic functions in terms of generalized binomial polynomials. 
 
 The extensions of this paper show  that Bhargava's  (additive) definitions of refined $\pp$-orderings
 used in \cite{Bhar:09},  give well-defined invariants 
 for  all ideals $\bb$ of a Dedekind domain $\DD$, rather than just prime ideals. 
 To formulate results, we use in place of  
 Bhargava's $\bb$-sequences, which are  ideals of $\DD$,
 the equivalent data of  {\em $\bb$-exponent sequences}, which are 
   the exponents of Bhargava's $\bb$-sequence ideals  as powers of $\bb$,
  and are nonnegative integers (or $+\infty$).

A first result is the  extension to 
all ideals of Dedekind domains of Bhargava's original $\pp$-orderings.

%
%
\begin{theorem}[Well-definedness of the $\bb$-exponent sequence of $S$ in Dedekind domains]\label{thm:well-definedness0}
Let $\DD$ be a Dedekind domain. 
Then for all 
ideals $\bb$ of $\DD$ and 
all nonempty subsets $\SSS$  of the ring $\DD$, if  $\mathbf{a}_1$ and $\mathbf{a}_2$ are 
 $\bb$-orderings of $S$,
 then 
 $\ivr_i\left(S,\bb,\mathbf{a}_1\right)=\ivr_i\left(S,\bb,\mathbf{a}_2\right)$ for all $i=0,1,2,\dots$.
\end{theorem}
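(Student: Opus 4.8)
The plan is to reduce the statement to the combinatorics of balanced distributions on a nested family of residue classes, exactly as in Bhargava's treatment of the prime case, the one new ingredient being the verification that the $\bb$-adic order retains the required non-archimedean structure even though $\bb$ need not be prime. Write $v_\bb(x)=\max\{n\ge 0: x\in\bb^n\}$ (with $v_\bb(0)=+\infty$); in the additive formulation the exponent recorded at step $i$ is $\alpha_i(S,\bb,\mathbf a)=\sum_{j<i} v_\bb(a_i-a_j)$, and a $\bb$-ordering is built by choosing each $a_i\in S$ so as to minimize this sum. The essential point is that although $v_\bb$ fails to be additive over products when $\bb=\prod_t \pp_t^{e_t}$ is composite — indeed $v_\bb(x)=\min_t\lfloor v_{\pp_t}(x)/e_t\rfloor$ — it still satisfies the ultrametric inequality $v_\bb(x-z)\ge\min(v_\bb(x-y),v_\bb(y-z))$, because each $v_{\pp_t}$ does and both $\lfloor\cdot/e_t\rfloor$ and $\min_t$ preserve this inequality. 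Equivalently, the ``balls'' $x+\bb^n$ are cosets of the nested chain $\bb\supseteq\bb^2\supseteq\cdots$, so the partitions $P_m$ of $S$ into classes modulo $\bb^m$ refine as $m$ grows. This nested-partition structure is all that Bhargava's argument ever uses.

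First I would record the counting identity. Since $v_\bb(x-y)=\#\{m\ge1: x\equiv y \pmod{\bb^m}\}$, summing over pairs gives, for any finite tuple $C=(c_0,\dots,c_n)$ from $S$,
\[
\sum_{0\le j<i\le n} v_\bb(c_i-c_j)=\sum_{m\ge1}\ \sum_{A\in P_m}\binom{|C\cap A|}{2}.
\]
In particular $\sum_{i=0}^n\alpha_i(S,\bb,\mathbf a)$ equals the left-hand side for $C=(a_0,\dots,a_n)$. I would then define $d_n(S,\bb)$ to be the minimum of this double sum over all $(n+1)$-element tuples from $S$ (repetitions allowed, a repeated element contributing $+\infty$). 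This $d_n$ depends only on $S$ and $\bb$, not on any ordering. The theorem then follows from the single claim that a $\bb$-ordering attains this minimum, $\sum_{i=0}^n\alpha_i=d_n(S,\bb)$ for every $n$: granting it, $\alpha_n(S,\bb,\mathbf a)=d_n(S,\bb)-d_{n-1}(S,\bb)$ (with $d_{-1}=0$) is manifestly independent of $\mathbf a$, and the asserted equality $\alpha_i(S,\bb,\mathbf a_1)=\alpha_i(S,\bb,\mathbf a_2)$ drops out.

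The heart of the matter, and the step I expect to be the main obstacle, is proving this claim — that the greedy, locally optimal $\bb$-ordering realizes the global minimum $d_n$. Because $\binom{\cdot}{2}$ is convex, minimizing $\sum_{A\in P_m}\binom{|C\cap A|}{2}$ for a fixed number of points amounts to spreading the points as evenly as possible among the classes at every level $m$ simultaneously, subject to the top-down refinement constraints imposed by the chain $P_1, P_2,\dots$. I would prove, by an exchange/rearrangement argument (or by induction on $n$ together with induction on the level $m$), that the greedy rule — at each step choosing $a_i$ to minimize $\sum_{j<i}v_\bb(a_i-a_j)$, i.e.\ placing the new point into an emptiest available nested chain of balls — produces a configuration that is balanced at every level, hence optimal. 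The same convexity shows that the optimal value $d_n$ is a function of the class-size data alone, so any two $\bb$-orderings, optimal though built from different elements, yield the same partial sums.

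Finally I would dispose of the degenerate cases uniformly. If $S$ is finite (or more generally meets only finitely many classes modulo some $\bb^m$), the greedy process is eventually forced to repeat an element and the corresponding $\alpha_i$ become $+\infty$; the convention $v_\bb(0)=+\infty$ makes the identity above and the formula $\alpha_n=d_n-d_{n-1}$ persist in $\NN\cup\{+\infty\}$, so the conclusion survives. The only genuinely new verification relative to Bhargava's prime-ideal argument is the ultrametric property of $v_\bb$ noted in the first paragraph; once that is in hand, the non-additivity of $v_\bb$ over products is sidestepped by the additive formulation, and the balanced-distribution machinery transfers verbatim with $\pp^m$ replaced by $\bb^m$.
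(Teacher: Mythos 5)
Your setup is sound: the identity $v_{\bb}(x-y)=\#\{m\ge 1:\, x\equiv y\ (\bmod\ \bb^m)\}$, the resulting counting formula over the nested partitions $P_m$ of $\SSS$ modulo $\bb^m$, and the ultrametric inequality for $v_{\bb}$ are all correct, and the theorem would indeed follow from your central claim that every $\bb$-ordering attains the ordering-free minimum $d_n(S,\bb)$ for every $n$ (the $\infty-\infty$ degeneracy for $n\ge\vert\SSS\vert$ is harmless, since minimization forces the first $\vert\SSS\vert$ terms to be distinct and all later exponents to equal $+\infty$ in every ordering). Be aware, though, that this claim is not something you can treat as known: it is genuinely different from the paper's route, which proves the theorem by constructing a congruence-preserving map $\varphi_{\bb}\colon\DD\to\DD(X)[[t]]$ (Claborn's embedding of $\DD$ into a principal ideal domain, followed by digit expansions) and then invoking Bhargava's local theorem, or alternatively the max-min criterion (Theorem \ref{thm:max-min}). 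The statement you need --- that the partial sums of a $\bb$-ordering are an absolute minimum over all $(n+1)$-tuples --- appears in the authors' work only as a \emph{corollary} of the max-min criterion, derived after well-definedness is already established; in your scheme it is the entire content of the theorem and must be proved from scratch.

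That is where the gap lies: you do not prove the claim, and the principle you propose for proving it is false. Neither the optimal configuration nor the greedy one is ``balanced at every level,'' and $d_n$ is not determined by spreading points evenly subject to refinement constraints. Concretely, take $\DD=\ZZ$, $\bb=(2)$, and $\SSS=A\cup B$, where $A$ consists of $10$ even integers that are pairwise congruent modulo $2^{10}$, and $B$ consists of $10$ odd integers that are pairwise incongruent modulo $4$. For four points, the level-$1$-balanced split $(2,2)$ costs at least
\begin{equation*}
\underbrace{10\cdot\binom{2}{2}}_{\text{the pair in }A\text{ stays together through level }10}\;+\;\underbrace{\binom{2}{2}}_{\text{the pair in }B}\;=\;11,
\end{equation*}
whereas the unbalanced split $(1,3)$ costs $\binom{3}{2}=3$, which is the true minimum $d_3$ and is exactly what the greedy rule produces. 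So balance at a fixed level is not the objective being optimized: the optimum depends on how each residue class keeps subdividing at \emph{all} deeper levels, and any exchange/rearrangement argument must manage precisely these cross-level trade-offs. Handling them is the real difficulty --- it is why Bhargava's proofs and this paper's proof proceed via polynomial (max-min) certificates or a transfer to $\KKK[[t]]$ rather than by direct balancing --- and your proposal leaves that step unproven, with its sketched justification resting on a premise that the example above refutes.
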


\noindent The special case of $\DD= \ZZ$ of this theorem was proved in \cite{LY:24a}.

The main result of the paper is the well-definedness of $\bb$-exponent sequences for Dedekind domains,
for  all types of Bhargava's  generalized  orderings, which include two extra parameters $(h ,\{r\})$.
It is stated as   Theorem \ref{thm:NEW-MAIN} in Section \ref{sec:2}.
Using Theorem   \ref{thm:NEW-MAIN}  we define  generalized factorials and generalized binomial coefficients,  as fractional ideals of the ring $\DD$, 
and later show they are integral ideals. We deduce  divisibility properties for such generalized coefficients.

 The existence of 
  $\bb$-exponent sequence invariants for all ideals $\bb$
  inserts an extra degree of freedom $\sT$  in  Bhargava's theory. 
  Given  an arbitrary (nonempty) subset $\SSS$ of a Dedekind domain $\DD$,
and an arbitrary collection $\sT$ of  nonzero proper ideals   of $\DD$
 it permits defining  generalized factorials $(n)_{S, \sT}$, in a partially factorized form, with the definition,
 \begin{equation*}\label{def:factorials-S-T}
 [n]!_{S, \sT}^{h,r}= \prod_{\bb \in \sT} \bb^{  \alpha_n^{h, \{r\} }(S,  \bb)}.
 \end{equation*}
 (Only finitely many terms in the product
 are not the unit ideal, see Lemma \ref{lem:60}.)
We  define  generalized binomial
 coefficients  ${k+\ell \brack\ell}_{S,\sT}^{h, \{r\}} $, for $k , \ell \ge 0$. 
 The binomial coefficients are initially defined as fractional ideals in the ring $\DD$, and then shown  to be  integral ideals.
We establish  divisibility properties under  set inclusions in  the parameter $\sT$, and expect similar properties to hold under
set inclusion in the parameter $S$; see Section \ref{sec:8}. 
The  $\bb$-ordering  invariants are also defined for the  zero ideal $\bb=(0)$ and unit ideal $\bb=(1)$ cases, these cases always
contribute factors of $(1)$ or $(0)$; thus $\bb$-ordering invariants are  defined for all ideals in $\DD$.

 Bhargava's $\pp$-invariants  have important meanings as invariants of rings that describe
which functions on $\SSS$ extend to polynomial functions on $\DD$.  The general $\bb$-sequence invariants
shown to exist in Theorem \ref{thm:NEW-MAIN} do not, to our knowledge,  come with 
any parallel  combinatorial interpretation  related to polynomial functions.
It  remains a  topic for further investigation to find structural interpretations of these invariants. 

%
%

\subsection{Methods} 

The proofs establish well-definedness of  the $\bb$-sequences  for $\SSS$ for Dedekind domains  $\DD$ by 
giving a mapping from the Dedekind domain $\DD$ to a local ring  $\KKK[[t]]$, with  $\KKK$ a field, 
where they can be identified with $\mm$-sequences of the image set $U$ for the maximal ideal $\mm$
of the local ring,  a case already covered by Bhargava's theory. Such maps are defined for a  {\em fixed} ideal $\bb$,
 but are required to   {\em simultaneously work for {all} subsets $\SSS$} of $\DD$.
The  mappings constructed do not respect either of the ring operations: ring  addition and ring multiplication. 
They  do preserve the $\bb$-ordering property,  mapping it to  an $\mm$-ordering   property and
with  identical $\bb$-exponent sequence invariants  and $\mm$-exponent sequence invariants. 
The mappings are nonconstructive in the general case and
depend on the Axiom of Choice (Note that Dedekind domains may have arbitrary cardinality.)

The proofs reduce the well-definedness of $\bb$-orderings for nonzero proper ideals $\bb$ 
to a special  case of   Bhargava's for refined  $\pp$-orderings, for
the maximal ideal $\mm= t \KKK[[t]]$ of a power series ring  over a field $\KKK[[t]]$. 
We state Bhargava's special case for  $\KKK[[t]]$ as Theorem \ref{thm:T-invariant}, and use it as a black box.
In \cite{LY:24a} we gave  an alternate proof for this special case for  ``plain $t$-orderings"($r=0$, $h=+\infty$)  based on a more 
analytical viewpoint (\cite[Theorem 4.5]{LY:24a}).

%
%

\subsection{Prior work}

The  original  Bhargava $\pp$-ordering construction, restricted to prime ideals, 
 has been extended in many directions, including multivariable polynomial generalizations, see 
 Evrard \cite{Evrard:12}, Rajkumar et al. \cite{RRP:18}.
 For applications of the refined $p$-orderings to rings of integer-valued
 polynomials, see Bhargava et al   \cite{BharCY:09}. For extensions of refined $p$-orderings
 in defining generalized regular sets in local fields, see Chabert et al \cite{CEF:13}.
 For use of refined $p$-orderings on $\ZZ$
 for the set $S=\PP$ of prime numbers, see Chabert \cite{Chab:15}.
 For methods for computing $r$-removed $p$-orderings and $p$-orderings of order $h$ on $R= \ZZ$,
 see Johnson \cite{John10}.

  Bhargava's original $\pp$-ordering definitions given in \cite{Bhar:96},  \cite{Bhar:00} (see Definition \ref{def:p-ordering} of this paper)  do not
  generalize  to composite ideals. 
  Counterexamples to the  $\bb$-sequenceability property
 for the  definition $\ivr_k^{\ast}(S, \bb, \mathbf{a})$ in \eqref{defn:123}
 for composite bases over $\ZZ$ are given in \cite[Example 3.4]{LY:24a}. 
   In this paper we show that Bhargava's refined $\pp$-ordering definitions in  \cite{Bhar:09} do extend to work for all ideals $\bb$.
  The paper  \cite{Bhar:09}  did  not raise  the possibility of such  extensions, and
  the proofs for well-definedness of refined $\pp$-ordering invariants  given there
   use the prime ideal property.

The  paper \cite{LY:24a} of the authors  constructed  
$(b)$-orderings   for arbitrary subsets $\SSS$ of the integers $\ZZ$, for all ideals $(b)$ 
of $\ZZ$. The case $R=\ZZ$ has two
simplifying features, compared to  the general Dedekind domain $\DD$.
First,  $\ZZ$ is a principal ideal domain, and the construction in Section \ref{subsec:proofs-general}
of this paper  is not needed.
Second, 
for $R=\ZZ$ the the multiplicative monoid  $\sI(\ZZ)$ of all ideals of $\ZZ$ under ideal multiplication
 is isomorphic to
the multiplicative monoid of nonnegative elements of $\ZZ$, and there are only two units $\pm 1$,
permitting generalized factorials and binomial coefficients 
to be directly defined as ring elements of $\ZZ$, rather  than as  integral ideals of $\ZZ$.  
These two features permit the generalized factorials and binomial
coefficients in the special case $S=\ZZ$  to be identified with classical factorials and binomial coefficients.
In addition  for   the $\ZZ$-case  the paper \cite{LY:24a} 
gave  for each fixed ideal $(b)$ 
 a direct construction of  the  associated congruence-preserving maps  
 in terms of polynomials encoding base $b$ digit expansions.
 The paper \cite{LY:24a} is a revised and augmented version
 of a chapter in  the second author's PhD thesis \cite{{Yangjit:22}}.


%
%
 \subsection{Contents }\label{subsec:12}
Section \ref{subsec:11}  reviews Bhargava's theory of $\pp$-orderings.
Section \ref{sec:2} presents the modified definition of $\bb$-orderings and $\bb$-sequences,
for subsets $\SSS$ of a Dedekind domain, which parallels Bhargava's definition in \cite{Bhar:09},  and states  the main results. 
 The principal result  is the well-definedness of the  $\bb$-sequence invariants
 for Dedekind domains,  Theorem \ref{thm:NEW-MAIN}.  
 We apply it  to define  generalized factorials, generalized binomial coefficients,  and generalized integers
 as fractional ideals of  $\DD$, and subsequently prove they are integral ideals,
 The proofs follow in Sections \ref{sec:Bhargava-K[[t]]} to \ref{sec:6}.

\subsection{Notation}\label{subsec:13}

In this paper   $R$ denotes a general commutative ring with unit, and  $\sI(R)$ denotes the set of all
 ideals of   $R$. 
 The set $\sI^{\ast}(R):= \sI(R) \smallsetminus \{(0)\}$ denotes 
 the set of all nonzero ideals of $R$.
 The notation    $\DD$ denotes a Dedekind domain, and $\ddR$ denotes a field, of  any characteristic.
%
%

\section{ Bhargava's $\pp$-orderings and generalized factorials}\label{subsec:11}

We review Bhargava's theory of $\pp$-orderings, as given in \cite{Bhar:96}, \cite{Bhar:00}, \cite{Bhar:09}.  
We suppose $\DR$ is a Dedekind ring, i.e. either a Dedekind domain or a quotient of a Dedekind domain
by a  nonzero proper ideal. 
(A proper ideal $\aaa$ is one with $\aaa \subsetneq  \DD$.)

%
%
 \subsection{Bhargava's generalized factorials}\label{subsec:21}
  Bhargava's generalized factorials are associated to nonempty sets $S$ of elements of $\DR$ and 
  (a subset $\sT$ of) 
    the set of all nonzero prime ideals of $\DR$, denoted
\begin{equation}
\PP\left(\DR\right)=\{\pp:\pp\text{ is a nonzero prime ideal in }\DR\}.
\end{equation}
Bhargava's generalized factorials $k!_S$ are ideals in $\DR$.

For each nonzero prime ideal $\pp$ in $\DR$ and a nonempty subset $S$ of $\DR$, he assigned an associated $\pp$-sequence 
$\left(\nu_k(S,\pp)\right)_{k=0}^\infty$ of $S$ in which $\nu_k(S,\pp)$ is a power of $\pp$. 
We can write
\begin{equation}
\nu_k(S,\pp)=\pp^{\ivr_k^{\ast}(S,\pp)},
\end{equation}
where $\ivr_k^{\ast}(S,\pp)\in\mathbb{N}\cup\{\infty\}$, with the conventions $\pp^0=\DR$ and $\pp^\infty=(0)$.
He constructed the associated $\pp$-sequence using $\pp$-orderings of $S$.

\begin{defn}\label{def:p-ordering}
 A \emph{$\pp$-ordering of $S$} is any sequence $\mathbf{a}=\left(a_i\right)_{i=0}^\infty$ of elements of $S$ in
 a Dedekind ring $\DR$ that can be formed recursively as follows:
\begin{enumerate}
\item[(i)] $a_0\in S$ is chosen arbitrarily;
\item[(ii)] Given $a_j\in S$, $j=0,\dots,i-1$, the next element $a_i\in S$ is chosen so that it minimizes the highest power of $\pp$ dividing the product $\prod_{j=0}^{i-1}\left(a_i-a_j\right)$.
\end{enumerate}

\end{defn} 

We note that:
\begin{enumerate}
\item[(1)] $\pp$-orderings of $S$ exist.
\item[(2)] This construction does not give a unique $\pp$-ordering of $S$ if $\vert S\vert>1$.
\item[(3)] A $\pp$-ordering of $S$ does not need to include all the elements of $S$.
\end{enumerate}

Bhargava defines the 
ideal  $\nu_i(S,\pp,\mathbf{a})$  (depending on the $\pp$-sequence $\mathbf{a}$) to be the highest power of $\pp$ dividing $\prod_{j=0}^{i-1}\left(a_i-a_j\right)$. 
Bhargava calls the sequence of ideals  $\left(\nu_i(S,\pp,\mathbf{a})\right)_{i=0}^\infty$ the \emph{$\pp$-sequence of $S$ associated to the $\pp$-ordering $\mathbf{a}$}. 
We write
\begin{equation}\label{eq:nu-to-alpha}
\nu_i(S,\pp,\mathbf{a})=\pp^{\ivr_i^{\ast}(S,\pp,\mathbf{a})},
\end{equation}
where
\begin{equation}\label{eq:Bharhava-defn}
\ivr_i^{\ast}(S,\pp,\mathbf{a}):=\order_\pp\left(\prod_{j=0}^{i-1}\left(a_i-a_j\right)\right)
\end{equation}
and $\order_\pp(\cdot)$ is
given by
\begin{equation}
\order_\pp(a):=\sup\left\{\alpha\in\mathbb{N}:\pp^\alpha\text{ divides } a \DR \right\} \in \NN \cup \{ +\infty\}. 
\end{equation}
Here $\NN= \{ 0, 1, 2, ...\}$ denotes the set of nonnegative integers.

The minimization condition in Definition \ref{def:p-ordering}  can be written 
\begin{equation}\label{eqn:p-ordering0}
\order_{\pp}\left( \prod_{j=0}^{i-1} \left(a_i-a_j\right)\right) =\min_{s\in \SSS}\,\order_{\pp} \left( \prod_{j=0}^{i-1}\left(s-a_j\right) \right).
\end{equation}

Bhargava \cite[Theorem 1]{Bhar:97a}
 proved the following main result:

%
%

\begin{thm}[Well-definedness of the $\pp$-sequence of $S$]\label{thm:11}
The  $\pp$-sequence $\left(\nu_i(S,\pp,\mathbf{a})\right)_{i=0}^\infty$
of a subset $S$ of a Dedekind ring $\DR$ is independent of the choice of $\pp$-ordering $\mathbf{a}$.
\end{thm}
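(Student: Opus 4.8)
The plan is to reduce the claim to a single global-minimization statement about Vandermonde-type products and then to prove that the greedy construction in Definition \ref{def:p-ordering} attains that global minimum.

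First I would pass from the individual exponents to their partial sums. For a $\pp$-ordering $\mathbf{a} = (a_i)$ set $\Delta_n(\mathbf{a}) := \prod_{0\le j<i\le n-1}(a_i - a_j)$. Factoring this product by its largest index gives the telescoping identity
\[
\order_{\pp}\big(\Delta_n(\mathbf{a})\big) = \sum_{i=0}^{n-1}\order_{\pp}\Big(\prod_{j=0}^{i-1}(a_i-a_j)\Big) = \sum_{i=0}^{n-1}\ivr_i^{\ast}(S,\pp,\mathbf{a}),
\]
valid in $\NN\cup\{+\infty\}$ with the conventions $\pp^0=\DR$, $\pp^\infty=(0)$ (the $i=0$ term is the vacuous product, contributing $0$). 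Since $\ivr_i^{\ast} = \big(\sum_{k\le i}\ivr_k^{\ast}\big) - \big(\sum_{k<i}\ivr_k^{\ast}\big)$, it suffices to prove that every partial sum $\sum_{i<n}\ivr_i^{\ast}(S,\pp,\mathbf{a})$ is independent of $\mathbf{a}$; the individual exponents are then recovered as consecutive differences, and the (possible) transition to the value $+\infty$ is itself governed by $S$ and $\pp$ alone.

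Second, I would reformulate each partial sum as a minimum. Because $\order_{\pp}(\Delta_n)$ is symmetric in its entries, it depends only on the \emph{set} $\{a_0,\dots,a_{n-1}\}$. The defining property \eqref{eqn:p-ordering0} says exactly that at each step the $\pp$-ordering appends the element of least \emph{marginal valuation} $\order_{\pp}\big(\prod_{j<i}(s-a_j)\big) = \sum_{j<i}\order_{\pp}(s-a_j)$; that is, a $\pp$-ordering is the greedy algorithm for the set function $T\mapsto \order_{\pp}\big(\prod_{\{x,y\}\subseteq T}(x-y)\big)$. Thus the claim reduces to the assertion that the greedy prefix $\{a_0,\dots,a_{n-1}\}$ minimizes $\order_{\pp}(\Delta_n)$ over all $n$-element subsets of $S$, a quantity manifestly independent of $\mathbf{a}$.

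The main obstacle is this greedy-optimality statement, and I expect to prove it by exploiting the ultrametric structure of $\order_{\pp}$ rather than by a generic matroid argument. Working in the localization $\DR_\pp$ (a discrete valuation ring, or an Artinian chain ring in the quotient case), the residue classes modulo $\pp^\ell$ are nested as $\ell$ grows, and
\[
\order_{\pp}\big(\Delta_n(a_0,\dots,a_{n-1})\big) = \sum_{\ell\ge 1}\#\{(i,j): j<i<n,\ a_i\equiv a_j \pmod{\pp^\ell}\} = \sum_{\ell\ge1}\sum_{c}\binom{m_c^{(\ell)}}{2},
\]
where $m_c^{(\ell)}$ counts how many of the first $n$ chosen elements fall in the residue class $c$ at level $\ell$ (a class containing $k$ of them contributes $\binom{k}{2}$ pairs). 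I would then show that the greedy choice, at every step and simultaneously at every level $\ell$, places the new element in a least-occupied class along its branch of the $\pp$-adic tree, so that the occupancy profile $\{m_c^{(\ell)}\}$ is as balanced as the tree permits at every level at once; the resulting balanced profile, and hence $\sum_{\ell}\sum_{c}\binom{m_c^{(\ell)}}{2}$, is then determined by $S$, $\pp$, and $n$ alone. This yields global optimality and choice-independence in one stroke. The delicate point—the crux of the argument—is the verification that greedy balancing at one level never forces imbalance at another, which is precisely where the nested (ultrametric) nature of powers of the \emph{prime} ideal $\pp$ is essential and cannot be relaxed to composite ideals.
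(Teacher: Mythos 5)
Your reduction is sound as far as it goes: the telescoping identity, the passage to partial sums, and the reformulation of well-definedness as the claim that every greedy prefix attains $\min_T \order_{\pp}\bigl(\Delta_n(T)\bigr)$ over $n$-element subsets $T\subseteq S$ are all correct, and that minimality claim is indeed a true theorem which would yield Theorem \ref{thm:11}. The genuine gap is that your proposed proof of this crux lemma is not merely unfinished but false as stated: it is not true that the greedy choice places each new element in a least-occupied class at every level, nor that the optimal occupancy profile is ``as balanced as the tree permits at every level at once''; trade-offs between levels genuinely occur. Concretely, take $\DR=\ZZ$, $\pp=(3)$, $S=\{0,\,3^{10},\,1,\,4,\,7\}$, $n=4$. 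The only way to achieve the minimal level-$1$ cost $\sum_c\binom{m_c^{(1)}}{2}=2$ is to take both of $\{0,3^{10}\}$ together with two of $\{1,4,7\}$, and any such subset has total Vandermonde valuation $10+1=11$, since the pair $(0,3^{10})$ keeps paying at levels $2,\dots,10$. The global minimum is instead $3$, attained by $\{0,1,4,7\}$, whose level-$1$ profile is the \emph{unbalanced} $(1,3)$ with level-$1$ cost $3$. Thus no $4$-element subset is optimal at all levels simultaneously, so the nested (ultrametric) structure does not rescue the balancing argument. One can also see the mechanism fail stepwise: a greedy ordering here is $0,1,4,7$ (exponents $0,0,1,2$, total $3$), and at the last step greedy puts $7$ into the level-$1$ class already containing two chosen elements rather than $3^{10}$ into the class containing one. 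What is true is only that greedy minimizes the \emph{total} $\sum_\ell\sum_c\binom{m_c^{(\ell)}}{2}$, and proving that requires a different device, e.g.\ an exchange/induction argument comparing two orderings, or the characterization $\ivr_k^{\ast}(S,\pp)=\max_f\min_{s\in S}\order_{\pp}(f(s))$ over monic (or primitive) polynomials $f$ of degree $k$, which is manifestly independent of the ordering; the polynomial $q_k(x)=\prod_{j<k}(x-a_j)$ then plays exactly the role your balancing claim was meant to play.

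For comparison with the paper: the paper does not prove Theorem \ref{thm:11} at all, but quotes it from Bhargava \cite{Bhar:97a}; the well-definedness proofs the paper does supply (for the generalization to arbitrary ideals $\bb$) proceed via a congruence-preserving embedding (Property $\PC$) into a power series ring $\KKK[[t]]$, followed either by Bhargava's local theorem used as a black box or, for plain orderings, by the max-min criterion of Theorem \ref{thm:max-min}. So your Vandermonde-minimization route is genuinely different and could give a self-contained elementary proof, but only after the balancing step is replaced by an exchange or max-min argument.
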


By Theorem \ref{thm:11},  one may write $\nu_i(S,\pp)=\nu_i(S,\pp,\mathbf{a})$ 
as an invariant under the choice of $p$-ordering $\mathbf{a}$ and call 
$\left(\nu_i(S,\pp)\right)_{i=0}^\infty$ the associated {\em $p$-sequence} of $S$.

The generalized factorials of $S$, denoted $k!_S$, are defined as in \cite[Definition~7]{Bhar:00} by
\begin{equation*}
k!_S:=\prod_{\pp\in\PP\left(\DR\right)}\nu_k(S,\pp),
\end{equation*}
where $\PP(\DR)$ denotes the set of all nonzero prime ideals of $\DR$.
All but finitely many terms in the product are the unit ideal $\DD$, so the right side is a well-defined integral ideal in $\DD$.
More generally, for subsets $\sT \subseteq \PP(\DR)$ of nonzero prime ideals one can define generalized factorials
\begin{equation}
k!_{S, \sT} :=\prod_{\pp\in\sT }\nu_k(S,\pp).
\end{equation}
 
In the remainder  of this paper we use as  invariants  the exponents $\ivr_i^{\ast}(S, \pp, \mathbf{a})$, which are
nonnegative integer-valued (or $+\infty$),
in place of  the $\pp$-sequence
ideals $\nu_i(S,\pp,\mathbf{a})$. 


\begin{defn}\label{defn:123}
 The \emph{$\pp$-exponent sequence of $\SSS$ associated to
 the $\pp$-ordering $\mathbf{a}$  } 
 is the sequence $\{ \ivr_i^{\ast} (S, \pp, \mathbf{a}):\,  i \ge 1\}$
 given by
 \begin{equation}\label{eq:def12}
\ivr_i^{\ast}(S,\pp,\mathbf{a}):=\order_\pp\left(\prod_{j=0}^{i-1}\left(a_i-a_j\right)\right).
\end{equation}
 Here $\ivr_i^{\ast}(S, \pp, \mathbf{a})= \NN \cup \{+\infty\}$, with the convention to use $+\infty$ when $\nu(S, \pp, \mathbf{a}) = (0)$.
 \end{defn}

%
%
 \subsection{Bhargava's refined $\pp$-orderings }\label{subsec:22}

In \cite{Bhar:09}, Bhargava  introduced for Dedekind domains $\DD$ several refined versions of $\pp$-orderings,
which depend on extra parameters.

To define them, for a prime ideal $\pp$, let $\nu_{\pp}(\cdot): \DD \to \NN \bigcup \{+\infty\}$
be the additive valuation which   
for each nonzero  element $a \in \DD$  sets $\nu_{\pp}(a)=e$
where  $\pp^e$ is the highest power of $\pp$ dividing the principal ideal $(a)$,
and set  $\nu_{\pp}(0) = +\infty$.

\begin{defn}[Refined $\pp$-orderings]\label{def:refined-p-ordering}
Given a set $\SSS$ of a Dedekind domain $\DD$ and a prime ideal $\pp$.
\begin{enumerate}
\item[(1)]
 An {\it $r$-removed $\pp$-ordering of $\SSS$},  $\mathbf{a}^{\{r\}} =(a_i^{\{r\}})_{i \ge 0}$, is one
 constructed as follows:

For fixed $r \in \NN$ choose $ a_i^{ \{r\}} \in \SSS$ for $0 \le i \le r$ arbitrarily. 
For larger values, having chosen $\{a_0^{\{r\}}, a_1^{\{r\}}, \cdots, a_{n-1}^{\{r\}}\}$,
   choose  at the next step for $a_n^{\{r\}}$ any $a_n \in \SSS$  with  a specified subset $A=A_{n,r}$ of 
   $\{0, 1, ..., n-1\}$ of cardinality $n-r$ that 
   together attain a  global minimum on the right side in:
\begin{equation}
\ivr_{n}^{\{r\}}( \SSS, \pp, \mathbf{a}^{\{r\}}) := \min_{a_n \in \SSS} \left( \min_{A} \sum_{i \in A} \iv_{\pp}(a_n- a_i^{\{r\}})\right),
\end{equation}
where $A$ ranges over all subsets $A \subset \{0, 1,\cdots, n-1\}$ of cardinality $n-r$.
We let $\sA_{n,r} := \{ a_j: j \in A_{n,r}\}$ denote the  $r$-removed 
set at index $n$ associated to the choice $a_n$. (The choice of $a_n$ and  $A_{n,r}$ may not be unique.)

\item[(2)]
A {\it $\pp$-ordering of order $h$ of $\SSS$},  $\mathbf{a}^{h}= (a_i^{h})_{i \ge 0}$,
is one constructed as follows: 
Choose $a_0 \in \SSS$ arbitrarily.
For a given $h \in \NN \cup \{ +\infty\}$, and having chosen $\{ a_0^{h}, a_1^{h}, ..., a_{n-1}^{h} \}$, 
choose  for $a_n^{h}$ any $a_n \in \SSS$ that  achieve a minimum on the right side in 
\begin{equation}
\ivr_{n}^{h}( \SSS, \pp, \mathbf{a}^{h}) := \min_{a_n \in \SSS} \left(\sum_{i=0}^{n-1} \min \left(h, \iv_{\pp}(a_n- a_i^{h})\right)\right).
\end{equation}

\item[(3)]
An {\it $r$-removed $\pp$-ordering of order $h$ of $\SSS$} , $\mathbf{a}^{h, \{r\}} = (a_i^{h,\{r\}})_{i \ge 0}$, is one constructed as
follows: 
  For a given $r\in \NN$ and a given  $h \in \NN\cup\{+ \infty\} $, choose 
 $ a_i \in \SSS$ for $0 \le i \le r$ arbitrarily. Having chosen $\{ a_0^{h, \{r\}}, a_1^{h, \{r\}}, ..., a_{n-1}^{h, \{r\}} \}$, choose 
as $a_n^{h, \{r\}}$ any $a_n \in \SSS$ with  associated set $A=A_{n,r}^h$ of $n-r$ indices drawn from
$\{0, 1, \cdots n-1\}$  that together achieve a minimum on the right side in 
\begin{equation}
\ivr_{n}^{h, \{r\}}( \SSS, \pp, \mathbf{a}^{h, \{r\}}) := 
\min_{a_n \in \SSS} \left( \min_{A} \sum_{i \in A} \min \left(h, \iv_{\pp}(a_n- a_i^{h, \{r\}})\right) \right).
\end{equation}
We let $\sA_{n,r}^h$ denote the associated $r$-removed set  $\sA_{n,r}^h := \{ a_j: j \in A_{n,r}^h \}$
 of cardinality $n-r$,
which depends on the choice of $a_n=a_n^{h, \{r\}}$.
\end{enumerate} 
\end{defn}

The original notion of
$\pp$-ordering for a prime ideal $\pp$ corresponds to the parameter choice $r=0$ and $h=+\infty$.
The definition of constructing the refined $\pp$-orderings  simplifies in this case to
\begin{equation}
\ivr_k(S, \pp, \mathbf{a}) := \min_{a_k \in \SSS} \sum_{i=0}^{k-1} \ordr_{\pp} (a_k - a_i).
\end{equation} 
The additive valuation  product relation
\begin{equation}
\order_{\pp}(a_1 a_2) = \order_{\pp}(a_1) +\order_{\pp}(a_2)
\end{equation}
implies that 
\begin{equation}\label{eq:add-mult-prime}
\ivr_k(S, \pp, \mathbf{a}) = \sum_{i=0}^{k-1} \ordr_{\pp} (a_k - a_i) =
\ordr_{\pp} \left( \prod_{i=0}^{k-1} (a_k - a_i)\right)= \ivr_k^{\ast}(S, \pp, \mathbf{a}),
\end{equation} 
giving agreement with Definition \ref{def:p-ordering}.
Many of Bhargava's applications for $\pp$-orderings use the
 equality \eqref{eq:add-mult-prime} for prime ideals  in an essential way. 
 

Bhargava \cite{Bhar:09} showed the following results about refined $\pp$-orderings, 
rephrased here in terms of $\pp$-exponent sequences.

%
%
\begin{thm}[Bhargava \cite{Bhar:09}]\label{thm:Bhar-main09}
Let $\DD$ be any   Dedekind domain, Let $\SSS$ be any subset of $\DD$, and  
and $\pp$ be any
nonzero prime ideal of $\DD$. 
Then the following hold.
\begin{enumerate}
\item[(1)]
An $r$-removed  $\pp$-ordering for $\SSS$ (with fixed  $r \in \NN$), denoted
${\bf a}^{\{r\}}= \{ a_k^{\{r\}} : k \ge 0 \}$, of   $\SSS$ yields
a well-defined $r$-removed  $\pp$-exponent sequence $\{ \alpha_k^{\{r\} }(S,  \pp, {\bf a} ):\, k \ge 0\}$, which
is independent of the $r$-removed $\pp$-ordering ${\bf a^{\{r\} }}$  of $\SSS$ chosen.
It gives  well-defined $\pp$-exponent sequence  invariants $\alpha_k^{\{r\} }(\SSS,  \pp)$ for each $k \ge 0$.
\item[(2)]
A $\pp$-ordering for $\SSS$ of order $h$ (for $h \in \NN \cup \{ +\infty\}$), denoted  ${\bf a}^{h}= \{ a_k^{h}: k \ge 0\}$ yields
a   $\pp$-exponent sequence of order $h$, $\{ \alpha_k^{h}(\SSS,  \pp, {\bf a} ): k \ge 0\}$, which 
is independent of the particular $\pp$-ordering ${\bf a}$ of  order $h$  of $\SSS$ chosen.
It gives  well-defined  $\pp$-exponent sequence invariants $\alpha_k^{h}(\SSS,  \pp)$ for each $k \ge 0$.
\item[(3)]
An  $r$-removed $\pp$-ordering of $\SSS$ of order $h$ (for $h \in \NN \cup \{ +\infty\}$ and $r \in \NN$)
denoted ${\bf a}^{h, \{r\}}= \{ a_k^{h, \{r\}} : k \ge 0\}$, 
has  a well-defined order $h$, $r$-removed  $\pp$-exponent sequence $\{ \alpha_k^{h, \{r\} }(S,  \pp, {\bf a} ): \,k \ge 0\}$, which
is independent of the particular  order $h$, $r$-removed $\pp$-ordering ${\bf a}^{h, \{r\} }$  of $\SSS$ chosen.
It gives well-defined $\pp$-exponent sequence invariants $\alpha_k^{h, \{r\} }(\SSS,  \pp)$ for each $k \ge 0$.

\end{enumerate}
\end{thm}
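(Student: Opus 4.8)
The plan is to prove the most general statement (3) and to recover (1) and (2) as the specializations $h=+\infty$ and $r=0$. First I would reduce everything to pure valuation combinatorics. Localizing at $\pp$ replaces $\DD$ by the discrete valuation ring $\DD_{\pp}$, and every quantity appearing in Definition~\ref{def:refined-p-ordering} depends only on the pairwise valuations $v_{\pp}(a_i-a_j)$. These satisfy the ultrametric inequality $v_{\pp}(x-z)\ge\min(v_{\pp}(x-y),v_{\pp}(y-z))$, so $S$ carries the structure of an ultrametric space whose closed balls are the residue classes modulo the powers $\pp^m$. For order $h$ I would introduce the truncated function $v_{\pp}^{h}:=\min(h,v_{\pp})$ and check that it is again an ultrametric (now bounded by $h$); all order-$h$ minimizations are then computed with $v_{\pp}^{h}$ in place of $v_{\pp}$. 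The goal becomes showing that each minimal value $\alpha_n^{h,\{r\}}(S,\pp)$ is independent of the admissible ordering chosen.

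For the unremoved case $r=0$ I would prove a cumulative, manifestly order-independent formula. Since the chosen $a_n$ attains the minimum, $\alpha_n^{h}=\sum_{i=0}^{n-1}v_{\pp}^{h}(a_n-a_i)$, so the partial sum telescopes into a symmetric functional of the first $N+1$ chosen points:
\begin{equation*}
\Sigma_N:=\sum_{n=0}^{N}\alpha_n^{h}=\sum_{0\le i<j\le N}v_{\pp}^{h}(a_i-a_j).
\end{equation*}
Using the ball-counting identity $v_{\pp}^{h}(x-y)=\sum_{m=1}^{h}\mathbf{1}[x\equiv y\ (\mathrm{mod}\ \pp^m)]$ I would rewrite $\Sigma_N=\sum_{m=1}^{h}\sum_{B}\binom{c_B^{(m)}}{2}$, where $B$ ranges over residue balls modulo $\pp^m$ and $c_B^{(m)}$ counts the chosen points lying in $B$. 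Because $t\mapsto\binom{t}{2}$ is convex and the balls refine as $m$ grows, $\Sigma_N$ is minimized over all choices of $N+1$ points of $S$ by distributing points as evenly as possible across the occupied balls at every level, and every order-$h$ ordering realizes exactly such a balanced distribution. Hence $\Sigma_N$ equals a global minimum depending only on $(S,\pp,h,N)$, so $\Sigma_N$, and with it each $\alpha_n^{h}=\Sigma_n-\Sigma_{n-1}$, is independent of the ordering. This settles (2); the remaining content of (1) and (3) is the $r$-removed feature.

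I expect the $r$-removed feature to be the main obstacle. For $r\ge1$ the step-$n$ rule discards the $r$ nearest previously placed points, so $\alpha_n^{h,\{r\}}=\min_{a_n}$ of the sum of $v_{\pp}^{h}(a_n-a_i)$ over all but the $r$ largest terms; the partial sums no longer telescope into a configuration functional, since which points are discarded depends on the insertion order. The plan is to recover an order-independent characterization by an exchange argument run by induction on $N$: given two $r$-removed orderings, I would compare their level-$m$ ball-occupancy multisets and show that, at the first level where the configurations differ, an element may be swapped for one in a more balanced ball without changing any $\alpha$-value, using the ultrametric tree to localize the effect of the swap. The natural target is an $r$-discounted ball-counting functional that subtracts, at the deepest occupied levels, the contribution of an $r$-fold free-crowding allowance, so that its level-wise minimization by the same convexity and balancing lemma reproduces the greedy removal; the delicate point is proving that this discounted functional is genuinely symmetric in the configuration, i.e.\ independent of which $r$ points each step removes.

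Finally, (1) is the case $h=+\infty$ and (2) the case $r=0$ of (3), so the combined argument yields all three. Because $S$ may have arbitrary cardinality, I would phrase each optimization so that only finitely many balls at each level $m\le h$ are relevant to the first $N+1$ points, keeping every minimization finite. If the direct exchange argument becomes unwieldy, an alternative robust route is to pass to the completion $\widehat{\DD}_{\pp}$ and carry out the same finite, level-by-level convex balancing over the residue tree, where the optimum is visibly independent of the chosen ordering.
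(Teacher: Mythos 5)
A point of context first: the paper does not prove this theorem at all --- its ``proof'' consists of citing Theorems 3, 4 and 30 of \cite{Bhar:09}, together with the remark that the Dedekind-domain case follows from the local case by localizing at $\pp$, which is the one step your proposal shares. You are therefore attempting a from-scratch proof of Bhargava's theorem, and your argument has two genuine gaps. The first is in part (2). Your telescoping identity $\Sigma_N=\sum_{0\le i<j\le N}v_{\pp}^{h}(a_i-a_j)$ and the ball-counting rewriting $\Sigma_N=\sum_{m=1}^{h}\sum_{B}\binom{c_B^{(m)}}{2}$ are both correct, and the overall logic (if every order-$h$ ordering attains a global minimum depending only on $(S,\pp,h,N)$, then each $\alpha_n^{h}=\Sigma_n-\Sigma_{n-1}$ is an invariant) would work. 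But your proof of that key lemma is false: the minimizer of $\Sigma_N$ is \emph{not} characterized by ``distributing points as evenly as possible across the occupied balls at every level,'' and order-$h$ orderings do not realize balanced distributions. Take $\DD=\ZZ$, $\pp=(2)$, $h=+\infty$, $S=\{0,2,4,6\}\cup\{1,\,1+2^{100}\}$ and $N=3$: the balanced $2$--$2$ split across the two residue classes modulo $2$ costs $1+100=101$, whereas the optimum --- and what every $\pp$-ordering of $S$ actually produces --- is the lopsided $3$--$1$ split of cost $4$. Level-by-level balancing is in general neither achievable nor optimal; the whole difficulty of the lemma is the tension between coarse and deep levels, which convexity alone does not resolve. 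The lemma itself is true (for $h=+\infty$ it is the superfactorial/Vandermonde minimality property, which can be derived from the max-min criterion stated as Theorem \ref{thm:max-min}), but your proposal does not prove it.

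The second gap is in parts (1) and (3), and you flag it yourself: for $r\ge1$ the telescoping-into-a-symmetric-functional mechanism breaks, and your proposed replacement --- an ``$r$-discounted ball-counting functional'' that is symmetric in the configuration --- is never constructed. You write that ``the delicate point is proving that this discounted functional is genuinely symmetric,'' but that delicate point \emph{is} the content of the theorem, not a detail to be filled in later; since (2) is just the specialization $r=0$, all of the new content of (1) and (3) sits in this unproved step. This is also exactly the part where Bhargava's own paper gives only a sketch (a fact the paper records in its proof of this statement), so there is no known argument you can wave at. As it stands, the proposal is a correct reduction (localization, the truncation $v_{\pp}^{h}=\min(h,v_{\pp})$, the telescoping for $r=0$) followed by an incorrect balancing argument in case (2) and an unexecuted plan in cases (1) and (3).
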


\begin{proof}
(1) is stated as Theorem 3 in \cite{Bhar:09}.

(2) is stated as Theorem 4 in \cite{Bhar:09}.
(3) is stated as Theorem 30 in \cite{Bhar:09}.
Bhargava  proves the cases (1) and (2)  for a local ring $\DD$ in \cite{Bhar:09}
and the general Dedekind domain case follows by localization 
at the given prime ideal $\pp$ 
of the given ring $\DD \to \DD_{\pp}=U^{-1}\DD$, 
(where $U= \DD\smallsetminus \pp$), which is an (injective) ring
homomorphism that  preserves the prime ideal property. 
 The paper  gives limited proof details for (3), 
 stating   (on page 29)  that Theorem 30  and the related 
Theorem 31  may be proved by combining all the techniques used in Sections 3 and 4.
\end{proof}


Bhargava used these refined  invariants to define 
 generalized factorials attached to the subset $S$ of  $\RRR$.
For the original Bhargava  \emph{factorial function for $\RRR$ associated to $S$, \, ($r=0, h= +\infty$)}, 
$k!_{S}$, is defined 
to be  the $R$-ideal
\begin{equation}
k!_{S} :=\prod_{\pp} \nu_k (S,\pp) = \prod_{\pp} \pp^{\ivr_{k}^{\ast}(S, \pp)}.
\end{equation}
More generally, for  the factorial with extra order parameter $h$ and $r$-removed parameter $\{r\}$,
here denoted\footnote{Bhargava \cite{Bhar:09} uses
the notation $k!_{S, h}^{\{r\}}$.} 
$k!_{S}^{h, \{r\}}$, we have 
\begin{equation}\label{eq:product3}
k!_{S}^{h, \{r\}} :=\prod_{\pp} \nu_k^{h, \{r\}} (S,\pp) = \prod_{\pp} \pp^{\ivr_{k}^{k, \{r\}}(S, \pp)}.
\end{equation}

For $ \DD$ a Dedekind domain, all but finitely many terms in the product \eqref{eq:product3} are equal to the unit ideal $(1) =\DD$,
 i.e. almost all $\ivr_{k}(S, \pp)=0$ 
(\cite[Lemma 3]{Bhar:97a}). 
Thus for Dedekind domains   Bhargava's theory constructs factorials specified by  their (unique) prime factorizations.

Bhargava  \cite[Theorem 1]{Bhar:09} obtains the following consequence for generalized binomial coefficients.

\begin{thm}\label{thm:binomials} 
For all Dedekind domains $\DD$, all subsets $\SSS$ of $\DD$, and all subsets $\sT$ 
of the set of all nonzero prime  ideals of $\DD$,  the  integral ideal $[k] !_{\SSS, \sT}^{h, \{r\}} [\ell] !_{\SSS, \sT}^{h, \{r\}}$
divides the integral ideal $[k+\ell] !_{\SSS, \sT}^{h, \{r\}}$ for all $k, \ell \ge0$.
\end{thm}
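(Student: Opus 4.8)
The plan is to reduce the divisibility of ideals to a family of numerical inequalities, one for each prime, and then prove each such inequality after localizing.

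First I would invoke unique factorization of ideals in the Dedekind domain $\DD$. By definition $[k]!_{S,\sT}^{h,\{r\}} = \prod_{\pp \in \sT}\pp^{\alpha_k^{h,\{r\}}(S,\pp)}$, and since only finitely many exponents are nonzero, each of $[k]!_{S,\sT}^{h,\{r\}}[\ell]!_{S,\sT}^{h,\{r\}}$ and $[k+\ell]!_{S,\sT}^{h,\{r\}}$ is a genuine integral ideal whose prime factorization is supported on $\sT$. Because one integral ideal divides another in a Dedekind domain exactly when this holds primewise, the asserted divisibility is equivalent to the family of inequalities
\begin{equation*}
\alpha_k^{h,\{r\}}(S,\pp) + \alpha_\ell^{h,\{r\}}(S,\pp) \le \alpha_{k+\ell}^{h,\{r\}}(S,\pp), \qquad \pp \in \sT,\ k,\ell \ge 0.
\end{equation*}
This step disposes of all the generality in $\sT$ and in $\DD$; it remains to treat a single fixed prime $\pp$.

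Next I would fix $\pp$ and pass to the localization $\DD \to \DD_\pp$, a discrete valuation ring. As noted in the proof of Theorem \ref{thm:Bhar-main09}, this is an injective ring homomorphism preserving the prime and carrying $\iv_\pp$ to the valuation of $\DD_\pp$; since the defining minimizations only see the integers $\iv_\pp(a_n - a_i)$, the exponents $\alpha_k^{h,\{r\}}(S,\pp)$ are unchanged by localization, so it suffices to treat a DVR. There I would use well-definedness (Theorem \ref{thm:Bhar-main09}) to fix a single $r$-removed $\pp$-ordering of order $h$, $\mathbf{a}=(a_i)_{i\ge0}$, and compute all three exponents from it. Writing each term through its level decomposition $\min(h,\iv_\pp(a_n-a_i))=\sum_{j=1}^{h}\mathbf{1}[\iv_\pp(a_n-a_i)\ge j]$ reduces the superadditivity, level by level in $j$, to controlling how many earlier elements collide with $a_n$ modulo $\pp^j$ after discarding the $r$ closest ones. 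The greedy minimization forces these collision counts to be as evenly distributed as the structure of $S$ modulo $\pp^j$ allows, and the target per-level estimate is the Kummer/Legendre-type bound $\lfloor(k+\ell)/m\rfloor \ge \lfloor k/m\rfloor + \lfloor \ell/m\rfloor$.

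The hard part is precisely this per-level combinatorial estimate in the refined setting, which is the content Bhargava establishes in \cite{Bhar:09}. One cannot simply split the fixed ordering $\mathbf{a}$ into a ``first $k$'' block and a ``next $\ell$'' block, since a shifted tail of a $\pp$-ordering need not be a $\pp$-ordering; moreover the retained index set $A_{n,r}^h$ of cardinality $n-r$ depends on the chosen $a_n$ and is not nested in $n$, so the $r$-removal (which discards the $r$ largest valuation terms) couples the levels $j$ together rather than acting levelwise. I would therefore either quote Bhargava's Theorem~1 of \cite{Bhar:09} for the localized case directly --- legitimate, since localization has reduced us exactly to his prime-ideal hypothesis --- or reprove it by the balanced-distribution analysis of his Sections~3--4, tracking the removed indices and checking that evenness of the distribution survives both the capping by $h$ and the removal of $r$ terms. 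Given that the statement is attributed to Bhargava and that Theorem \ref{thm:Bhar-main09} is available, the cleanest route for the paper is the reduction of the first two paragraphs followed by a citation of the local case.
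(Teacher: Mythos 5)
Your proposal is correct and follows essentially the same route as the paper: the paper likewise reduces the divisibility to the primewise superadditivity inequality $\ivr_{k+\ell}^{h,\{r\}}(\SSS,\pp) \ge \ivr_{k}^{h,\{r\}}(\SSS,\pp) + \ivr_{\ell}^{h,\{r\}}(\SSS,\pp)$, observes that arbitrary $\sT \subseteq \PP(\DD)$ costs nothing since the inequality is proved one prime at a time, and cites Bhargava's Theorem~1 of \cite{Bhar:09} for that inequality. Your explicit localization step and the level-decomposition sketch are elaborations of what Bhargava's cited proof does internally, and your conclusion that the cleanest route is reduction plus citation is exactly the paper's choice.
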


Bhargava proved this result for the set $\sT= \PP(\DD)$ of all nonzero prime ideals of 
a Dedekind domain $\DD$
as a special case of \cite[Theorem 1]{Bhar:09}. His  proof applies without change  allowing 
any subset $\sT$ of $\PP(\DD)$ in place of $\PP(\DD)$.
In terms of the $\pp$-exponent sequence invariants, the result follows from the inequality,
\begin{equation}\label{eqn:additive-invariant}
\ivr_{k + \ell}^{h, \{r\}}(\SSS, \pp) \ge \ivr_{k}^{h, \{r\}}(\SSS, \pp) + \ivr_{\ell}^{h, \{r\} }(\SSS, \pp),
\end{equation}
proved for each prime ideal $\pp$ separately.

%
%

\section{Main results}\label{sec:2}

The main results  extend Bhargava's theory of $\pp$-orderings for prime ideals $\pp$ in a Dedekind domain $\DD$
to treat $\bb$-orderings for all 
  ideals $\bb$ in  a Dedekind domain $\DD$.
 
 In what follows  $R$ denotes a general commutative ring with unit,  $\sI(R)$ denotes the set of all
 ideals of   $R$, and $\ddR$ denotes a field.
 The proofs treat $\bb$-orderings for all nonzero proper ideals, those in $\sI^{\ast\ast}(D)= D\smallsetminus \{ (0), D \}.$
 We treat separately   $\bb$-orderings for  the zero ideal $\bb=(0)$ and the full ring $\bb=(1)=\DD$.
 We use the convention that for the zero ideal 
\begin{equation}
\nu_k(S,  (0) ) = \begin{cases}  \DD  & \quad \mbox{ $k \le |\SSS|-1$} \\
(0) & \quad \mbox{otherwise}. 
\end{cases}
\end{equation} 

%
%

\subsection{Generalization of  $\bb$-orderings to arbitrary ideals $\bb$.}

We  first introduce general sequences of elements of a nonempty subset  $\SSS$ of 
a ring $R$, then specialize to $\bb$-orderings..

%
%
\begin{definition}\label{def:test-seq}
Let $\SSS$ be a nonempty subset of a commutative ring $R$.
An {\em $\SSS$-test sequence }  $\mathbf{a}=\left(a_i\right)_{i=0}^\infty$ is any infinite 
sequence of elements  of $\SSS$, allowing repetitions of elements.
\end{definition} 

This notion 
is different than the notion of {\em test set},
used in  Cahen and Chabert \cite{CC:18}, in connection with universal sets in the theory
of integer-valued polynomials, (see \cite{CC:97} for background.)

We define  ``plain" 
 $\bb$-orderings of $\SSS$ as a special subclass of $\SSS$-test sequences.
%
%
\begin{definition}\label{def:b-orderings}
Let $R$ be a commutative ring with unit, and $\bb$ a nonzero  proper ideal of $R$.
 Let $S$ be a nonempty subset of  $R$. We call a $\SSS$-test sequence $\mathbf{a}=\left(a_i\right)_{i=0}^\infty$ 
 of elements of $S$ a  \emph{ 
 $\bb$-ordering of $\SSS$} if for all $i=1,2,3,\dots$,
\begin{equation}\label{eqn:b-orderings}
\sum_{j=0}^{i-1}\order_{\bb}\left(a_i-a_j\right)=\min_{s\in \SSS}\,\sum_{j=0}^{i-1}\order_{\bb}\left(s-a_j\right),
\end{equation}
where $\order_{\bb}(a)$ is defined for $a \in R$ by
\begin{equation}\label{eqn:order-b}
\order_{\bb}(a):=\sup\left\{k\in\mathbb{N}: \,  aR \subseteq \bb^k \right\} \in \NN \cup \{ +\infty\}.
\end{equation}
For a Dedekind domain $\DD$ the condition $a \DD \subseteq \bb^k$ is
equivalent to $\bb^k$ divides $a\DD$.
\end{definition}

Given any initial value $a_0\in \SSS$, one can find a  $\bb$-ordering having that initial value, using the recurrence \eqref{eqn:b-orderings}
for later values. There will be more than one 
 $\bb$-ordering of $\SSS$, unless $\SSS$ is a singleton.

In Definition \ref{def:b-orderings} for the case $R$ is a Dedekind domain $\DD$ we note that
if $\bb=\pp$ is a prime ideal, then $\order_{\pp}$ is an additive valuation on $R$.
For prime ideals $\pp$ of $\DD$,  the terms on the right side  of \eqref{eqn:b-orderings}
in  Definition \ref{def:b-orderings} agree with the right side of \eqref{eqn:p-ordering0}  in Definition \ref{def:p-ordering}, due to the
valuation $\order_{\pp}$ identity \eqref{eq:add-mult-prime}. 
However when  $\bb$ is a composite ideal, $\order_{\bb}$ is {\em not} an additive valuation. 
It  satisfies instead that for all $a_1, a_2 \in \DD$, 
$\order_{\bb}(a_1+ a_2) \ge \min( \order_{\bb}(a_1), \order_{\bb}(a_2))$
and 
\begin{equation}\label{eqn:non-valuation}
\order_{\bb}(a_1 a_2) \ge \order_{\bb}(a_1) +\order_{\bb}(a_2).
\end{equation}
where strict inequality may occur. 
(For example take $\DD=\ZZ$, $\bb=(6)$ and $a_1=2$, $a_2=3$; the right side is $0$ and the left side is $1$.)

The definitions of  refined $\bb$-orderings  
replace $\order_{\pp}$ with $\order_{\bb}$
in Definition \ref{def:refined-p-ordering}.

\begin{defn}[Refined $\bb$-orderings]\label{def:refined-b-ordering}
Given a set $\SSS$ of a Dedekind domain $\DD$ and an arbitrary ideal $\bb$ of $\DD$.
\begin{enumerate}
\item[(1)]
 An {\it $r$-removed $\bb$-ordering of $\SSS$},  $\mathbf{a}^{\{r\}} =(a_i^{\{r\}})_{i \ge 0}$, is one
 constructed as follows:

For fixed $r \in \NN$ choose $ a_i^{ \{r\}} \in \SSS$ for $0 \le i \le r$ arbitrarily. 
For larger values, having chosen $\{a_0^{\{r\}}, a_1^{\{r\}}, \cdots, a_{n-1}^{\{r\}}\}$,
   choose  at the next step for $a_n^{\{r\}}$ any $a_n \in \SSS$  with  a specified subset $A=A_{n,r}$ of 
   $\{0, 1, ..., n-1\}$ of cardinality $n-r$ that 
   together attain a  global minimum on the right side in:
\begin{equation}
\ivr_{n}^{\{r\}}( \SSS, \bb, \mathbf{a}^{\{r\}}) := \min_{a_n \in \SSS} \left( \min_{A} \sum_{i \in A} \iv_{\bb}(a_n- a_i^{\{r\}})\right),
\end{equation}
where $A$ ranges over all subsets $A \subset \{0, 1,\cdots, n-1\}$ of cardinality $n-r$.
We let $\sA_{n,r} := \{ a_j: j \in A_{n,r}\}$ denote the  $r$-removed 
set at index $n$ associated to the choice $a_n$. (The choice of $a_n$ and  $A_{n,r}$ may not be unique.)

\item[(2)]
A {\it $\bb$-ordering of order $h$ of $\SSS$},  $\mathbf{a}^{h}= (a_i^{h})_{i \ge 0}$,
is one constructed as follows: 
Choose $a_0 \in \SSS$ arbitrarily.
For a given $h \in \NN \cup \{ +\infty\}$, and having chosen $\{ a_0^{h}, a_1^{h}, ..., a_{n-1}^{h} \}$, 
choose  for $a_n^{h}$ any $a_n \in \SSS$ that  achieve a minimum on the right side in 
\begin{equation}
\ivr_{n}^{h}( \SSS, \bb, \mathbf{a}^{h}) := \min_{a_n \in \SSS} \left(\sum_{i=0}^{n-1} \min \left(h, \iv_{\bb}(a_n- a_i^{h})\right)\right).
\end{equation}

\item[(3)]
An {\it $r$-removed $\bb$-ordering of order $h$ of $\SSS$} , $\mathbf{a}^{h, \{r\}} = (a_i^{h,\{r\}})_{i \ge 0}$, is one constructed as
follows: 
  For a given $r\in \NN$ and a given  $h \in \NN\cup\{+ \infty\} $, choose 
 $ a_i \in \SSS$ for $0 \le i \le r$ arbitrarily. Having chosen $\{ a_0^{h, \{r\}}, a_1^{h, \{r\}}, ..., a_{n-1}^{h, \{r\}} \}$, choose 
as $a_n^{h, \{r\}}$ any $a_n \in \SSS$ with  associated set $A=A_{n,r}^h$ of $n-r$ indices drawn from
$\{0, 1, \cdots n-1\}$  that together achieve a minimum on the right side in 
\begin{equation}
\ivr_{n}^{h, \{r\}}( \SSS, \bb, \mathbf{a}^{h, \{r\}}) := 
\min_{a_n \in \SSS} \left( \min_{A} \sum_{i \in A} \min \left(h, \iv_{\bb}(a_n- a_i^{h, \{r\}})\right) \right),
\end{equation}
We let $\sA_{n,r}^h$ denote the associated $r$-removed set  $\sA_{n,r}^h := \{ a_j: j \in A_{n,r}^h \}$
 of cardinality $n-r$,
which depends on the choice of $a_n=a_n^{h, \{r\}}$.
\end{enumerate} 
\end{defn}
The case $r=0$ and $h = +\infty$ coincides with  the notion of
$\bb$-ordering  in Definition \ref{def:b-orderings}  for arbitrary ideals $\bb$,
 which, in this context, we 
call {\em ``plain" $\bb$-ordering}.

To each $\bb$-test sequence of $S$, we can associate $\bb$-exponent sequences 
corresponding to these measures  of refined $\bb$-orderings.

%
%

\begin{defn}[Refined $\bb$-exponent sequences for arbitrary $\SSS$-test sequences]\label{def:associated}
Let $\SSS$ be a nonempty subset of a commutative ring $R$,
 let $\bb$ be a nonzero proper ideal in $R$, and let $\mathbf{a}=(a_n)_{n=0}^{\infty}$
be any fixed  $\SSS$-test sequence. 
\begin{enumerate}
\item[(1)]
Given $\mathbf{a}$
 for fixed $n$ 
the   {\em  $r$-removed $\bb$-exponent sequence of ${\bf{a}}$}  is 
\begin{equation}
\ivr_n^{\{r\}}(\SSS, \bb, \mathbf{a}) := \min_{A} \sum_{i \in A} \iv_{\bb}(a_n - a_i),
\end{equation}
where the minimum is taken over  all $A \subset \{0,1, \cdots , n-1\}$ of cardinality $n-r$.
We let $A=A_{n,r}$ be a choice of minimizer,
and  then  let $\sA_{n,r}$ denote the associated minimizing  subset of $\{a_0, a_1, \cdots, a_{n-1}\}$ of cardinality $\max(n-r,0)$.
The choice of  $A_{n,r}$ need not be unique. However  
the values for each fixed $n$ of  $\ivr_n^{\{r\}}(\SSS, \bb, \mathbf{a})$ are well-defined.
We obtain a 
associated minimizing sequence $( A_{n, r})_{n=0}^{\infty}$.
\item[(2)]
The {\em  $\bb$-exponent sequence of order $h$ of ${\mathbf a}$ }
  is 
\begin{equation}
\ivr_n^{h}(\SSS, \bb, \mathbf{a}) := \sum_{i=0}^{n-1} \min \left(h, \iv_{\bb}(a_n- a_i)\right).
\end{equation}
\item[(3)]
 Given $\mathbf{a}$ the
  {\em  $r$-removed $\bb$-exponent sequence of order $h$ of ${\mathbf a}$}  is
\begin{equation}
\ivr_n^{h, \{r\}}(\SSS, \bb, \mathbf{a}) :=\min_{A} \sum_{i \in A} \min \left(h, \iv_{\bb}(a_n- a_i)\right),
\end{equation}
where the minimum is taken over all $A \subset \{0,1, \cdots , n-1\}$ of cardinality $n-r$.
and  we then let $\sA_{n,r}^h$ denote the associated minimizing subset of $\{a_0, a_1, \cdots, a_{n-1}\}$ of cardinality $n-r$.
We obtain an associated minimizing sequence $( A_{n, r}^h)_{n=0}^{\infty}$.
\end{enumerate} 
\end{defn}

%
%
\begin{definition}[$\bb$ and $\sI$-Sequenceability]\label{def:23}
 Given a  commutative ring $R$ and a   nonzero proper
ideal $\bb$, we say that $R$ is  {\em  $\bb$-sequenceable} 
(for a given type of refined $\bb$-ordering) if for every  nonempty subset $S \subset R$, any
two 
$\bb$-orderings $\mathbf{a}_1^{h, \{r\}}$, $\mathbf{a}_2^{h, \{r\}}$ for $\SSS$
have identical associated $\bb$-exponent sequences:  
$\ivr_i(S,\bb,\mathbf{a}_1^{h, \{r\}})= \ivr_i(S,\bb,\mathbf{a}_2^{h, \{r\}})$
for all $i \ge 0$. If so, we  set
\begin{equation}\label{def:b-sequence-ind}
 \ivr_i^{h, \{r\}}(S, \bb) := \ivr_i^{h, \{r\}}(S, \bb, \mathbf{a}) 
 \end{equation}
 for any 
 $\bb$-ordering
 $\mathbf{a}$ and term it 
the {\em  $\bb$-exponent sequence of $\SSS$} for the ring $R$.

(2) We say that the ring  $R$ is  {\em $\II$-sequenceable}
(for a given type of refined $\bb$-ordering)  if it is  $\bb$-sequenceable for all 
nonzero proper ideals $\bb$ of $R$. 
 \end{definition}

The  main result of this paper is  the
well-definedness of Bhargava's refined ordering invariants ($r$-removed of order $h$)
for all ideals $\bb$ of  a Dedekind domain $\DD$, including $\bb =(0)$ and $\bb=(1)$.
 
%
%
\begin{thm}[Well-definedness of the refined $\bb$-exponent sequences of $S$ in Dedekind domains]\label{thm:NEW-MAIN}
Let $\DD$ be any   Dedekind domain, let $\SSS$ be any subset of $\DD$, and  
and $\bb$ be  any
ideal of $\DD$. 
Then the following hold.
\begin{enumerate}
\item[(1)]
An $r$-removed  $\bb$-ordering for $\SSS$ ( with fixed  $r \in \NN$), denoted
${\bf a}^{\{r\}}= \{ a_k^{\{r\}} : k \ge 0 \}$, of   $\SSS$ has
an  $r$-removed  $\bb$-exponent sequence $\{ \alpha_k^{\{r\} }(S,  \bb, {\bf a} ):\, k \ge 0\}$, which
is independent of the $r$-removed $\bb$-ordering ${\bf a^{\{r\} }}$  
chosen.
It gives  well-defined $\bb$-exponent sequence invariants $\alpha_k^{ \{r\}}(\SSS, \bb)$ for each $k \ge 0$.
\item[(2)]
A $\bb$-ordering for $\SSS$ of order $h$ (for $h \in \NN \cup \{ +\infty\}$), 
denoted  ${\bf a}^{h}= \{ a_k^{h}: k \ge 0\}$, 
has an 
order $h$ $\bb$-exponent sequence $\{ \alpha_k^{h}(S,  \bb, {\bf a} ): k \ge 0\}$, which
is independent of the particular $\bb$-ordering ${\bf a}^{h}$ of  order $h$  
 chosen.
It gives well-defined $\bb$-exponent sequence invariants $\alpha_k^{(h)}(S, \bb)$ for each $k \ge 0$.
\item[(3)]
An  $r$-removed $\bb$-ordering of $\SSS$ of order $h$ (for $h \in \NN \cup \{ +\infty\}$ and $r \in \NN$),
denoted ${\bf a}^{h, \{r\}}= \{ a_k^{h,\{r\}} : k \ge 0\}$,
has an 
order $h$, $r$-removed  $\bb$-exponent sequence $\{ \alpha_k^{h, \{r\} }(S,  \bb, {\bf a} ): \,k \ge 0\}$, which
is independent of the particular  order $h$, $r$-removed $\bb$-ordering ${\bf a}^{h, \{r\} }$   chosen.
It gives  well-defined $\bb$-exponent sequence  invariants $\alpha_k^{h, \{r\}}(\SSS, \bb)$ for each $k \ge 0$.
\end{enumerate}
\end{thm}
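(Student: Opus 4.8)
The plan is to reduce the well-definedness for a general ideal $\bb$ to the already-known prime (discrete valuation) case, by transporting everything to the power series ring $\KKK[[t]]$ with maximal ideal $\mm = t\KKK[[t]]$, where Theorem~\ref{thm:T-invariant} applies verbatim. The boundary cases $\bb=(0)$ and $\bb=(1)$ are disposed of directly: for $\bb=(1)$ every $\order_{\bb}(a)=+\infty$, so the exponent sequences are trivial, while for $\bb=(0)$ one uses the stated convention. So I fix a nonzero proper ideal $\bb$ of $\DD$ for the rest of the argument and observe that the refined exponent sequences $\ivr_n^{\{r\}}$, $\ivr_n^{h}$, $\ivr_n^{h,\{r\}}$ attached to an $\SSS$-test sequence $\mathbf{a}$ depend on $\mathbf{a}$ \emph{only} through the pairwise values $\order_{\bb}(a_n - a_i)$. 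Consequently it suffices to produce a single map preserving all of these values at once.

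Concretely, I would construct, for the fixed $\bb$ but \emph{independently of $\SSS$}, a map $\phi \colon \DD \to \KKK[[t]]$ satisfying
\begin{equation*}
\order_{\bb}(a - a') = \order_{\mm}\big(\phi(a) - \phi(a')\big) \qquad \text{for all } a, a' \in \DD.
\end{equation*}
The key observation is that $\order_{\bb}(a-a') \ge k$ is equivalent to $a \equiv a' \pmod{\bb^k}$, so the ultrametric induced by $\order_{\bb}$ is exactly that of the tower $\DD/\bb^k$ with its natural projections; by the Krull intersection theorem $\bigcap_k \bb^k = (0)$, so $\order_{\bb}(a-a')=+\infty$ iff $a=a'$. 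In a Dedekind domain $\bb^k/\bb^{k+1} \cong \DD/\bb$, so this tower is a regular rooted tree in which every node has exactly $N := |\DD/\bb|$ children, while the tower $\KKK[t]/(t^k)$ is regular with branching $|\KKK|$. Choosing $\KKK$ with $|\KKK| \ge N$, I would build a compatible system of injections $\iota_k \colon \DD/\bb^k \hookrightarrow \KKK[t]/(t^k)$ by induction on $k$, at each level injecting the $N$-element fiber over a node into the $|\KKK|$-element fiber over its image; passing to the inverse limit $\varprojlim \KKK[t]/(t^k) = \KKK[[t]]$ defines $\phi$. By construction $\phi(a) \equiv \phi(a') \pmod{t^k} \iff a \equiv a' \pmod{\bb^k}$, which is precisely the displayed order-preservation. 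Note $\phi$ is injective but respects neither ring operation, consistent with $\order_{\bb}$ not being additive for composite $\bb$.

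With $\phi$ in hand the transfer is routine. Set $U := \phi(\SSS)$; since $\phi$ is injective it restricts to a bijection $\SSS \to U$, so minimizing any of the defining expressions over $s \in \SSS$ is the same as minimizing over $u \in U$. Because $\phi$ preserves each value $\order_{\bb}(a_n-a_i) = \order_{\mm}(\phi(a_n)-\phi(a_i))$, a $\bb$-ordering of $\SSS$ of any of the three refined types maps to an $\mm$-ordering of $U$ of the same type, with identical refined exponent sequences. Applying Theorem~\ref{thm:T-invariant} to $U \subseteq \KKK[[t]]$ (where $\mm$ is prime, so Bhargava's theory applies), the refined $\mm$-exponent sequences of $U$ are independent of the chosen $\mm$-ordering. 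Hence two $\bb$-orderings $\mathbf{a}_1, \mathbf{a}_2$ of $\SSS$ push forward to $\mm$-orderings of $U$ with equal exponent sequences, forcing $\ivr_i(\SSS,\bb,\mathbf{a}_1) = \ivr_i(\SSS,\bb,\mathbf{a}_2)$ in each of the cases (1), (2), (3) uniformly.

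The main obstacle is the construction of $\phi$: the subtlety is that one map must work \emph{simultaneously for all} subsets $\SSS$, i.e. it must be a global isometry of the $\order_{\bb}$-ultrametric into the $t$-adic one rather than a homomorphism (indeed no homomorphism can work, since $\order_{\bb}$ fails to be a valuation). The crux is therefore the regular-tree structure supplied by the isomorphism $\bb^k/\bb^{k+1} \cong \DD/\bb$ together with the level-by-level injective extension; carrying out the latter for an arbitrary, possibly uncountable, Dedekind domain requires the Axiom of Choice, both to select the fiberwise injections and to assemble them across levels. A secondary point requiring care is ensuring $|\KKK| \ge |\DD/\bb|$ even when the residue field $\DD/\bb$ is infinite, which merely amounts to taking $\KKK$ of sufficiently large cardinality.
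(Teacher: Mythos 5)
Your proposal is correct, and its overall architecture coincides with the paper's: fix a nonzero proper ideal $\bb$, construct a congruence-preserving map into a power series ring over a field (the paper's Property $\PC$), observe that such a map carries refined $\bb$-orderings of $\SSS$ to refined $t$-orderings of the image set with identical exponent sequences, and then invoke Bhargava's well-definedness for the prime $t\KKK[[t]]$ (Theorem \ref{thm:T-invariant}) as a black box, treating $\bb=(0)$ and $\bb=(1)$ separately. Where you genuinely diverge is in how the map is built. The paper embeds $\DD$ into Claborn's ring $\DD(X)=\DD[X]_{S_2}$, a principal ideal domain, so that the extended ideal $\bb\DD(X)=\beta\DD(X)$ becomes principal; it then defines $\varphi_{\bb}$ via the base-$\beta$ digit expansion (Lemmas \ref{lem:Ded-digit-exp} and \ref{lem:Ded-digit-exp2}), the delicate point being the contraction identity $\aaa\DD(X)\cap\DD=\aaa$ (Proposition \ref{prop:Claborn2}), which guarantees that congruences modulo $\bb^k$ in $\DD$ match congruences modulo $(\bb\DD(X))^k$. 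You instead build the map as a pure ultrametric isometry: from $\bb^k/\bb^{k+1}\cong\DD/\bb$ (valid in a Dedekind domain, e.g. by localizing at the primes dividing $\bb$), both towers $\left(\DD/\bb^k\right)_{k\ge1}$ and $\left(\KKK[t]/(t^k)\right)_{k\ge1}$ are regular rooted trees with branching $|\DD/\bb|$ and $|\KKK|$ respectively, so once $|\KKK|\ge|\DD/\bb|$ you get compatible level-by-level injections $\DD/\bb^k\hookrightarrow\KKK[t]/(t^k)$ by (transfinite) choice, and pass to $\varprojlim$; the Krull intersection theorem plays the role of the paper's Lemma \ref{lem:Ded-congruence}. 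Both constructions deliver Property $\PC$ and the identical transfer step (the paper's Proposition \ref{prop:main-propertyC}). Your route avoids Claborn's theorem and all of the content-ideal machinery, and it needs only $\bigcap_k\bb^k=(0)$ together with a cardinality bound on the graded quotients, so it would extend verbatim to more general pairs $(R,\bb)$ with those two properties; what it gives up is any algebraic relationship between $\KKK$ and $\DD$, whereas the paper's target ring $\DD(X)[[t]]$ stays tied to $\DD$ (a structure the paper exploits elsewhere, e.g. Remark \ref{rem:610}) and is explicit modulo the choice of a digit set.

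One small gap: your disposal of $\bb=(0)$ is too quick. Saying ``one uses the stated convention'' fixes what the answer should be but does not show that two refined $(0)$-orderings of a finite set $\SSS$ produce the same $(h,\{r\})$-exponent sequence; the paper has to compute $\ivr_n^{h,\{r\}}(\SSS,(0))$ explicitly (culminating in \eqref{eqn:general-r-h-zero}) before asserting independence of the ordering. This boundary case is easily repaired along those lines, but as written it is unproved in your argument.
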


 The   specialization of these $\bb$-exponent sequences to the case of  prime ideals $\pp$ agrees with those in 
 Bhargava's  Definition \ref{def:refined-p-ordering}.
 We state the assertions (1)-(3) separately because they rely on well-definedness  results of Bhargava for
 generalized  $\pp$-orderings 
 in local rings given
 in Theorem \ref{thm:T-invariant}.
 Theorem \ref{thm:well-definedness0} for ``plain" $\bb$-orderings is the special case $r=0$ of Theorem \ref{thm:NEW-MAIN} (1); it has
 a  direct proof  independent of Bhargava's results via Theorem \ref{thm:max-min}.


We prove  Theorem 
  \ref{thm:NEW-MAIN} in Section \ref{sec:5}.
 The main content of the  proof is  a mechanism giving, for all nonzero proper ideals,  a reduction to a   case covered by
Bhargava's theorem \ref{thm:Bhar-main09},  
which is $\pp$-sequenceability
 for  the prime ideal $\pp = t \KKK[[t]]$ of the
 formal power-series ring over a field $\KKK[[t]]$,
 for the different generalized $\pp$-sequences,
 stated as Theorem \ref{thm:T-invariant}. We apply (Bhargava's) Theorem \ref{thm:T-invariant} as a black box.
 The remaining cases $\bb= (0)$ and $\bb= (1)$ covered by Theorem \ref{thm:NEW-MAIN} are given separate proofs.
 This reduction proceeds by  embedding 
 a  given Dedekind domain $\DD$ into a larger Dedekind domain $\DD^{'}$,
 and giving an injective  map  $\varphi_{\bb} (\cdot)$ of $\DD$ into the unique factorization domain $\DD^{'}[[t]]$,
 which need not be a Dedekind domain, but is  a subring 
 of  $\KKK^{'}[[t]]$, where $\KKK^{'}$ is the quotient field of $\DD'$, which
 is a Dedekind domain.
 The embedding maps  $\varphi_{\bb}(\cdot)$   have the feature   
 that they are not required to  preserve either of the ring operations
 (ring addition and ring multiplication),  but the sequenceability 
property  is transferred.  

%
%

\subsection{Generalized factorial ideals}

Using  $\bb$-sequences, we define generalizations  of Bhargava's notion
of factorial ideals  
for Dedekind domains, as treated  in  \cite{Bhar:98}, \cite{Bhar:00}.

%
%
\begin{definition}\label{definition:factorials-S-T}
 Let $S$ be a nonempty subset of the  $\II$-sequenceable  ring $\RRR$. Let $\T\subseteq\sI(\DD)$. 
For $k=0,1,2,\dots$, the \emph{$k$-th generalized factorial  ideal 
 associated to $S$ and $\T$}, denoted $[k]!_{S,\T}^{h, \{r\}}$, is defined by
\begin{equation}\label{eqn:factorials-S-T}
[k]!_{S,\T}^{h, \{r\}}:=\prod_{\bb \in\T}\bb^{\ivr_k^{h, \{r\}}(S,\bb)}.
\end{equation}
\end{definition}

We note that:
\begin{enumerate}
\item[(1)] If $\T=\varnothing$, then the product on the right side of \eqref{def:factorials-S-T} is empty; so $[k]!_{S,\varnothing}^{h, \{r\}}=1$.
\item[(2)] If $\T\neq\varnothing$, then $[k]!_{S,\T}^{h, \{r\}}$ is a nonzero ideal
 if and only if $k<\vert S\vert- r$.
\item[(3)]
We have  $\ivr_0^{h, \{r\}}(S,\bb)=0$
 for all $\bb\in\sI(\DD)$; so 
$[0]!_{S,\T}^{h, \{r\}}=(1)$ is the unit ideal.
\item[(4)] The unit ideal $\bb= (1) \in \T$ plays no role in generalized factorial ideals, as its associated factors in \eqref{eqn:factorials-S-T} are always $(1)$,
for all $k \ge 0$.
\item[(5)] The 
zero ideal $\bb=(0) \in \T$ plays no role when $\SSS$ is an infinite set.
In  that case for $k\ge 0$  all its associated  factors in \eqref{eqn:factorials-S-T} are $(1)$. ( All $\ivr_k^{h, \{r\}}(S,(0))=0$ and  $(0)^{0}=(1)$.)
It does play  a role when $\vert \SSS\vert$ is finite.
In that case  all generalized factorials
become $(0)$ for  $k \ge (r+1)\vert \SSS \vert$,  for  any $h \ge 1$, see the proof of Theorem \ref{thm:NEW-MAIN}. 
 (We have $\ivr_k^{h, \{r\}}(S,(0))\ge 1$ for $k \ge (r+1)\vert \SSS \vert$ because duplicate elements occur.)
\end{enumerate}

We show 
 that generalized factorials are well-defined by their product formula \eqref{eqn:factorials-S-T},
 see in Lemma \ref{lem:60}.
 We may restrict $\T$ to the set of  proper ideals, and we also require  $k$  to satisfy $k < (r+1)\vert \SSS \vert$
 if  $(0) \in \T$.

It is known for plain $\pp$-orderings that $[1]!_{S, \T}=(1)$ is the unit ideal as long as $S$ is not contained in a single congruence class modulo $\bb$ for every $\bb\in\T$.
It need not always be the unit ideal.

%
%
\begin{example}\label{exam:39}
For  Dedekind domains  $\DD$,   the choice  $\T=\PP_{\DD} :=\Spec(\DD) \smallsetminus \{ (0)\}$  
recovers  Bhargava's generalized factorial ideals \cite[Section 2]{Bhar:98}. 
Further specialization to $\DD= \ZZ$ contains the usual factorial function as the special case $(S,\T)=(\mathbb{Z},\PP)$:
$$
[k]!_{\mathbb{Z},\PP}=(k!), 
$$
on identifying the factorial ideal $(k!)$ with its positive generator $k! \in \NN$ (\cite{Bhar:00}).
\end{example}

Generalized factorial ideals satisfy inclusion relations in the index $\sT$.
%
%
\begin{prop}[Factorial Inclusion and divisibility properties]\label{prop:factorial-ordering}
Let $S$ be a nonempty subset of the Dedekind domain $\DD$. Let $\T_1\subseteq\T_2\subseteq\sI (\DD)$. Then for integers $0\le k<\vert S\vert$,
$$
[k]!_{S,\T_1}^{h, \{r\}}\quad\text{divides}\quad[k]!_{S,\T_2}^{h, \{r\}}.
$$
Equivalently
$$
[k]!_{S,\T_2}^{h, \{r\}} \quad\text{is contained in }\quad[k]!_{S,\T_1}^{h, \{r\}}.
$$
\end{prop}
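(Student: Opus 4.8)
The plan is to reduce the statement to the evident \emph{multiplicativity} of the factorial product over the index set $\T$, followed by an appeal to the ``to contain is to divide'' principle for ideals of a Dedekind domain. First I would use the defining product \eqref{eqn:factorials-S-T} together with the inclusion $\T_1 \subseteq \T_2$ to split
\begin{equation*}
[k]!_{S,\T_2}^{h, \{r\}} = \prod_{\bb \in \T_2} \bb^{\ivr_k^{h, \{r\}}(S,\bb)} = \Big(\prod_{\bb \in \T_1} \bb^{\ivr_k^{h, \{r\}}(S,\bb)}\Big)\Big(\prod_{\bb \in \T_2 \smallsetminus \T_1} \bb^{\ivr_k^{h, \{r\}}(S,\bb)}\Big) = [k]!_{S,\T_1}^{h, \{r\}}\cdot \cc,
\end{equation*}
where $\cc := \prod_{\bb \in \T_2 \smallsetminus \T_1} \bb^{\ivr_k^{h, \{r\}}(S,\bb)}$. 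By Theorem \ref{thm:NEW-MAIN} each exponent $\ivr_k^{h, \{r\}}(S,\bb)$ is a well-defined invariant, so all three products are unambiguous; the range $0 \le k < \vert S \vert$ keeps us inside the regime of Lemma \ref{lem:60} (in particular $k < (r+1)\vert S\vert$ whenever $(0) \in \T_2$), which guarantees that only finitely many factors differ from the unit ideal and hence that each product is a well-defined ideal of $\DD$.

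The second step is to observe that $\cc$ is an \emph{integral} ideal. Each exponent $\ivr_k^{h, \{r\}}(S,\bb)$ is a sum of the nonnegative quantities $\min(h, \iv_{\bb}(\cdot))$ and so lies in $\NN \cup \{+\infty\}$; consequently each factor $\bb^{\ivr_k^{h, \{r\}}(S,\bb)}$ is an integral ideal (using $\bb^0 = \DD$ and the convention $\bb^{+\infty} = (0)$), and a product of integral ideals is integral. Thus $\cc \subseteq \DD$, and the identity $[k]!_{S,\T_2}^{h, \{r\}} = [k]!_{S,\T_1}^{h, \{r\}}\cdot \cc$ exhibits $[k]!_{S,\T_1}^{h, \{r\}}$ as a divisor of $[k]!_{S,\T_2}^{h, \{r\}}$ in the monoid of integral ideals of $\DD$. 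Invoking the standard equivalence in a Dedekind domain that $\aaa$ divides $\bb$ if and only if $\bb \subseteq \aaa$ then yields the equivalent containment $[k]!_{S,\T_2}^{h, \{r\}} \subseteq [k]!_{S,\T_1}^{h, \{r\}}$.

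The argument is essentially formal, so the only points needing care are the degenerate factors coming from the zero ideal, and this is where I expect the one genuine subtlety to lie. If some $\bb$ contributes an infinite exponent, its factor is $(0)$; because $\T_1 \subseteq \T_2$, such a factor occurring on the $\T_1$-side forces $[k]!_{S,\T_1}^{h,\{r\}} = [k]!_{S,\T_2}^{h,\{r\}} = (0)$, while one occurring only in $\T_2 \smallsetminus \T_1$ forces $\cc = (0)$ and hence $[k]!_{S,\T_2}^{h,\{r\}} = (0)$. In either case the divisibility and containment assertions remain valid, since every ideal divides $(0)$ and $(0)$ is contained in every ideal. Verifying that the convention for $\bb = (0)$ in Section \ref{sec:2} is consistent with these manipulations (so that $(0) \in \T$ contributes the unit ideal exactly when $k < \vert S\vert$) is the main, if mild, obstacle; everything else follows from nonnegativity of the exponents and the multiplicativity of the defining product.
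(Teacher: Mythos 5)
Your proposal is correct and follows essentially the same route as the paper: the paper's proof also splits $[k]!_{S,\T_2}^{h,\{r\}}=[k]!_{S,\T_2\smallsetminus\T_1}^{h,\{r\}}\,[k]!_{S,\T_1}^{h,\{r\}}$ and concludes from the fact that generalized factorials are integral ideals. Your additional remarks (nonnegativity of exponents, the finiteness from Lemma \ref{lem:60}, and the degenerate $(0)$-ideal cases) just make explicit what the paper leaves implicit.
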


We prove Proposition \ref{prop:factorial-ordering} in Section \ref{subsec:61}.
We discuss  results  inclusion relations when  varying the subset $S$, holding $\T$
fixed in Section \ref{sec:8}.

%
%

\subsection{Generalized-integer ideals and generalized binomial coefficient ideals}\label{subsec:33}

Integral  domains $\ddS$ have a notion of {\em fractional ideal} $\cc$, defined to be  
the subset of elements  $\cc$
of its quotient field $K$, such that there is a nonzero element $b \in \ddR$ for which
$\frac{1}{b} \cc$ is an (integral) ideal of $\ddS$.
The ideal multiplication on $\ddS$ induces  a well-defined multiplication on fractional ideals so that 
they form a commutative monoid with $\ddS$ as the identity
element $(1)$. A fractional ideal $\cc$ is  {\em invertible} if and only if there is a fractional
ideal $\dd$ such that $\cc \dd = (1)$. Dedekind domains $\DD$ are characterized
by  the property that every nonzero
fractional ideal of $\DD$ is invertible. (\cite[Chap 13, Basic Property 4.2(c)]{AusB:74}).

%
%
\begin{definition}\label{definition:integers-S-T}
Let $S$ be a nonempty subset of a Dedekind domain $\DD$.
Let $\T \subseteq \sI(\DD)$. For positive integers $n<\vert S\vert$, 
the \emph{$n$-th  generalized-integer  ideal associated to $S$ and $\T$}, denoted $[n]_{S,\T}$, is 
the fractional ideal of $\DD$ defined by
\begin{equation}\label{def:nST}
[n]_{S,\T}^{h, \{r\}}:=\frac{[n]!_{S,\T}^{h, \{r\}}}{[n-1]!_{S,\T}^{h, \{r\}}}.
\end{equation}
\end{definition}

We set $[0]_{S, \T} ^{h, \{r\}}= (1)$ the unit ideal, whence  $[1]_{S,\T}^{h, \{r\}}= [1] !_{S,\T}^{h, \{r\}}.$
 The definition \eqref{def:nST} represents a generalized factorial ideal as a product of generalized-integer ideals: 
\begin{equation}\label{eqn:gen-integer-factorial-product}
[n]!_{S,\T}^{h, \{r\}} := \prod_{j=1}^n [j]_{S,\T}^{h, \{r\}},
\end{equation} 
which follows by induction on $n  \ge 2$.

This  definition \eqref{def:nST}  defines  $[n]_{S, \T}$  as  a fractional ideal in $\DD$.
The next result asserts that $[n]_{S, \T}$  is an integral ideal (justifying the name ``generalized-integer").


\begin{thm}\label{thm:integer-integer}
Let $\SSS$ be a  nonempty subset of 
a Dedekind domain $\DD$. 
Let $\T\subseteq\sI(\DD)$. Then for positive integers $n<\vert S\vert$, the generalized-integer ideal  $[n]_{S,\T}^{h, \{r\}}$ 
is an integral ideal of $\DD$.
\end{thm}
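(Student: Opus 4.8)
The plan is to reduce integrality of $[n]_{S,\T}^{h,\{r\}}$ to a single monotonicity statement about the $\bb$-exponent sequence, and then to establish that monotonicity one ideal $\bb$ at a time by transporting each composite $\bb$ to the prime situation already controlled by Bhargava. Combining the definitions \eqref{eqn:factorials-S-T} and \eqref{def:nST}, the well-definedness Theorem \ref{thm:NEW-MAIN} lets me write
\[
[n]_{S,\T}^{h,\{r\}}=\prod_{\bb\in\T}\bb^{\,\ivr_n^{h,\{r\}}(S,\bb)-\ivr_{n-1}^{h,\{r\}}(S,\bb)},
\]
a finite product up to unit factors by Lemma \ref{lem:60}. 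Hence $[n]_{S,\T}^{h,\{r\}}$ is an integral ideal exactly when every exponent is nonnegative, i.e. when $\ivr_n^{h,\{r\}}(S,\bb)\ge\ivr_{n-1}^{h,\{r\}}(S,\bb)$ for each $\bb\in\T$. First I would dispose of the degenerate ideals: the unit ideal $\bb=(1)$ always contributes the factor $(1)$, and for $n<\vert S\vert$ the conventions for the zero ideal give $\ivr_n^{h,\{r\}}(S,(0))=\ivr_{n-1}^{h,\{r\}}(S,(0))=0$, so $\bb=(0)$ contributes $(0)^0=(1)$; neither affects integrality, so it suffices to treat nonzero proper ideals.

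For a fixed nonzero proper ideal $\bb$, I would invoke the embedding $\varphi_{\bb}$ constructed in the proof of Theorem \ref{thm:NEW-MAIN}, which identifies the refined $\bb$-exponent invariants of $S$ with the corresponding refined $\mathfrak{m}$-exponent invariants of the image set $U=\varphi_{\bb}(S)$ for the prime (maximal) ideal $\mathfrak{m}=t\KKK'[[t]]$ of $\KKK'[[t]]$; in particular $\ivr_k^{h,\{r\}}(S,\bb)=\ivr_k^{h,\{r\}}(U,\mathfrak{m})$ for all $k$. Because $\mathfrak{m}$ is prime, $\order_{\mathfrak{m}}$ is a genuine additive valuation, so Bhargava's superadditivity inequality \eqref{eqn:additive-invariant} applies to the pair $(U,\mathfrak{m})$. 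Taking $k=n-1$ and $\ell=1$ there gives
\[
\ivr_n^{h,\{r\}}(U,\mathfrak{m})\ \ge\ \ivr_{n-1}^{h,\{r\}}(U,\mathfrak{m})+\ivr_1^{h,\{r\}}(U,\mathfrak{m})\ \ge\ \ivr_{n-1}^{h,\{r\}}(U,\mathfrak{m}),
\]
using that $\ivr_1^{h,\{r\}}(U,\mathfrak{m})\ge 0$, being a (possibly empty) sum of nonnegative terms. Transporting back through $\varphi_{\bb}$ then yields $\ivr_n^{h,\{r\}}(S,\bb)\ge\ivr_{n-1}^{h,\{r\}}(S,\bb)$, which is exactly the nonnegativity of the $\bb$-exponent needed above.

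The hard part, and the reason the transfer is indispensable, is that for composite $\bb$ the map $\order_{\bb}$ is \emph{not} additive (only the one-sided inequality \eqref{eqn:non-valuation} holds), so the superadditivity \eqref{eqn:additive-invariant} cannot be applied to $\bb$ directly; it is precisely the passage to the valuation $\order_{\mathfrak{m}}$ on $\KKK'[[t]]$, together with the invariant-preserving property of $\varphi_{\bb}$ from Theorem \ref{thm:NEW-MAIN}, that makes Bhargava's inequality available. Once the exponentwise inequality is verified for every $\bb\in\T$, every exponent in the displayed product is nonnegative, so $[n]_{S,\T}^{h,\{r\}}$ is an integral ideal of $\DD$, completing the proof. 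A minor point I would check along the way is that the ratio \eqref{def:nST} is a well-defined fractional ideal in the range $n<\vert S\vert$ (so that only finitely many factors differ from $(1)$ and the exponent subtraction is legitimate), which again follows from Lemma \ref{lem:60}.
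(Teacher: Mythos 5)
Your proof is correct, and its skeleton matches the paper's: both express $[n]_{S,\T}^{h,\{r\}}$ via the product formula with exponents $\ivr_n^{h,\{r\}}(S,\bb)-\ivr_{n-1}^{h,\{r\}}(S,\bb)$ (finite by Lemma \ref{lem:60}), reduce integrality to exponent-wise nonnegativity, and transfer each nonzero proper $\bb$ to the prime ideal $(t)\KKK'[[t]]$ through the Property $\PC$ map of Theorem \ref{thm:varphi_B-general}. Where you genuinely diverge is in the source of the monotonicity $\ivr_n^{h,\{r\}}(S,\bb)\ge\ivr_{n-1}^{h,\{r\}}(S,\bb)$: you derive it from Bhargava's superadditivity inequality \eqref{eqn:additive-invariant} (Theorem \ref{thm:bhar-integral} in the local case), specialized to $k=n-1$, $\ell=1$, plus the observation $\ivr_1^{h,\{r\}}\ge 0$. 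The paper instead proves monotonicity directly and elementarily: Proposition \ref{prop:t-sequence-increasingA} shows the refined $t$-exponent sequences in $\KKK[[t]]$ are nondecreasing by a bare-hands argument (drop the largest index from a minimizing set $\sA_{k+1,r}$ and invoke the minimality defining $f_k$), Corollary \ref{cor:k-increasing} transfers this through Property $\PC$, and the proof of Theorem \ref{thm:integer-integer} simply cites Theorem \ref{thm:varphi_B-general} together with Corollary \ref{cor:k-increasing}. Both arguments are valid. Yours is more economical in that it reuses the same black-box input already needed for Theorem \ref{thm:binomial-integer} — indeed it shows that, given Theorem \ref{thm:bhar-integral}, the paper's Proposition \ref{prop:t-sequence-increasingA} is logically redundant for this purpose; the paper's route buys independence from Bhargava's deeper superadditivity theorem, keeping the integrality of generalized-integer ideals on elementary, self-contained footing. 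A minor point in your favor: your explicit separate treatment of $\bb=(0)$ and $\bb=(1)$ is more careful than the paper's proof, whose quantifier ``for all $\bb\in\sI(\DD)$'' silently relies on the conventions for those two ideals established in the proof of Theorem \ref{thm:NEW-MAIN}, since Corollary \ref{cor:k-increasing} itself applies only to nonzero proper ideals.
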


We prove Theorem \ref{thm:integer-integer} in Section \ref{subsec:61}.

%
%
\begin{definition}\label{definition:binomial-S-T}
Let $S$ be a nonempty subset of a Dedekind domain $\DD$. Let $\T\subseteq \sI(\DD)$. For  integers $k, \ell \ge0$
with 
$ k +\ell<\vert S\vert$, the \emph{generalized binomial coefficient  associated to $S$ and $\T$}, denoted ${k+\ell \brack\ell}_{S,\sT}$, is defined by
\begin{equation}
{k+\ell \brack\ell}_{S,\sT}^{h, \{r\}}:=\frac{[k+\ell]!_{S,\T}^{h, \{r\}}}{[k!]!_{S,\sT}^{h, \{r\}}[\ell]!_{S,\sT}^{h, \{r\}}}.
\end{equation}
\end{definition}

By definition the set ${k+\ell \brack\ell}_{S,\T}$ is a fractional ideal. The next result asserts it is an integral ideal.
%
%
\begin{thm}\label{thm:binomial-integer}
Let $S$ be a nonempty subset of a Dedekind domain $\DD$ . Let $\T\subseteq \sI(\DD)$. Then for integers 
$k, \ell \ge 0$ with  $ k+\ell<\vert S\vert$, the generalized binomial coefficient ${k+\ell\brack\ell}_{S,\T}^{h, \{r\}}$ is an integral ideal. That is, as integral ideals, 
$[k]!_{S,\sT}^{h, \{r\}}[\ell]!_{S,\sT}^{h, \{r\}}$ divides $[k+ \ell]!_{S,\T}^{h, \{r\}}$.
\end{thm}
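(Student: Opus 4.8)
The plan is to reduce the asserted divisibility to a superadditivity inequality for the refined $\bb$-exponent sequences, handled one ideal $\bb$ at a time, and then to import that inequality from the prime case through the power-series embedding already constructed for Theorem \ref{thm:NEW-MAIN}. First I would record the product form: by Definition \ref{definition:factorials-S-T} and Lemma \ref{lem:60} each factorial is a \emph{finite} product of ideal powers, so that
\[
[k]!_{S,\T}^{h,\{r\}}\,[\ell]!_{S,\T}^{h,\{r\}}=\prod_{\bb\in\T}\bb^{\,\ivr_k^{h,\{r\}}(S,\bb)+\ivr_\ell^{h,\{r\}}(S,\bb)},\qquad [k+\ell]!_{S,\T}^{h,\{r\}}=\prod_{\bb\in\T}\bb^{\,\ivr_{k+\ell}^{h,\{r\}}(S,\bb)}.
\]
Divisibility in the monoid of integral ideals of $\DD$ is multiplicative (if $A_\bb$ divides $B_\bb$ for each $\bb$ in a finite index set, then $\prod_\bb A_\bb$ divides $\prod_\bb B_\bb$), and for a single nonzero ideal $\bb$ one has $\bb^{m}$ dividing $\bb^{n}$ whenever $n\ge m$. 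Hence it suffices to prove, for each $\bb\in\T$, the superadditivity inequality
\[
\ivr_{k+\ell}^{h,\{r\}}(S,\bb)\;\ge\;\ivr_{k}^{h,\{r\}}(S,\bb)+\ivr_{\ell}^{h,\{r\}}(S,\bb).
\]
The unit ideal $\bb=(1)$ contributes only trivial factors, and under the hypothesis $k+\ell<\vert S\vert$ together with the convention on $\bb=(0)$ the zero-ideal case is vacuous, so only nonzero proper ideals $\bb$ need be treated.

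The second step proves this inequality for each fixed nonzero proper $\bb$ by transport to a prime ideal. I would invoke the injective embedding $\varphi_{\bb}\colon\DD\to\KKK'[[t]]$ built in the proof of Theorem \ref{thm:NEW-MAIN}, which carries the refined $\bb$-ordering structure on $S$ to the refined $\mm$-ordering structure on $U=\varphi_{\bb}(S)$, where $\mm=t\,\KKK'[[t]]$ is the maximal, hence prime, ideal, and which preserves the invariants: $\ivr_n^{h,\{r\}}(S,\bb)=\ivr_n^{h,\{r\}}(U,\mm)$ for every index $n$ and each of the three refinement types. Since $\mm$ is prime, $\order_{\mm}$ is an additive valuation, so Bhargava's prime-ideal superadditivity \eqref{eqn:additive-invariant}—the content underlying Theorem \ref{thm:binomials}, available for $\KKK'[[t]]$ through Theorem \ref{thm:T-invariant}—applies to $U$ and $\mm$ and gives
\[
\ivr_{k+\ell}^{h,\{r\}}(U,\mm)\;\ge\;\ivr_{k}^{h,\{r\}}(U,\mm)+\ivr_{\ell}^{h,\{r\}}(U,\mm).
\]
Substituting the equalities of invariants yields the desired inequality for $\bb$. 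Because $\varphi_{\bb}$ is injective, $\vert U\vert=\vert S\vert$, so the condition $k+\ell<\vert S\vert$ passes to $U$ and all factorials remain nonzero integral ideals throughout.

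The main obstacle is exactly the passage to composite $\bb$: since $\order_{\bb}$ is not a valuation—only the superadditive relation \eqref{eqn:non-valuation} holds—one cannot mimic the prime-ideal argument directly, and the inequality genuinely requires routing through the embedding. The delicate point is therefore to confirm that the embedding of Theorem \ref{thm:NEW-MAIN} transfers not merely the plain invariants but the full order-$h$, $r$-removed invariants, and for all indices $n\in\{k,\ell,k+\ell\}$ simultaneously; this is precisely what Theorem \ref{thm:NEW-MAIN} asserts. Granting that, the prime-case inequality supplies the per-$\bb$ superadditivity, and assembling the individual divisibilities $\bb^{\,\ivr_k+\ivr_\ell}\mid\bb^{\,\ivr_{k+\ell}}$ into the product divisibility $[k]!_{S,\T}^{h,\{r\}}[\ell]!_{S,\T}^{h,\{r\}}\mid[k+\ell]!_{S,\T}^{h,\{r\}}$ is routine, which is the claimed integrality of the generalized binomial coefficient.
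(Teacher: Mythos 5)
Your proposal is correct and follows essentially the same route as the paper: the paper also reduces to the per-ideal superadditivity $\ivr_{k+\ell}^{h,\{r\}}(S,\bb)\ge\ivr_k^{h,\{r\}}(S,\bb)+\ivr_\ell^{h,\{r\}}(S,\bb)$, obtained by combining the Property $\PC$ embedding $\varphi_{\bb}$ of Theorem \ref{thm:varphi_B-general} with Bhargava's inequality for the prime ideal $(t)\KKK[[t]]$ (Theorem \ref{thm:bhar-integral}, packaged as Corollary \ref{corollary:binomial-B}), and then concludes via the finite product formula \eqref{eqn:binomial-factorization-formula} of Lemma \ref{lem:62}. Your treatment just spells out the bookkeeping (finiteness from Lemma \ref{lem:60}, the trivial $(0)$ and $(1)$ cases) that the paper delegates to those lemmas.
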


We prove Theorem \ref{thm:binomial-integer} in Section \ref{subsec:61}.

%
%
%
%
\subsection{Contents}
The contents of the remainder of the  paper are as follows.
\begin{enumerate}
 \item
 Section \ref{sec:Bhargava-K[[t]]} studies  $t$-orderings and $t$-exponent sequences for
 the prime ideal $\pp=(t)\ddR[[t]]$ of  a formal power series ring
 $\ddR[[t]]$, where $\ddR$ is a field (of any characteristic). It states results
 of Bhargava on  the well-definability of
 $t$-sequences of different sorts  for  $(t)\ddR[[t]]$ (Theorem \ref{thm:T-invariant}).
 It shows  a local inequality relating the $k$-th, $\ell$-th and $(k+ \ell)$-th $\pp$-exponent invariants 
 for a fixed $S$.(Theorem \ref{thm:bhar-integral}). 
  It proves a nondecreasing property for
 such sequences as $k$ varies directly from the definition (Proposition \ref{prop:t-sequence-increasingA}). 
 \item
  Section \ref{sec:propertyC} formulates  a  Property  C(R, $\bb, \ddR)$
  (Property C for short), where $\bb$ is an ideal of $R$ and $\ddR$ is a field.  It  shows 
  that when a map $\phi_{\bb}: R \to \ddR[[t]]$ between rings exists that  has 
 Property C(R, $\bb$, $\ddR$),  then  the existence 
 and well-definedness of $\bb$-invariants for all subsets $S$ 
 of $R$ holds. This map $\phi_{\bb}$ need not preserve either  ring operation:
 ring   addition or ring multiplication.
 \item
 Section \ref{sec:5}  
   establishes that Dedekind domains $\DD$ are $\sI$-sequenceable
   by showing  that  Property C($\DD, \bb, \ddR$) holds
 for all  nonzero ideals $\bb$ of all Dedekind domains $\DD$,
 for a suitable field $K$.
 We use the fact that an arbitrary Dedekind
 domain $\DD$ can be injectively imbedded into a larger Dedekind domain $\DD'$ that is a principal ideal domain.
 This is a result of Claborn \cite{Clab:65a}.
 We construct for a given nonzero ideal $\bb$ or $\RRR$  a map $ \phi_{\bb}: \RRR \to \ddR'[[t]]$ 
 where $\ddR'$ is the quotient field of $\DD'$, satisfying property $C(S, \bb, \ddR')$.
 In Section \ref{subsec:proofs-general} we 
 complete the proof of the well-definedness theorem \ref{thm:NEW-MAIN}. 

 \item
 Section \ref{sec:6} gives proofs of  results of Section \ref{sec:2} about generalized factorial ideals
 and generalized binomial coefficients for rings $\RRR$ that are $\sI$-sequenceable.

 \item
 Section \ref{sec:8}  discusses extensions of this work.  
 \end{enumerate}
%
%

\section{$\tp$-orderings and $\tp$-sequences  in the power series ring $\ddR[[t]]$}\label{sec:Bhargava-K[[t]]}

Throughout this section, we let $\ddR$ be a field of any characteristic.
We consider the
formal power series ring $R = \ddR[[t]]$,  
together with its  unique nonzero prime ideal
$(t) = t \ddR[[t]]$. The ring $R$ is a discrete valuation ring, so is a Dedekind domain.
We treat the special case of   Bhargava's theorem of the well-definedness
of his refined $\pp$-sequences for any subset $U$ of this ring, with $\pp= (t)$;
we term these $t$-sequences, stated as Theorem \ref{thm:T-invariant}. 
We derive  monotonicity inequalities for the invariants. 
 
%
%
 \subsection{Bhargava $t$-sequence invariants for local rings $K[[t]]$}\label{subsec:41}

Bhargava's proofs in \cite{Bhar:09} of the well-definedness of the associated
$\pp$-sequences proceed by reducing to the local ring case.
For our results it suffices to  know 
that his results hold  for formal  power series rings $\DD = \KKK[[t]]$
over a field $\KKK$ of any characteristic, for the single prime ideal  $\pp= t \KKK[[t]]$; we rename
these $\pp$-orderings and associated $\pp$-exponent sequences
``$t$-orderings " and ``$t$-exponent sequences", respectively.

Let $K$ be a field of any characteristic, and let $\ddR[[t]]$ be the ring of formal power series over $\ddR$. 
We use an additive valuation $\dnu:\ddR[[t]]\rightarrow\{0,1,2,\dots\}\cup\{+\infty\}$ for nonzero $f(t)=\sum_{i=0}^\infty\dda_it^i$ by
$$
\dnu(f(t))=\min\left\{i\in\mathbb{N}:\dda_i\neq0\right\} \in \NN,
$$
and we set $\dnu(0)=+\infty$.
That is,
$$
\dnu(f(t))=\sup\left\{\alpha\in\mathbb{N}:t^\alpha\text{ divides }f(t)\right\}.
$$

The function $\dnu$ is an (additive) discrete valuation on $\ddR[[t]]$. That is, $\dnu: \ddR \to \NN \cup \{ +\infty\}.$
\begin{equation}\label{add-nu}
\dnu(f(t)\pm g(t))\ge\min\left\{\dnu(f(t)),\dnu(g(t))\right\},
\end{equation}
and 
\begin{equation}\label{mul-nu}
\dnu(f(t)g(t))=\dnu(f(t))+\dnu(g(t)).
\end{equation}


%
\begin{thm}[Bhargava] \label{thm:T-invariant}
For  the formal power series ring $\ddR[[t]]$, for all nonempty sets  $\ddS \subseteq \ddR[[t]]$,
and any $\tp$-ordering $\mathbf{f}=\left(f_i(t)\right)_{i=0}^\infty$ of $\ddS$ 
for the prime ideal $\tp= t \ddR[[t]]$ of the following four types
(plain, $r$-removed, order $h$, or $r$-removed of order $h$),  
 the corresponding $t$-exponent sequences 
\begin{eqnarray}\label{t-sequence-definition}
\ddnu_k( \ddS,   t \ddR[[t]], \mathbf{f}) &:=& \sum_{j=0}^{k-1}\dnu\left(f_k(t)-f_j(t)\right) \in \NN \cup \{ +\infty\} \label{eq:43}\\
\ddnu_k^{\{r\}} ( \ddS,t \ddR[[t]],\mathbf{f}) &:=&\min_{\sA_{k,r}}\left[ \sum_{j \in \sA_{k,r}}\dnu\left(f_k^{\{r\}}(t)-f_j^{\{r\}}(t)\right)\right] \in \NN \cup \{ +\infty\} \label{eq:44}\\
\ddnu_k^{h}(\ddS,t \ddR[[t]], \mathbf{f}) &:=& \sum_{j=0}^{k-1}\min[h, \dnu\left(f_k^{h}(t)-f_j^{h}(t)\right)] \in \NN \cup \{ +\infty\} \label{eq:45}\\
\ddnu_k^{h, \{r\}}( \ddS,t \ddR[[t]], \mathbf{f}) &:=& \min_{\sA_{k,r}}\left[\sum_{j \in \sA_{k,r}}
\min[h,\dnu\left(f_k^{h,\{r\}}(t)-f_j^{h, \{r\}}(t)\right)]  \right]\in \NN \cup \{+\infty\} \label{eq:46}
\end{eqnarray}
are each  independent of the choice of $\tp$-ordering $\mathbf{f}$ of $\ddS$ of the same type.
(For $r$-removed sequences,  $\sA_{k,r}$ runs over all subsets of $\{0,1,..., k-1\}$ of cardinality $k-r$). 
\end{thm}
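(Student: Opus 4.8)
The plan is to treat this statement as a direct specialization of Bhargava's Theorem \ref{thm:Bhar-main09} to a single, explicitly given Dedekind domain, using it as a black box exactly as the paper proposes. First I would record the standard fact that $\ddR[[t]]$ is a discrete valuation ring, hence a local Dedekind domain, whose unique nonzero prime ideal is $\pp = (t) = t\ddR[[t]]$. I would then observe that the order function $\dnu$ defined at the start of this section is precisely the additive valuation $\iv_{\pp}$ attached to $\pp = (t)$: this is immediate from the definition $\dnu(f(t)) = \sup\{\alpha \in \NN : t^{\alpha} \text{ divides } f(t)\}$ together with the multiplicativity \eqref{mul-nu}, since $t^{\alpha} \mid f(t)$ holds exactly when $f(t)\ddR[[t]] \subseteq \pp^{\alpha}$.

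With the dictionary $\pp \leftrightarrow (t)$ and $\iv_{\pp} \leftrightarrow \dnu$ fixed, the next step is to check that the four constructions in the statement are verbatim the four refined $\pp$-orderings of Definition \ref{def:refined-p-ordering} specialized to $\pp = (t)$. The plain $t$-ordering is the case $r = 0$, $h = +\infty$; substituting $\iv_{\pp} = \dnu$ into the minimization conditions of Definition \ref{def:refined-p-ordering} yields, respectively, the $r$-removed, order $h$, and $r$-removed of order $h$ $t$-orderings. Comparing formulas term by term, the four $t$-exponent sequences \eqref{eq:43}--\eqref{eq:46} are then equal to the corresponding refined $\pp$-exponent sequences for $\pp = (t)$, with the plain case \eqref{eq:43} being the $r = 0$, $h = +\infty$ reduction recorded in \eqref{eq:add-mult-prime}.

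The conclusion then follows immediately by invoking Theorem \ref{thm:Bhar-main09}: its parts (1), (2), and (3) assert precisely that the $r$-removed, order $h$, and $r$-removed of order $h$ $\pp$-exponent sequences are independent of the chosen ordering for every nonzero prime ideal $\pp$ of any Dedekind domain, and in particular for $\pp = (t)$ in $\ddR[[t]]$. Because $\ddR[[t]]$ is already local, no localization is needed; Bhargava's local-ring arguments (the very ones he uses to establish parts (1) and (2)) apply to $\ddR[[t]]$ directly. I would close by noting that the extended value $+\infty$, which arises when $\ddS$ is finite and entries must repeat, or when a minimizing subset $\sA_{k,r}$ is forced to pair an element with itself, causes no difficulty, since $\dnu$ and all four minimizations take values in $\NN \cup \{+\infty\}$ throughout.

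Since Bhargava's theorems carry all the genuine content, there is no real mathematical obstacle here; the closest thing to one is purely bookkeeping, namely the notational translation and the degenerate indices $k \le r$, where the admissible subsets $\sA_{k,r}$ of cardinality $k - r$ become empty and the corresponding sums are taken to be zero. I would verify that these conventions match Bhargava's before concluding. (For the plain case one could alternatively cite the self-contained argument of the authors in \cite{LY:24a}, avoiding the black box entirely.)
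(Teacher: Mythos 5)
Your proposal is correct and matches the paper's approach: the paper also proves Theorem \ref{thm:T-invariant} purely as a black-box citation of Bhargava's results in \cite{Bhar:09} (the local-ring arguments of Sections 3.1.1 and 3.2.2 and Theorem 30), which are exactly the results restated as Theorem \ref{thm:Bhar-main09}. Your routing through Theorem \ref{thm:Bhar-main09} specialized to the local Dedekind domain $\ddR[[t]]$ with $\pp=(t)$, together with the dictionary $\iv_{\pp}\leftrightarrow\dnu$, is the same reduction with only cosmetic differences in which formulation of Bhargava's theorems is cited.
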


\begin{proof}
(1) is proved for local rings $R$ in Sect. 3.1.1 of \cite{Bhar:09}.
(2) is  proved for local rings $R$ in Sect. 3.2.2 of \cite{Bhar:09}.
(3) is a special case of  Theorem 30 stated in \cite{Bhar:09}. 
Proof details for Theorem 30 are   sketched on  p. 29 of \cite{Bhar:09}.
\end{proof} 

The case of ``plain" $t$-exponent sequences occurs when $r=0$ and $h= +\infty$. 
In \cite{LY:24a} we   gave an independent proof of this special case \eqref{eq:43} of
Theorem \ref{thm:T-invariant}, using the max-min criterion stated in Theorem \ref{thm:max-min}.

\begin{thm}[Bhargava]\label{thm:bhar-integral}
For all nonempty subsets $U$ of $\KKK[[t]]$, and the ideal $(t):= t \KKK[[t]]$,
and all integers $k, \ell \ge 0$, one has 
\begin{equation}
 \ivr_{k+\ell}^{h, \{r\}}( U, (t)\KKK[[t]]) \ge  \ivr_{k}^{h, \{r\}}( U, (t)\KKK[[t]])+  \ivr_{\ell}^{h, \{r\}}( U, (t)\KKK[[t]]) .
\end{equation} 
\end{thm}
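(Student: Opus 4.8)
The plan is to prove the superadditivity inequality
$$
\ivr_{k+\ell}^{h,\{r\}}(U,(t)\KKK[[t]]) \ge \ivr_{k}^{h,\{r\}}(U,(t)\KKK[[t]]) + \ivr_{\ell}^{h,\{r\}}(U,(t)\KKK[[t]])
$$
directly from the definitions, exploiting the fact that over $\KKK[[t]]$ the function $\dnu$ is a genuine additive valuation (this is where the local ring structure is essential and where the argument would fail for composite $\bb$). Since Theorem~\ref{thm:T-invariant} guarantees that each $\ivr_m^{h,\{r\}}(U,(t)\KKK[[t]])$ is independent of the chosen $t$-ordering, I am free to compute the left-hand side using \emph{one cleverly chosen} $t$-ordering of order $h$ that is $r$-removed, and to compute the right-hand sides using the initial segments of that same ordering. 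First I would fix a single $t$-ordering $\mathbf{f} = (f_i(t))_{i\ge 0}$ of $U$ of the $r$-removed, order-$h$ type, and evaluate all three exponents along it.

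The core computation is to relate the minimizing index set $\sA_{k+\ell,r}$ at step $k+\ell$ to index sets usable at steps $k$ and $\ell$. First I would consider the element $f_{k+\ell}(t)$ chosen at step $k+\ell$, together with its minimizing subset $\sA_{k+\ell,r} \subset \{0,1,\dots,k+\ell-1\}$ of cardinality $k+\ell-r$. The key idea is a splitting argument: I want to split the quantity
$$
\ivr_{k+\ell}^{h,\{r\}}(U,(t)\KKK[[t]]) = \sum_{j \in \sA_{k+\ell,r}} \min\bigl(h, \dnu(f_{k+\ell}(t)-f_j(t))\bigr)
$$
into a part that lower-bounds an order-$h$, $r$-removed sum of length $k$ and a part that lower-bounds one of length $\ell$. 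The clean way to do this is to recall (from Bhargava's local theory, or Proposition~\ref{prop:t-sequence-increasingA} as cited) that an $r$-removed $t$-ordering of order $h$ produces a \emph{nondecreasing} exponent sequence, and moreover that the minimization defining $\ivr_m^{h,\{r\}}$ at each step is a global minimum over all $a_n \in U$. Therefore $f_{k+\ell}(t)$ is itself an admissible candidate for the minimizations defining $\ivr_k^{h,\{r\}}$ and $\ivr_\ell^{h,\{r\}}$ when tested against the appropriate earlier index sets.

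Concretely, I would partition (or select from) $\sA_{k+\ell,r}$ into two index sets and use the fact that discarding the $\min(h,\cdot)$ truncation only ever \emph{decreases} each summand, while the additivity $\dnu(xy)=\dnu(x)+\dnu(y)$ controls how valuations of products of differences combine. The cleanest route mirrors Bhargava's own proof of \eqref{eqn:additive-invariant}: one shows that any configuration achieving the minimum for index $k+\ell$ can be decomposed so that a sub-configuration of appropriate size is admissible for index $k$ and the complementary one for index $\ell$, whence the value for $k+\ell$ is at least the sum of the two separate minima. I would carry this out by choosing $f_{k+\ell}(t)$ as the test point for both the length-$k$ and length-$\ell$ minimizations and bounding below, taking care that the cardinality constraints ($k-r$ indices and $\ell-r$ indices, versus $k+\ell-r$) are handled correctly in the $r$-removed case.

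The main obstacle I expect is precisely this bookkeeping of cardinalities in the $r$-removed setting: the minimizing set at step $k+\ell$ has size $k+\ell-r$, but the two target minimizations want sizes $k-r$ and $\ell-r$, whose sizes sum to $k+\ell-2r$ rather than $k+\ell-r$. Reconciling this gap of $r$ indices is the delicate point, and I anticipate it requires either a careful choice of which $r$ indices to drop so that the remaining split is admissible for both subproblems, or an appeal to the nondecreasing/monotonicity property to absorb the extra indices as nonnegative contributions. I would structure the proof so that, after reducing to a single chosen ordering via Theorem~\ref{thm:T-invariant}, the entire argument rests on the additive valuation property of $\dnu$ and this combinatorial splitting; everything else is routine. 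For the order-$h$ truncation, the inequality $\min(h,x+y) \le \min(h,x)+\min(h,y)$ points the wrong way, so I would instead rely on the global-minimum characterization rather than trying to manipulate the truncated summands termwise.
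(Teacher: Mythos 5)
Your proposal takes a genuinely different route from the paper, but it contains a gap at its central step. The paper's proof is essentially a citation: it invokes Theorem 1 of Bhargava \cite{Bhar:09} for the local ring $\KKK[[t]]$ (the divisibility $[k]!_{U}^{h,\{r\}}\,[\ell]!_{U}^{h,\{r\}} \mid [k+\ell]!_{U}^{h,\{r\}}$) and then converts ideal divisibility into the exponent inequality by a two-line computation with $\nu_t$. You instead try to reprove Bhargava's theorem from the definitions by splitting the minimizing sum at step $k+\ell$. Half of that splitting is fine: since $\vert\sA_{k+\ell,r}\cap\{0,\dots,k-1\}\vert \ge k-r$, the block of indices below $k$ does bound $\ivr_k^{h,\{r\}}(U,(t))$ from below, because $f_{k+\ell}$ is an admissible test element against the initial segment $f_0,\dots,f_{k-1}$. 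The gap is the complementary block: it runs over the later elements $f_k,\dots,f_{k+\ell-1}$, which are \emph{not} an initial segment of any $t$-ordering, and a sum over an arbitrary configuration of $\ell$ elements of $U$ with a test point is \emph{not} bounded below by $\ivr_\ell^{h,\{r\}}(U,(t))$. Concretely, already for $r=0$, $h=+\infty$: take $U=\{0,1,t\}\subset\KKK[[t]]$; every $t$-ordering gives $\ivr_2(U,(t))=1$, yet the configuration $\{0,t\}$ with test element $1$ gives $\dnu(1-0)+\dnu(1-t)=0$. So the decomposition principle you appeal to (``any configuration achieving the minimum can be decomposed so that sub-configurations are admissible'') is false as stated; rescuing it would require exploiting the greedy structure of the tail of the ordering, which is precisely the hard content of Bhargava's theorem and is left unproved in your sketch. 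Note also that this failure occurs already at $r=0$, so the $r$-removed cardinality mismatch you flag as the delicate point is not the real obstruction, and neither of your fallback ideas (monotonicity, or a careful choice of dropped indices) addresses it.

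It is worth knowing that in the plain case ($r=0$, $h=+\infty$) the paper's own machinery gives a clean direct proof that avoids citing Bhargava: by the max-min criterion of Theorem \ref{thm:max-min}, if $p_k$ and $p_\ell$ are $t$-primitive polynomials attaining the maxima for indices $k$ and $\ell$, then $p_kp_\ell$ is $t$-primitive of degree at most $k+\ell$, and additivity of $\dnu$ yields $\ivr_{k+\ell}\ge \ivr_k+\ivr_\ell$ immediately. It is the absence (in print) of such a criterion for general $(h,\{r\})$ that forces the paper to use Bhargava's Theorem 1 as a black box, and your proposal does not supply a substitute for it.
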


\begin{proof}
This result is a special case of Theorem 1 of Bhargava \cite{Bhar:09}, applied to the local ring   $R= \KKK[[t]]$.
for the prime ideal $(t)\KKK[[t]]$, and says for the subset $U$ that 
$[k]!_{U}^{h, \{r\}} [\ell]!_{U}^{h, \{r\}} $ divides $[k +\ell]!_{U}^{h, \{r\}}$.
Then we have 
\begin{eqnarray*}
\ivr_{k+\ell}^{h, \{r\}}( U, (t)) -  \ivr_{k}^{h, \{r\}}( U, (t))-  \ivr_{\ell}^{h, \{r\}} ( U, (t))  &=& \nu_t( [k+\ell]!_U^{h, \{r\}}) -\nu_t([k]!_U^{h, \{r\}})- \nu_t([\ell]!_U^{h, \{r\}}) \\
&= & \nu_{t}( \frac{ [k+\ell]!_{U}^{h, \{r\}} }  {[k]!_{U}^{h, \{r\}} [\ell]!_U^{h, \{r\}} })
\ge 0,
\end{eqnarray*}
as asserted.
\end{proof} 


\begin{rem}\label{rem:43}
Theorem \ref{thm:T-invariant}  can be generalized  to
apply to  a larger class of formal power series rings $R= \DDS[[t]]$.
to show $\tp$-sequenceability
 for the maximal ideal $\mm= t \DDS$ (for various refined $\bb$-orderings). 
 When the coefficient ring $\DDS$ is not a field, 
 then $\DDS[[t]]$ will not be a Dedekind domain, which
 falls outside the Bhargava framework.
 
 We do not address the extent of possible generalization,
  noting only that  for ``plain" $t$-orderings 
 the class of allowed $\DDS$ will include all Dedekind domains $\DD$, 
 relying on generalization of Theorem \ref{thm:max-min} 
 valid for plain $t$-orderings for the maximal ideal $t \DD[[t]]$.  
\end{rem}

%
%

\subsection{$t$-exponent sequences of refined $t$-orderings are non-decreasing}\label{subsec:48}

\begin{prop}\label{prop:t-sequence-increasingA}
Let $h \in \NN \cup \{ +\infty\}$ and $r \in \NN$ be fixed. 
For nonempty $\ddS\subseteq\ddR[[t]]$ and a fixed
$r$-removed  $\tp$-ordering of order $h$, $\mathbf{f}=\left(f_i^{h, \{r\}}(t)\right)_{i=0}^\infty$,  of $\ddS$, 
the associated $\tp$-exponent 
sequence $\left(\ddnu_k^{h, \{r\}}(\ddS,\mathbf{f})\right)_{k=0}^\infty$
 is nondecreasing (=weakly increasing) for  $0 \le k < \infty$. That is, for$k \ge 0$,
 \begin{equation}\label{eq:weak-increase}
 \ddnu_{k+1}^{h, \{r\}}(\ddS,\mathbf{f}) \ge \ddnu_k^{h, \{r\}}(\ddS,\mathbf{f}),
 \end{equation}
\end{prop}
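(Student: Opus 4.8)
The plan is to reduce the statement to a single pointwise monotonicity inequality for the underlying cost functions and then invoke the defining minimality property of a genuine $t$-ordering. Writing $f_i(t) := f_i^{h,\{r\}}(t)$ for the terms of $\mathbf{f}$, I would introduce for each $s \in \ddS$ and $k \ge 0$ the cost function
\[
g_k(s) := \min_{\substack{A \subseteq \{0,1,\dots,k-1\}\\ |A| = \max(k-r,0)}} \; \sum_{i \in A} \min\bigl(h,\, \dnu(s - f_i(t))\bigr),
\]
which records the least order-$h$, $r$-removed valuation sum of a candidate $s$ against the fixed prefix $f_0(t),\dots,f_{k-1}(t)$. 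With this notation the definition \eqref{eq:46} reads $\ddnu_k^{h,\{r\}}(\ddS,\mathbf{f}) = g_k(f_k(t))$, and because $\mathbf{f}$ is an actual $r$-removed $t$-ordering of order $h$ rather than an arbitrary test sequence, the term $f_k(t)$ is chosen to realize the global minimum, so that $\ddnu_k^{h,\{r\}}(\ddS,\mathbf{f}) = \min_{s \in \ddS} g_k(s)$.

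The key step is the pointwise inequality $g_{k+1}(s) \ge g_k(s)$, valid for every $s \in \ddS$ and every $k \ge 0$. To prove it I would fix a subset $A \subseteq \{0,1,\dots,k\}$ of cardinality $\max(k+1-r,0)$ attaining $g_{k+1}(s)$ and delete one index to manufacture a legal competitor $B \subseteq \{0,1,\dots,k-1\}$ of cardinality $\max(k-r,0)$ for $g_k(s)$: if $k \in A$ delete the index $k$, while if $k \notin A$ (which forces $r \ge 1$) delete any single index of $A$. Since each summand $\min\bigl(h,\dnu(s - f_i(t))\bigr)$ is a nonnegative element of $\NN \cup \{+\infty\}$, dropping an index cannot increase the sum, so $g_k(s) \le \sum_{i \in B}\min\bigl(h,\dnu(s - f_i(t))\bigr) \le \sum_{i \in A}\min\bigl(h,\dnu(s - f_i(t))\bigr) = g_{k+1}(s)$. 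The small-index range $k < r$, where both cost functions vanish identically by the $\max(\,\cdot\,,0)$ convention, is immediate.

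Combining the two ingredients finishes the argument. Since $f_{k+1}(t) \in \ddS$,
\[
\ddnu_{k+1}^{h,\{r\}}(\ddS,\mathbf{f}) = g_{k+1}\bigl(f_{k+1}(t)\bigr) \ge g_k\bigl(f_{k+1}(t)\bigr) \ge \min_{s \in \ddS} g_k(s) = \ddnu_k^{h,\{r\}}(\ddS,\mathbf{f}),
\]
where the first inequality is the pointwise lemma and the second is the minimality of $f_k(t)$ coming from the $t$-ordering property. All comparisons are read in $\NN \cup \{+\infty\}$ with the usual conventions, so the conclusion stays valid when some $\ddnu$ equals $+\infty$.

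I expect no serious obstacle: the argument is elementary and does not even use that $\dnu$ is a valuation, only that its truncations $\min(h,\dnu(\cdot))$ are nonnegative. The points that need genuine care are all bookkeeping: tracking the cardinality convention $\max(k-r,0)$ at small indices, checking that an optimizing set for $g_{k+1}$ can always be trimmed to an admissible competitor for $g_k$ (in particular that $k \notin A$ can occur only when $r \ge 1$), and making sure that the reduction $\ddnu_k^{h,\{r\}}(\ddS,\mathbf{f}) = \min_{s \in \ddS} g_k(s)$ really invokes the ordering hypothesis rather than treating $\mathbf{f}$ as an arbitrary test sequence, for which monotonicity can fail.
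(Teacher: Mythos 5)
Your proof is correct and follows essentially the same route as the paper's: both arguments trim a minimizing index set at level $k+1$ by deleting one index (the paper deletes the largest element, you delete $k$ when present) to produce an admissible competitor at level $k$, use nonnegativity of the summands $\min\bigl(h,\dnu(\cdot)\bigr)$, and then invoke the minimality of $f_k(t)$ from the $t$-ordering property to pass from $f_{k+1}(t)$ to $f_k(t)$. Your packaging via the pointwise inequality $g_{k+1}(s)\ge g_k(s)$ is only an organizational difference, not a different argument.
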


\begin{proof}
Let $\ddS\subseteq\ddR[[t]]$, and let $\mathbf{f}=\left(f_i^{h, \{r\}}(t)\right)_{i=0}^\infty$ be an
$r$-removed $\tp$-ordering of $\ddS$ of order $h$. 
Let $k$ be a nonnegative integer.
From the $t$-exponent sequence definition
associated to $\mathbf{f}$, we have
\begin{equation}\label{eq:A1}
\ddnu_k^{h, \{r\}}(\ddS,\mathbf{f}) :=\min_{\sA_{k, r}} \left[ \sum_{j \in \sA_{k,r}} \min[h, \dnu\left(f_k^{h, \{r\}}(t)-f_j^{h, \{r\}}(t)\right)] \right],
\end{equation}
Here $\sA_{k,r}$ runs over all $k-r$ element subsets of $\{0,1, \cdots , k-1\}$. We have similarly
\begin{equation}\label{eq:A2}
\ddnu_{k+1}^{h, \{r\}}(\ddS,\mathbf{f}) :=\min_{\sA_{k+1, r}} \left[ \sum_{j \in \sA_{k+1,r}} \min[h, \dnu\left(f_{k+1}^{h, \{r\}}(t)-f_j^{h, \{r\}}(t)\right)] \right],
\end{equation}
Now for  each  fixed $k+1-r$ element 
set $\sA_{k+1,r}$ of $\{0,1, \cdots k\}$, we set 
$\sA_{k,r}^{\ast} = \sA_{k+1,r} \smallsetminus \{\ell\},$ 
where $\ell$ is the largest element of $\sA_{k+1,r}.$ In particular $\sA_{k,r}^{\ast}$ is a $k-r$ element subset of $\{0, 1,..., k-1\}$ 
because 
the element $k$ is removed if it is in $\sA_{k+1, r}$. Then we have
\begin{eqnarray*}
 \sum_{j \in \sA_{k+1,r}} \min[h, \dnu\left(f_{k+1}^{h, \{r\}}(t)-f_j^{h, \{r\}}(t)\right)] 
&\ge & \left[ \sum_{j \in \sA_{k,r}^{\ast}} \min[h, \dnu\left(f_{k+1}^{h, \{r\}}(t)-f_j^{h, \{r\}}(t)\right)] \right]\\
&\ge& \min_{\sA_{k,r}}  \left[ \sum_{j \in \sA_{k,r}} \min[h, \dnu\left(f_{k+1}^{h, \{r\}}(t)-f_j^{h, \{r\}}(t)\right)] \right]\\
&\ge& \min_{\sA_{k,r}}  \left[ \sum_{j \in \sA_{k,r}} \min[h, \dnu\left(f_{k}^{h, \{r\}}(t)-f_j^{h, \{r\}}(t)\right)] \right]\\
&= & \ddnu_k^{h, \{r\}}(\ddS,\mathbf{f}),
\end{eqnarray*}
where the replacement of $f_{k+1}$ by $f_k$ in the third line holds by the definition of $f_{k^{h, \{r\}}(t)}$ being a minimizer 
over all elements of $U$ distinct from $\{ f_j^{h, \{r\}}(t): 0 \le j \le k-1\}$.
Next we  may minimize the left side over all sets $\sA_{k+1,r}$, noting the right side remains constant; 
the left side minimum is $\ddnu_{k+1}^{h, \{r\}}(\ddS,\mathbf{f})$, by \eqref{eq:A2}. 
Combining these results yields \eqref{eq:weak-increase},
as asserted.
\end{proof}

\begin{remark}\label{rem:45}
Proposition \ref{prop:t-sequence-increasingA} implies that the (refined) generalized  integers $[n]_{\SSS, \T}^{h, \{r\}}$ are well-defined as integral ideals.
\end{remark}

%
%

\section{ Property $\PC$}\label{sec:propertyC}

The main result of the paper is a reduction from a commutative ring $\DR$ to
a formal power-series ring $\ddR[[t]]$ over a field $\ddR$ that preserves 
congruences $(\bmod \,\bb^k)$ to congruences $(\bmod \,(t)^k)$ for all $k \ge 1$.

%
%

\subsection{Mapping to ${\ddR[[t]]}$: Property C}\label{subsection:mapping}

Let $\RRR$ be a commutative ring and $\ddB$ a nonzero proper ideal of $\RRR$. We will show
that  a  sufficient condition 
 for  $\ddB$-sequencibility of $\RRR$ 
is the existence of a map $\varphi_\ddB:\RRR\rightarrow\ddR[[t]]$,
where $\ddR$ is 
a field, that has the following property.\medskip

%
%
\noindent\textbf{Property $\PC$.} \emph{For a commutative ring $\RRR$, we say a   map $\varphi_\ddB:\RRR\rightarrow\ddR[[t]]$ ,
where $\ddR$ is 
a field, satisfies
 \emph{Property $\PC(\DR,\ddB,\ddR)$} (or simply \emph{Property $\PC$}) if for all $a_1, a_2 \in \RRR$,
\begin{equation}\label{map-condition}
\varphi_\ddB\left(a_1\right)\equiv\varphi_\ddB\left(a_2\right)\pmod{t^k}\quad\text{if and only if}\quad a_1\equiv a_2\pmod{\ddB^k}
\end{equation}
for all $k\ge1$. Equivalently,  for all $a_1, a_2 \in \RRR$,
\begin{equation}\label{eqn:equal-valuations}
\dnu\left(\varphi_\ddB\left(a_1\right)-\varphi_\ddB\left(a_2\right)\right)=\nuB\left(a_1-a_2\right).
\end{equation}
}

 \medskip
%
%
\begin{rem}\label{rem:41}

(1) The identity \eqref{eqn:equal-valuations} 
that the map $\varphi_{\ddB}$
  is a congruence-preserving map between 
the rings $\RRR$ and $\ddR[[t]]$.
  The map $\varphi_{\ddB}(\cdot)$ may not  preserve  ring addition or ring  multiplication. 
That is, it need not be a homomorphism of the additive group structure, nor 
a semigroup morphism of
the multiplicative  monoid  $\RRR$.
 It need not preserve $0$ or $1$. 

(2)  Property  C does not require $R$ to be an integral domain. 
However if  $\bb$ is a nilpotent ideal, having  $\bb^{k}= (0)$, then  necessarily  the 
 map $\varphi$ must have  $\varphi_{\bb}(0)=0$. Consider $R= \ZZ/4\ZZ$, with $\ddB= (2)R$, and 
 Property C is satisfied with   $\ddR=\ZZ$, and $\varphi_{\ddB}(j + 4\ZZ) =j$ for $0 \le j \le 3.$

(2) The condition \eqref{map-condition} for all $k \ge 1$ implies   the map $\varphi_{\ddB}$ is injective, because
$\bigcap_{k=1}^{\infty} (t^k) \ddR[[t]] = (0)$.

\end{rem}

For $a\in\RRR$, denote by $\nuB(a)$ the supremum of all nonnegative integers $k$ such that $a\in\ddB^k$; i.e.,
$$
\nuB(a):=\sup\left\{k\in\mathbb{N}:a\in\ddB^k\right\} \bigcup \{+\infty\}.
$$
By convention  $\ddB^0=\RRR$. In addition $\nuB(0)= +\infty$ for all ideals.

\begin{rem}\label{rem:52}
We have the  following set  of correspondences under the 
congruence-preserving map $\varphi_{\bb}$; the last two lines are justified in 
Proposition \ref{prop:main-propertyC}
below. The map $\varphi_{\bb}$ converts a semi-valuation $\order_{\bb}$ into a valuation.

%
%
\begin{table}[h]
\begin{center}
\begin{tabular}{|c|c|c|}
\hline
Ring  &$\RRR$&$\ddR[[t]]$\\
\hline
Subset&$\SSS$&$\ddS= \varphi_{\bb}(\SSS)$\\
\hline
Ideal &$\ddB$&$t\ddR[[t]]$\\
\hline
$\bb$-ordering/ $t$-ordering &$\mathbf{a}=\left(a_i\right)_{i=0}^\infty$&$\mathbf{f}=\left(f_i(t)\right)_{i=0}^{\infty}=\left(\varphi_{\bb}(a_i)\right)_{i=0}^{\infty}$\\
\hline
Ordering  invariants&$\left(\ddnu_k^{h, \{r\}}\left(\SSS,\ddB \right)\right)_{k=0}^{\infty}$&$\left(\ddnu_k^{h, \{r\}}( \ddS,  t \ddR[[t]] )\right)_{k=0}^{\infty}$\\
\hline
\end{tabular}
\end{center}
\bigskip
\caption{Action of map $\varphi_{\bb}$}
\end{table}

\end{rem}

%
%

\subsection{Property $C(\RRR, \bb, \ddR)$
is sufficientf or $\bb$-sequenceability for all sets $\SSS$ for a fixed $\bb$}\label{section:B-orderings-definition}

%
%
\begin{prop}[$\ddB$-sequenceability of $\RRR$ via Property $\PC$]\label{prop:main-propertyC}
Given a commutative ring $\RRR$ and a nonzero proper ideal $\bb$ of $\RRR$.
Assume that $\varphi_\ddB:\RRR\rightarrow\ddR[[t]]$ satisfies Property $\PC(R, \bb, \ddR)$,
with $\ddR$ a field. 

(1)  For all  nonempty subsets $\SSS$ of $\RRR$, and   for all $\SSS$-test sequences $\mathbf{a}$, 
and  for all  $h \in \NN \cup \{\infty\}$, and all $r \in \NN$, the equality of $\SSS$-test-sequence invariants of $\mathbf{a}$
and $\varphi_{\ddB}(S)$-test sequence invariants of $\varphi_{\ddB}(\mathbf{a})$, 
 \begin{equation}\label{eqn:equal-inv}
\ivr_k^{h, \{r\}} \left(\SSS, \ddB, \mathbf{a}  \right)= \ivr_k^{h,\{r\}} \left( \varphi_{\ddB}(S), t\ddR[[t]], \varphi_{\ddB}(\mathbf{a})\right),
\end{equation} 
holds for all $k \ge 1$.

(2) 
$\RRR$ is $\bb$-sequenceable. 
 In addition, for  all nonempty  subsets $\SSS$ of $\RRR$,  all $h \in \NN \cup \{\infty\}$, and all $r \in \NN$,  
 \begin{equation}\label{eqn:equal-inv0}
\alpha_k^{h, \{r\}} \left(\SSS, \ddB \right)= \alpha_k^{h,\{r\}} ( \varphi_{\ddB}(S), ( t )\ddR[[t]]),
\end{equation} 
holds for all $k \ge 1$.
\end{prop}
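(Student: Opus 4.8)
The plan is to prove both parts by transporting the entire combinatorial structure of refined $\bb$-orderings across the map $\varphi_{\ddB}$ using only the valuation identity \eqref{eqn:equal-valuations}, and then invoking Bhargava's well-definedness result for $\ddR[[t]]$, Theorem \ref{thm:T-invariant}, as a black box. For part (1), I would observe that each refined $\bb$-exponent invariant $\ivr_k^{h, \{r\}}(\SSS, \ddB, \mathbf{a})$ in Definition \ref{def:associated} is built purely out of the integers $\nuB(a_n - a_i)$ for $i < n$, through the operations $\min(h, \cdot)$, summation over an index set $A$, and minimization over the subsets $A$ of $\{0, \dots, n-1\}$ of cardinality $n-r$. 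Writing $f_i = \varphi_{\ddB}(a_i)$, Property $\PC$ gives $\dnu(f_n - f_i) = \nuB(a_n - a_i)$ for every pair, hence $\min(h, \dnu(f_n - f_i)) = \min(h, \nuB(a_n - a_i))$ term by term. Since the same index sets $A$ are used on both sides, the two minimizing expressions coincide exactly, which is \eqref{eqn:equal-inv}. (The sequence $\varphi_{\ddB}(\mathbf{a})$ is a legitimate $\varphi_{\ddB}(\SSS)$-test sequence, being an infinite sequence of elements of $\varphi_{\ddB}(\SSS)$.)

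For part (2), the key step is to show that $\varphi_{\ddB}$ carries a $\bb$-ordering of $\SSS$ (of any of the four types) to a $t$-ordering of $\varphi_{\ddB}(\SSS)$ of the same type. The defining property of $\mathbf{a}$ being a $\bb$-ordering is that at each step $n$ the chosen $a_n$ (together with its index set $A_{n,r}$) attains the global minimum over all $s \in \SSS$ of the relevant expression. By Property $\PC$, for any $s \in \SSS$ one has $\dnu(\varphi_{\ddB}(s) - f_i) = \nuB(s - a_i)$, so the per-candidate values agree. Because condition \eqref{map-condition} holds for all $k \ge 1$, the map $\varphi_{\ddB}$ is injective (Remark \ref{rem:41}), hence restricts to a bijection $\SSS \to \varphi_{\ddB}(\SSS)$; consequently minimizing over $s \in \SSS$ is identical to minimizing over $g \in \varphi_{\ddB}(\SSS)$, and $a_n$ attains the global minimum on the $\RRR$-side exactly when $f_n$ attains it on the $\ddR[[t]]$-side. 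Thus $\varphi_{\ddB}(\mathbf{a})$ is a $t$-ordering of the stated type.

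Finally, I would combine these two steps. Given two $\bb$-orderings $\mathbf{a}_1, \mathbf{a}_2$ of $\SSS$ of the same type, their images are two $t$-orderings of $\varphi_{\ddB}(\SSS)$ of that type; by Theorem \ref{thm:T-invariant} they produce the same $t$-exponent sequence, and by part (1) this common sequence equals both $\ivr_k^{h, \{r\}}(\SSS, \ddB, \mathbf{a}_1)$ and $\ivr_k^{h, \{r\}}(\SSS, \ddB, \mathbf{a}_2)$. Hence $\RRR$ is $\bb$-sequenceable, and the common invariant is $\ivr_k^{h, \{r\}}(\varphi_{\ddB}(\SSS), t\ddR[[t]])$, which is \eqref{eqn:equal-inv0}. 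I do not anticipate a serious obstacle: the argument is a clean transport of structure, and the only points requiring care are verifying that the $\min(h, \cdot)$ truncation and the $r$-removed minimization are preserved term by term (immediate from the exact valuation match in Property $\PC$, not merely an inequality) and that injectivity yields the bijection needed so that the two global minimizations over $\SSS$ and $\varphi_{\ddB}(\SSS)$ coincide.
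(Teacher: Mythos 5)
Your proposal is correct and follows essentially the same route as the paper: use the exact valuation identity of Property $\PC$ to get term-by-term equality of test-sequence invariants, observe that $\varphi_{\ddB}$ therefore carries $\bb$-orderings of $\SSS$ to $t$-orderings of $\varphi_{\ddB}(\SSS)$ of the same type, and then invoke Theorem \ref{thm:T-invariant} as a black box to transfer well-definedness back. The only cosmetic difference is that you compare two $\bb$-orderings explicitly while the paper shows each one's invariant equals the fixed quantity $\alpha_k^{h,\{r\}}(\varphi_{\ddB}(\SSS), (t)\ddR[[t]])$; these are the same argument.
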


%
%
\begin{proof}

(1) Set $U=\varphi_{\ddB}(\SSS) \subseteq \ddR[[t]]$.
The map $\varphi_{\bb}(\cdot)$ maps $\SSS$-test sequences $\mathbf{a}$  for $\bb$ to $U$-test sequences for $t \ddR[[t]]$,\
where $U = \varphi_{\bb} (\SSS)$.
By definition for all $a_1, a_2 \in \SSS$, 
\begin{equation}\label{eqn:equal-valuations2}
\dnu\left(\varphi_\ddB\left(a_1\right)-\varphi_\ddB\left(a_2\right)\right)=\nuB\left(a_1-a_2\right).
\end{equation}
hence \eqref{eqn:equal-inv0} follows for all $\SSS$-test sequences.

(2) Now specialize to $\mathbf{a}^{h, \{r\}}$ being  a order $h$, $r$-removed
$\bb$-ordering of $\SSS$. 
The assertion that $\varphi_{\ddB}(\mathbf{a}^{h, \{r\}})$ is an  $r$-removed  $\tp$-ordering of 
$U=\varphi_{\ddB}(S) \subseteq \ddR[t]$ follows from Property $\PC(R, \bb, \ddR)$,
 since for all $a' \in S$ and all $j \ge 1$,   \eqref{eqn:equal-valuations} yields
\begin{equation*}
\dnu \left( \varphi_{\ddB}(a' ) - \varphi_{\ddB} (a_j^{h, \{r\}}) \right)=  \nuB \left( a' - a_j^{h,\{r\}}  \right).
\end{equation*}
Now $\ddR[[t]]$ is a Dedekind domain, so
by (Bhargava's) Theorem \ref{thm:T-invariant}  the ring $\ddR[[t]]$ is $\tp$-sequenceable, so we have:
\begin{eqnarray*}
\alpha_k^{h, \{r\}}(U ,  (t)   \ddR[[t]) &=& 
\alpha_k^{h, \{r\}}\left(\varphi_{\ddB}(S) ,  (t)   \ddR[[t]], \varphi_{\ddB}(\mathbf{a}^{h, \{r\}}) \right)  \\
&= &
\min_{\sA_{k,r}}\left[\sum_{j \in \sA_{k,r}} 
\min[h,\dnu\left(\varphi_{\ddB}(a_k^{h,\{r\}})-\varphi_{\ddB}(a_j^{h, \{r\}}\right)]  \right] \\
&=&\min_{\sA_{k,r}}\left[\sum_{j \in \sA_{k,r}} 
\min[h,\nuB\left(a_k^{h,\{r\}}-a_j^{h, \{r\}}\right)]  \right] \\
&=& \alpha_k^{h, \{r\}} (\SSS, \bb, \mathbf{a}^{h, \{r\}}),
\end{eqnarray*}
where the second equality used $\varphi_{\ddB}(\mathbf{a}^{h, \{r\}})$ being an  $r$-removed  $\tp$-ordering of $U$
and the 
 thind equality used Property $\PC(R, \bb, \ddR)$. 

We have shown that
\begin{equation}\label{eqn:independence-of-a}
 \alpha_k^{h, \{r\}} (\SSS, \bb, \mathbf{a}^{h, \{r\}})= \alpha_k^{h, \{r\}}(U ,  (t)   \ddR[[t])
\end{equation} 
is independent of the choice of order $h$, $r$-removed $\bb$-sequence $\mathbf{a}^{h, \{r\}}$ of $\SSS$.
The equality holds for  all nonempty sets $\SSS$, therefore  the ring $\RRR$ is $\ddB$-sequenceable
 for  $r$-removed, order $h$ $\bb$-orderings.
The left side of \eqref{eqn:independence-of-a}  now  defines  $\alpha_k^{h, \{r\}} \left(\SSS, \ddB \right)$, hence
\begin{equation*}
\alpha_k^{h, \{r\}} \left(\SSS, \ddB \right)= \alpha_k^{h, \{r\}} ( U, ( t )\ddR[[t]])
\end{equation*} 
holds for all $k \ge 1$, concluding the proof of (2).
\end{proof} 

\begin{rem}\label{rem:54}
The  definition of  congruence-preserving Property $\PC(\DR, \ddB, \DDS)$
 makes sense  for a target formal
power-series  ring $\DDS [[t]]$ whose coefficient ring  $\DDS$ is an arbitrary commutative ring.
For rings $\DDS$ for which the statement of Theorem \ref{thm:T-invariant} remains valid  (see Remark \ref{rem:43})
one would get a generalization of Proposition \ref{prop:main-propertyC} for $\ddB$-sequenceability
 allowing congruence-preserving maps having
Property $\PC(\DR, \ddB, \DDS)$.
\end{rem}

%
%
\subsection{Properties of associated $\bb$-exponent sequences}\label{section:B-orderings-properties}

By Proposition \ref{prop:main-propertyC}  
$\bb$-exponent 
sequences inherit all the properties of 
$\tp$-exponent sequences. 

%
%
\begin{corollary}[Local binomial coefficient inequality] \label{corollary:binomial-B}
Assume for a commutative  ring $\RRR$  with a nonzero proper ideal $\bb$ that
there is given a map $\varphi_\ddB:\RRR\rightarrow\ddR[[t]]$ satisfying 
Property $\PC(R, \bb, \ddR)$. Then for any subset $\SSS$ of $\RRR$ and any nonnegative integers $k$ and $\ell$, we have
$$
\ddnu_{k+\ell}^{h, \{r\}}\left(\SSS,\ddB\right)\ge\ddnu_k^{h, \{r\}}\left(\SSS,\ddB\right)+\ddnu_\ell^{h,\{r\}}\left(\SSS,\ddB\right).
$$
\end{corollary}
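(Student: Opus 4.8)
The plan is to transfer the entire problem to the power series ring $\ddR[[t]]$, where the desired inequality is exactly Bhargava's local binomial inequality, Theorem \ref{thm:bhar-integral}. First I would set $U = \varphi_\ddB(\SSS) \subseteq \ddR[[t]]$, which is nonempty since $\SSS$ is nonempty and $\varphi_\ddB$ is defined on all of $\RRR$. The key input is the congruence-preserving transfer already established in Proposition \ref{prop:main-propertyC}(2): under Property $\PC(\RRR,\ddB,\ddR)$, equation \eqref{eqn:equal-inv0} gives, for every $n \ge 0$, the identity
$$
\ddnu_n^{h, \{r\}}(\SSS, \ddB) = \ddnu_n^{h, \{r\}}(U, (t)\ddR[[t]]).
$$
Applying this with $n = k$, $n = \ell$, and $n = k+\ell$ reduces the claim to the corresponding statement for the subset $U$ relative to the prime ideal $(t)\ddR[[t]]$.

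That statement is precisely Theorem \ref{thm:bhar-integral}, applied to the nonempty subset $U$ of $\ddR[[t]]$, which asserts
$$
\ddnu_{k+\ell}^{h, \{r\}}(U, (t)\ddR[[t]]) \ge \ddnu_k^{h, \{r\}}(U, (t)\ddR[[t]]) + \ddnu_\ell^{h, \{r\}}(U, (t)\ddR[[t]]).
$$
Substituting the three transfer identities back into this inequality yields the assertion of the corollary directly. The boundary cases $k=0$ or $\ell=0$ are immediate, since $\ddnu_0^{h, \{r\}}(\SSS,\ddB)=0$ by the same transfer (or directly, as the empty-product convention forces the $0$-th term to vanish).

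I expect no genuine obstacle here: the corollary is a two-step composition of results already in hand. All the substantive work lives upstream, in Proposition \ref{prop:main-propertyC}, which moves the valuation-theoretic data from the semi-valuation $\order_\ddB$ on $\RRR$ to the genuine discrete valuation $\dnu$ on $\ddR[[t]]$, and in Theorem \ref{thm:bhar-integral}, which is Bhargava's superadditivity statement for the refined $t$-exponent sequences over a field power series ring. The only point requiring a moment's care is to confirm that Proposition \ref{prop:main-propertyC} applies uniformly to all three indices $k,\ell,k+\ell$ and for the same fixed parameters $h \in \NN \cup \{+\infty\}$ and $r \in \NN$, which it does since the identity \eqref{eqn:equal-inv0} holds for every $k \ge 1$ and for arbitrary such $(h,r)$.
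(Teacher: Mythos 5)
Your proposal is correct and is essentially identical to the paper's own proof: the paper likewise applies Theorem \ref{thm:bhar-integral} to $\ddS = \varphi_\ddB(\SSS)$ and then transfers the resulting inequality back via the invariant identity of Proposition \ref{prop:main-propertyC}. The only cosmetic difference is that you cite the invariant-level identity \eqref{eqn:equal-inv0} while the paper cites \eqref{eqn:equal-inv}, and your explicit handling of the $k=0$ or $\ell=0$ boundary cases is a harmless extra.
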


\begin{proof}
Applying Theorem \ref{thm:bhar-integral} with $\ddS=\varphi_\ddB\left(\SSS\right)$, we obtain the inequality
$$
\ddnu_{k+\ell}^{h, \{r\}}\left(\varphi_\ddB\left(\SSS\right), (t)\ddR[[t]] \right)\ge 
\ddnu_k^{h,\{r\}}\left(\varphi_\ddB\left(\SSS\right), (t) \ddR[[t]]\right)+\ddnu_\ell^{h,\{r\}}\left(\varphi_\ddB\left(\SSS\right), (t)\ddR[[t]] \right).
$$
By  \eqref{eqn:equal-inv} in Proposition \ref{prop:main-propertyC}  , the above yields 
$\ddnu_{k+\ell}^{h, \{r\}}\left(\SSS,\ddB\right)\ge\ddnu_k^{h,\{r\}}\left(\SSS,\ddB\right)+\ddnu_\ell\left(\SSS,\ddB\right)$.
\end{proof}

%
%
\begin{corollary}\label{cor:k-increasing}
Assume  for a commutative ring $\RRR$  with a nonzero  proper ideal $\bb$ 
 there is given a map $\varphi_\ddB:\RRR\rightarrow\ddR[[t]]$ satisfying Property $\PC(\RRR, \bb, \ddR)$. 
 Then for any subset $\SSS$ of $\RRR$ the four associated $\ddB$-exponent sequences 
 $\left(\ddnu_k \left(\SSS,\ddB\right)\right)_{k=0}^\infty$, resp. $\left(\ddnu_k^{\{r\}}\left(\SSS,\ddB\right)\right)_{k=0}^\infty$, resp. 
 $\left(\ddnu_k^{h}\left(\SSS,\ddB\right)\right)_{k=0}^\infty$, and resp. $\left(\ddnu_k^{h, \{r\}} \left(\SSS,\ddB\right)\right)_{k=0}^\infty$,
are all nondecreasing.
\end{corollary}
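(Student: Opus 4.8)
The plan is to reduce Corollary \ref{cor:k-increasing} to the monotonicity result already established for $t$-exponent sequences in the power series ring, namely Proposition \ref{prop:t-sequence-increasingA}, by transporting the statement through the congruence-preserving map $\varphi_\ddB$. Since Property $\PC(\RRR,\bb,\ddR)$ is assumed, Proposition \ref{prop:main-propertyC} gives the key bridge: for any nonempty subset $\SSS$ of $\RRR$ and the image set $U = \varphi_\ddB(\SSS) \subseteq \ddR[[t]]$, the associated $\bb$-exponent sequence invariants coincide with the $t$-exponent sequence invariants of $U$, that is, $\ivr_k^{h,\{r\}}(\SSS,\ddB) = \ivr_k^{h,\{r\}}(U, (t)\ddR[[t]])$ for all $k$. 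The same equality specializes to each of the four sequence types (plain, $r$-removed, order $h$, and $r$-removed of order $h$) by taking the appropriate parameter values $r=0$ and/or $h=+\infty$.

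First I would fix an arbitrary nonempty subset $\SSS$ of $\RRR$ and set $U = \varphi_\ddB(\SSS)$. Next I would invoke Proposition \ref{prop:t-sequence-increasingA} applied to the set $U \subseteq \ddR[[t]]$, which states that for any fixed $r$-removed $t$-ordering of order $h$ of $U$, the sequence $\bigl(\ddnu_k^{h,\{r\}}(U, (t)\ddR[[t]])\bigr)_{k=0}^\infty$ is nondecreasing; since these invariants are well-defined (independent of the chosen $t$-ordering) by Theorem \ref{thm:T-invariant}, the nondecreasing property holds for the invariant itself. Then I would transfer this back to $\RRR$ using the equality of invariants from Proposition \ref{prop:main-propertyC}, obtaining
\begin{equation*}
\ddnu_{k+1}^{h,\{r\}}(\SSS,\ddB) = \ddnu_{k+1}^{h,\{r\}}(U, (t)\ddR[[t]]) \ge \ddnu_k^{h,\{r\}}(U, (t)\ddR[[t]]) = \ddnu_k^{h,\{r\}}(\SSS,\ddB)
\end{equation*}
for all $k \ge 0$, which is exactly the claimed monotonicity. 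Finally, to cover all four named sequences simultaneously, I would note that each arises as the specialization of the general $r$-removed, order $h$ invariant at the parameter choices $(r,h)=(0,+\infty)$, $(r, +\infty)$, $(0,h)$, and $(r,h)$ respectively, so a single application of the argument dispatches all four cases.

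This proof is essentially a formal transport argument, so I do not expect a serious obstacle; the substantive monotonicity content is already carried by Proposition \ref{prop:t-sequence-increasingA}, whose proof handles the genuine combinatorial step of restricting an optimal $(k{+}1{-}r)$-element index subset to a $(k{-}r)$-element subset and comparing against the minimum at stage $k$. The only point requiring mild care is ensuring that Proposition \ref{prop:t-sequence-increasingA} is stated for the $t$-ordering invariant of $U$ rather than for a $\bb$-ordering of $\SSS$ directly — but since $\varphi_\ddB$ carries $\bb$-orderings of $\SSS$ to $t$-orderings of $U$ (as verified inside the proof of Proposition \ref{prop:main-propertyC}), and the invariants agree, this transfer is immediate. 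Thus the corollary follows directly by combining Proposition \ref{prop:main-propertyC} with Proposition \ref{prop:t-sequence-increasingA}.
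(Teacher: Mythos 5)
Your proposal is correct and follows essentially the same route as the paper: apply Proposition \ref{prop:t-sequence-increasingA} to the image set $\varphi_\ddB(\SSS)$ and transport the monotonicity back through the equality of invariants \eqref{eqn:equal-inv} from Proposition \ref{prop:main-propertyC}. The only cosmetic difference is that you argue the general $(h,\{r\})$ case and specialize to the four named sequences, while the paper writes out the plain case and notes the identical argument covers the rest.
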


\begin{proof}
We indicate the proof for ``plain" $\bb$-exponent sequences; the same proof holds in the other three cases.
According to Lemma \ref{prop:t-sequence-increasingA} with $\ddS=\varphi_\ddB\left(\SSS\right)$, 
the associated $\tp$-sequence $\left(\ddnu_k \left(\varphi_\ddB\left(\SSS\right)\right)\right)_{k=0}^\infty$ is weakly increasing.
Now by  \eqref{eqn:equal-inv} in Proposition \ref{prop:main-propertyC}
this  sequence agrees term by term with  the  $\ddB$-sequence $\left(\ddnu_k\left(\SSS,\ddB\right)\right)_{k=0}^\infty$.
 giving the result.
\end{proof}

%
%
\section{Dedekind domains have  Property $\PC$}\label{sec:5} 


In this section we establish Property C for all nonzero proper ideals $\bb$ in a Dedekind domain $\DD$, 
one ideal at a time, using separate maps $\varphi_{\bb}(\cdot)$ for each  ideal $\bb$, see
Theorem \ref{thm:varphi_B-general}. 
 The special case  $\DD=\ZZ$ was previously  treated by the authors in\cite{LY:24a}.

%
%

\subsection{Digit expansions to base $\beta$
for a principal ideal $(\beta)$ in a  Dedekind domain}\label{subsec:proofs-DE}

We recall properties of  Dedekind domains $\DD$.   
A  {\em fractional ideal $\ff$}  of a Dedekind domain $\DD$ is any  $\DD$-module inside its quotient field $K$
having the property there is a nonzero element $\alpha \in \DD$
with $\frac{1}{\alpha} \ff \subset \DD$ is an ideal of $\DD$.
 Dedekind domains have  the property that 
all nonzero fractional ideals are invertible.
A Dedekind domain  has  
an associated ideal class group $\Cl(\DD)$,
which is the quotient of the  group of 
nonzero fractional ideals  by the group of principal nonzero fractional ideals.
 
 Dedekind domains
can be very large, there exist Dedekind domains of  any cardinality.
The ideal class group can be arbitrarily complicated;
 In 1966  Claborn (\cite{Clab:66})
 showed that every abelian group (of any cardinality)
can occur as the ideal class group of some Dedekind domain.  In 1975
Leedham-Green \cite{LeedG:72} showed  that every abelian group occurs
among class groups  of  Dedekind domains 
that are  quadratic extensions of a principal ideal domain.

%
%

\begin{lemma} \label{lem:Ded-congruence}
For any proper ideal $\bb$ in a Dedekind domain, there holds
\begin{equation*}
\bigcap_{k \ge 1} \bb^k = (0).
\end{equation*}
\end{lemma}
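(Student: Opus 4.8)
The plan is to reduce to the unique factorization of ideals that characterizes Dedekind domains. First I would dispose of the trivial case $\bb = (0)$: here $\bb^k = (0)$ for every $k \ge 1$, so the intersection is $(0)$ immediately. Thus I may assume $\bb$ is a \emph{nonzero} proper ideal.

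Next I would invoke the fundamental structure theorem for Dedekind domains: every nonzero ideal factors uniquely as a finite product of powers of distinct nonzero prime ideals. Write $\bb = \pp_1^{e_1} \cdots \pp_g^{e_g}$ with each $e_i \ge 1$; since $\bb \ne \DD$ is proper we have $g \ge 1$, so at least one prime $\pp_1$ genuinely occurs. Raising to the $k$-th power gives $\bb^k = \pp_1^{k e_1} \cdots \pp_g^{k e_g}$.

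Now take any $x \in \bigcap_{k \ge 1} \bb^k$ and suppose for contradiction that $x \ne 0$. Then $(x) \subseteq \bb^k$ for every $k$, and in a Dedekind domain containment of ideals coincides with reverse divisibility, so $\bb^k \mid (x)$ for all $k$. Comparing $\pp_1$-adic valuations yields $\order_{\pp_1}((x)) \ge k e_1$ for every $k \ge 1$. But $(x)$ is a nonzero ideal and hence has a finite prime factorization, so $\order_{\pp_1}((x))$ is a finite nonnegative integer; letting $k \to \infty$ with $e_1 \ge 1$ produces a contradiction. Therefore $x = 0$, and $\bigcap_{k \ge 1} \bb^k = (0)$.

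The argument has no genuine obstacle beyond correctly citing the Dedekind structure theory; the one point to state carefully is that containment equals reverse divisibility, equivalently that the $\pp$-adic order function is finite on every nonzero element, which is precisely the finiteness of prime factorizations. Alternatively, one could deduce the lemma from the Krull intersection theorem: since $\DD$ is Noetherian, $J := \bigcap_{k \ge 1} \bb^k$ is finitely generated and satisfies $\bb J = J$, so the determinant trick gives $(1 + b) J = 0$ for some $b \in \bb$; as $\DD$ is a domain, $J \ne 0$ would force $1 + b = 0$, i.e. $1 \in \bb$, contradicting properness, whence $J = 0$. I would present the factorization proof as primary, since it is self-contained within the Dedekind framework already in use.
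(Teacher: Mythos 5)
Your proof is correct and follows essentially the same route as the paper's own: both rest on unique prime factorization of ideals in a Dedekind domain, the equivalence of containment with reverse divisibility, and the finiteness of the valuation of a nonzero principal ideal at any prime. The only cosmetic differences are that you factor $\bb$ and compare $\pp_1$-adic valuations of $(x)$, while the paper factors $(a)$ and works with a single prime divisor $\qq$ of $\bb$, and that you explicitly dispose of the case $\bb=(0)$, which the paper's proof tacitly skips by assuming $\bb$ nonzero (your Krull intersection remark is a nice alternative but inessential).
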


\begin{proof}
Let $a \in \DD$ be  an arbitrary nonzero element of $\DD$.  It suffices to show $a \not\in \bigcap_{k \ge 1} \bb^k.$
Let $a \DD= \prod_{j} \pp_j^{e_j}$ be the unique finite prime ideal factorization of $(a)$, and set $e = \max_{j} (e_j) < \infty.$
 Since $\bb$ is a nonzero proper ideal it is divisible by some prime ideal $\qq$, so $\bb^{e+1}$ is divisible by $\qq^{e+1}$.
 Now $\qq^{e+1} \nmid a \DD$, by unique prime factorization in Dedekind domains. 
 
 We recall that  ideals in Dedekind domains have the special property:  $\aaa_2 \mid \aaa_1$
 if and only if
   $\aaa_1 \subseteq \aaa_2$.  (For all commutative rings  $\aaa_2 \mid \aaa_1$
implies $\aaa_1 \subseteq \aaa_2$.) 
   
   Applying this equivalence (in contrapositive form: $\aaa_1 \not\subseteq \aaa_2$ if and only if $\aaa_2 \nmid \aaa_1$)
    we deduce $a \DD \not\subseteq \qq^{e+1}.$ By the same equivalence $\qq^{e+1} \mid \bb^{e+1}$ implies $\bb^{e+1} \subseteq \qq^{e+1}$.
   Now if we have   $a \in \bb^{e+1}$,  then $a \DD \subseteq \bb^{e+1}\subseteq \qq^{e+1}$,  a contradiction. We conclude
   $a \not\in \bb^{e+1}$, whence  $a \not\in \bigcap_{k \ge 1} \bb^k$.
\end{proof} 

The next lemma treats digit expansions for principal ideals of $\DD$.
%
%
\begin{lemma}\label{lem:Ded-digit-exp}
Let $\DD$ be a Dedekind domain and $(\beta)$ a principal ideal in $\DD$. A  ``digit set" 
$\sD_{\beta} = \{ a_{[\alpha]} : \, [\alpha] \in \DD/ \beta \DD\} \subset \DD$  is  any  set of coset representatives $a_{[\alpha]} \in \DD$.
of the cosets $[\alpha]= \alpha_{[\alpha]}  + \beta \DD$ in the quotient ring $\DD/\beta \DD$. 
Consider a digit set $\sD_{\beta}$ having $d_{[0]}=0$ for the representative of the identity coset.
Then for each $a \in \DD$ there exists a  unique (formal) digit expansion $a= (d_0, d_1, d_2, \cdots)$, with each $d_j \in \sD_{\beta}$,
specified by the infinite system
of congruences $(\bmod \, \beta^k)$, 
\begin{equation}\label{eqn:digit-exp}
a \equiv \sum_{i=0}^{k-1} d_i \beta^i \,\, (\bmod \, \beta^k),
\end{equation}
holding for all $k \ge 1$.
\end{lemma}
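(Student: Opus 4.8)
The plan is to prove existence and uniqueness of the digit sequence separately, both by induction on the level $k$ of the congruence. The two facts I would isolate at the outset are: first, that the principal ideal $\beta^k \DD$ is exactly the set of multiples $\{\beta^k x : x \in \DD\}$, so that membership $y \in \beta^k\DD$ lets me write $y = \beta^k x$ with $x$ uniquely determined; and second, that $\DD$ is an integral domain with $\beta \neq 0$, so multiplication by $\beta^k$ is injective (cancellation). Both are elementary but are the engine of the whole argument. Note that the hypothesis that $\sD_\beta$ is a transversal means each coset of $\beta\DD$ in $\DD$ contains exactly one element of $\sD_\beta$, so a congruence $d \equiv d' \pmod{\beta}$ between two digits forces $d = d'$.

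For existence I would construct the digits recursively. Set $a^{(0)} = a$, and having produced an element $a^{(k)} \in \DD$, let $d_k \in \sD_\beta$ be the unique digit representing the coset $a^{(k)} + \beta\DD$, so that $a^{(k)} - d_k \in \beta\DD$; then define $a^{(k+1)}$ by $a^{(k)} - d_k = \beta\, a^{(k+1)}$, which is legitimate since $\beta\DD$ is the set of $\beta$-multiples. A straightforward induction then shows $a - \sum_{i=0}^{k-1} d_i\beta^i = \beta^k a^{(k)}$: indeed, assuming this at stage $k$, one computes $a - \sum_{i=0}^{k} d_i\beta^i = \beta^k a^{(k)} - d_k\beta^k = \beta^k(a^{(k)} - d_k) = \beta^{k+1} a^{(k+1)} \in \beta^{k+1}\DD$. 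Since $\beta^k a^{(k)} \in \beta^k\DD$, this yields the desired congruence $a \equiv \sum_{i=0}^{k-1} d_i\beta^i \pmod{\beta^k}$ for every $k \ge 1$.

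For uniqueness, suppose $(d_j)$ and $(d_j')$ both satisfy the infinite system, and argue $d_j = d_j'$ by induction on $j$. Taking $k=1$ gives $d_0 \equiv a \equiv d_0' \pmod{\beta}$, so $d_0 = d_0'$ because both lie in the transversal $\sD_\beta$. Assuming $d_i = d_i'$ for $i < k$, subtract the two level-$(k+1)$ congruences to get $\sum_{i=0}^{k}(d_i - d_i')\beta^i \in \beta^{k+1}\DD$, which collapses to $\beta^k(d_k - d_k') \in \beta^{k+1}\DD$. Writing $\beta^k(d_k - d_k') = \beta^{k+1} c$ and cancelling $\beta^k$ in the domain $\DD$ gives $d_k - d_k' = \beta c \in \beta\DD$, i.e. $d_k \equiv d_k' \pmod{\beta}$, whence $d_k = d_k'$ again by the transversal property.

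The only genuinely delicate point is the cancellation step in the uniqueness argument, and it is exactly here that the integral-domain hypothesis (with $\beta \neq 0$) is indispensable: the congruence $\beta^k(d_k - d_k') \in \beta^{k+1}\DD$ must be reduced to a congruence modulo $\beta$, and without cancellation one could not strip off the factor $\beta^k$. I would remark that Lemma~\ref{lem:Ded-congruence} is \emph{not} needed for this lemma itself, which concerns a single fixed $a$; rather, $\bigcap_{k\ge1}\beta^k = (0)$ is what guarantees that the assignment $a \mapsto (d_0,d_1,d_2,\dots)$ is injective, a fact to be used downstream when the digit expansion is assembled into the congruence-preserving map $\varphi_{\bb}$.
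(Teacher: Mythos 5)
Your proof is correct, and the existence half is essentially identical to the paper's: the same recursive peeling-off of digits, writing $a^{(k)} - d_k = \beta\, a^{(k+1)}$ and checking $a - \sum_{i=0}^{k-1} d_i\beta^i = \beta^k a^{(k)}$ by induction. Where you genuinely diverge is uniqueness. You prove the literal assertion of the lemma --- for a \emph{fixed} $a$, the digit string satisfying \eqref{eqn:digit-exp} is unique --- by induction on the digit index, using the transversal property of $\sD_\beta$ together with cancellation of $\beta^k$ in the integral domain. The paper instead disposes of ``uniqueness'' in one line by citing Lemma \ref{lem:Ded-congruence}: if $a_1$ and $a_2$ have the same expansion, then $a_1 - a_2 \in \bigcap_{k\ge1}\beta^k\DD = (0)$. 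As you correctly point out, that argument establishes \emph{injectivity} of the map $a \mapsto (d_0, d_1, d_2, \dots)$, which is a different statement; as written it does not rule out a single $a$ admitting two distinct digit strings, so your induction-plus-cancellation step is exactly what is needed to prove the uniqueness actually claimed. Your closing remark is also on point: $\bigcap_{k\ge1}\beta^k\DD = (0)$ is what is used downstream, for the injectivity of $\varphi_{(\beta)}$ and $\varphi_{\bb}$ in Lemma \ref{lem:Ded-digit-exp2} and Theorem \ref{thm:varphi_B-general}, not for the present lemma. One small further merit of your write-up is that you make explicit the hypothesis $\beta \neq 0$ needed for cancellation (and, in the paper's existence step, for the uniqueness of $a_{k+1}$); the lemma as stated allows $\beta = 0$, in which case digits beyond index $0$ are unconstrained, so the nonzero assumption --- harmless in context, since the construction is only applied to nonzero proper ideals --- should really be recorded.
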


{\em Remark.}
We denote 
 a digit expansion for $a \in \DD$ by: $a \sim \sum_{k=0}^{\infty} d_j \beta^j$ having each $d_i \in \sD_{\beta}$.
There is no converse: in general,  an expansion with given digits
 $(d_1, d_1, d_2,\cdots) \in \sD_{\beta}^{\NN}$ might not correspond to any element of $\DD$.

\begin{proof}
For existence of an expansion \eqref{eqn:digit-exp}, we proceed by induction on $k \ge 1$, assuming as induction hypothesis 
the existence for $k$ of a partial expansion of form
$$
a = \sum_{i=0}^{k-1} d_i \beta^i  + a_{k} \beta^k
$$
with $a_k \in \DD$. 

Given $a \in \DD$ pick 
$d_0 \in \sD$ to be the unique coset representative in $\sD$ of $a+ \DD$. 
Then we set 
$a_1^{'} := a- d_0 \in \beta \DD$, and we may write  
 $a_1^{'}= \beta a_1,$ for a unique $a_1 \in \DD$. Then
 $ a_0 = d_0 + a_1 \beta$, which gives  the base case $k=1$ of the induction. 
 
 For the induction step we are given the remainder term $a_k\beta^k$.
 Choose digit $d_{k}\in \sD$ to be the coset representative in $a_k +\DD$, 
 set $a_{k+1}^{'} := a_k - d_k \in \beta \DD$ and define
 $a_{k+1}$ uniquely by requiring  $a_{k+1}^{'} = a_{k+1} \beta$. Then we have
 $$
 a = \sum_{i=0}^{k} d_i \beta_i + a_{k+1} \beta^{k+1},
 $$
 completing the induction step.
  
The uniqueness of the expansion \eqref{eqn:digit-exp}
follows from Lemma \ref{lem:Ded-congruence}.
If $a_1$ and $a_2$ have the same expansion, then $a_1 \equiv a_2 \, (\bmod \, \beta^k)$ for all $k \ge 1$,
whence $a_1- a_2 \in \cap_{k \ge1} \beta^k \DD =\{0\}.$
\end{proof} 

%
%

\subsection{Property C holds for principal ideal domains}\label{subsec:proofs-PID}

We construct a suitable map $\varphi_{\bb}: \DD \to \DD[[t]] \subset K [[t]]$ giving Property $\PC(\DD, \bb, \ddR)$
in the case that $\bb = \beta \DD$ is a principal ideal of the Dedekind domain, using a 
power-series digit expansion 
to base $\beta$. For  $\DD=\ZZ$, such an expansion was given constructively
in the proof of Proposition 5.1
of  \cite{LY:24a}. 


%
%
\begin{lemma}\label{lem:Ded-digit-exp2}
For a Dedekind domain $\DD$ with quotient field $K$ and principal ideal $(\beta)$, Property $\PC(\DD, (\beta), K)$ holds for  
$\varphi_{(\beta)}: \DD  \to \DD[[t]] \subset \KKK[[t]]$ given by 
$\varphi_{(\beta)}(a) = \sum_{k=0}^{\infty} d_k t^k$
where $a \sim \sum_{k=0}^{\infty} d_k \beta^k$ is the digit expansion with respect to  a fixed digit set $\sD$
given by Lemma \ref{lem:Ded-digit-exp}.
\end{lemma}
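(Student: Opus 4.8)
The plan is to verify the valuation form of Property $\PC$ directly, namely that $\dnu\bigl(\varphi_{(\beta)}(a_1)-\varphi_{(\beta)}(a_2)\bigr)=\order_{(\beta)}(a_1-a_2)$ for all $a_1,a_2\in\DD$. Writing the digit expansions as $a_1\sim\sum_k d_k^{(1)}\beta^k$ and $a_2\sim\sum_k d_k^{(2)}\beta^k$ with each digit in $\sD$, the image difference is the power series
\[
\varphi_{(\beta)}(a_1)-\varphi_{(\beta)}(a_2)=\sum_{k\ge 0}\bigl(d_k^{(1)}-d_k^{(2)}\bigr)t^k,
\]
so its $t$-valuation $\dnu$ is exactly the smallest index $k$ at which the two digit strings first disagree (and $+\infty$ if they never do). The whole statement therefore reduces to identifying this first-disagreement index with the $(\beta)$-adic valuation of $a_1-a_2$.

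The key step is the correspondence: for every $k\ge 1$,
\[
a_1\equiv a_2\pmod{\beta^k}\quad\Longleftrightarrow\quad d_i^{(1)}=d_i^{(2)}\ \text{for all}\ 0\le i\le k-1.
\]
I would prove this by induction on $k$ using the recursive construction of the digits in Lemma \ref{lem:Ded-digit-exp}. Introduce the shift map $T(a):=(a-d_0(a))/\beta$, where $d_0(a)\in\sD$ is the representative of the coset $a+\beta\DD$; this is well-defined in $\DD$ because $a-d_0(a)\in\beta\DD$ and $\beta$ is a nonzero element of the domain, and it satisfies $d_k(a)=d_0(T^k(a))$. The base case $k=1$ is immediate: $d_0(a_1)=d_0(a_2)$ iff $a_1,a_2$ lie in the same coset of $\beta\DD$, i.e. $a_1\equiv a_2\pmod{\beta}$. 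For the inductive step, once $d_0(a_1)=d_0(a_2)$ the identity $T(a_1)-T(a_2)=(a_1-a_2)/\beta$ shows that $\beta^{k+1}\mid(a_1-a_2)$ holds iff $d_0(a_1)=d_0(a_2)$ together with $\beta^k\mid\bigl(T(a_1)-T(a_2)\bigr)$; applying the induction hypothesis to $T(a_1),T(a_2)$ and reindexing digits via $d_{j+1}(a)=d_j(T(a))$ yields agreement of the first $k+1$ digits.

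With the correspondence in hand the conclusion is pure bookkeeping:
\[
\order_{(\beta)}(a_1-a_2)=\sup\{k: a_1\equiv a_2\!\!\pmod{\beta^k}\}=\sup\{k:\text{the first }k\text{ digits agree}\},
\]
which is precisely the first-disagreement index of the digit strings, hence equals $\dnu\bigl(\varphi_{(\beta)}(a_1)-\varphi_{(\beta)}(a_2)\bigr)$. The boundary case $a_1=a_2$ (all digits agree, both valuations $+\infty$) is consistent by the uniqueness in Lemma \ref{lem:Ded-digit-exp} together with $\bigcap_{k\ge 1}\beta^k\DD=(0)$ from Lemma \ref{lem:Ded-congruence}. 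Finally, $\varphi_{(\beta)}$ maps into $\DD[[t]]\subseteq\KKK[[t]]$ by construction, and injectivity is automatic once Property $\PC$ is established (cf. Remark \ref{rem:41}).

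I expect the main, though modest, obstacle to be the clean verification of the congruence–digit correspondence, in particular ensuring the shift map $T$ interacts correctly with reduction modulo powers of $\beta$: everything hinges on $\beta$ being a non-zero-divisor, so that dividing the congruence $a_1-a_2\equiv 0\pmod{\beta^{k+1}}$ by $\beta$ is both legitimate and reversible. Once that point is pinned down, the remaining assertions of the lemma follow directly.
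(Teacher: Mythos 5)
Your proposal is correct and takes essentially the same route as the paper: the paper's proof is precisely the chain of equivalences $a_1 \equiv a_2 \ (\bmod\ \beta^k)$ $\Leftrightarrow$ the first $k$ digits agree $\Leftrightarrow$ $\varphi_{(\beta)}(a_1) \equiv \varphi_{(\beta)}(a_2)\ (\bmod\ t^k)$, phrased in the congruence form \eqref{map-condition} of Property $\PC$ rather than your equivalent valuation form \eqref{eqn:equal-valuations}, together with the small observation that $t^k \DD[[t]] = t^k K[[t]] \cap \DD[[t]]$ to pass to the codomain $K[[t]]$. The only difference is level of detail: the paper asserts the congruence--digit correspondence as immediate from Lemma \ref{lem:Ded-digit-exp}, while you prove it by induction using the shift map $T(a) = (a - d_0(a))/\beta$, which is exactly the recursion underlying the paper's construction of the digits in that lemma.
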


\begin{proof}
The constructed map $\varphi_{(\beta)}$ will have  image in $\DD[[t]]$,
but for Property $\PC(\DD, \beta, \KKK)$  this map is viewed as having the larger codomain  $K[[t]]$.

 Let $d_k(a)$ denote the $k$-th digit of the expansion of $a \in \DD$ using the digit set $\sD$. 
Then, for any $a_1, a_2 \in \DD$, and each $k \ge 1$,  
\begin{eqnarray*}
a_1 \equiv a_2 \, (\bmod \, (\beta^k) ) & \Leftrightarrow& d_i(a_1) = d_i(a_2) \quad \mbox{for} \quad 0 \le a < k,\\
& \Leftrightarrow& \varphi_{(\beta)}(a_1) - \varphi_{(\beta)}(a_2) \equiv 0 \, (\bmod \, t^k \DD[[t]]) \\
& \Leftrightarrow& \varphi_{(\beta)}(a_1) - \varphi_{(\beta)}(a_2) \equiv 0 \, (\bmod \, t^k K[[t]]).
\end{eqnarray*}
The last equivalence holds since $t^k \DD[[t]] = t^k K[[t]] \cap \DD[[t]$.
The last equivalence verifies  property $\PC(\DD, (\beta), K).$
\end{proof}

%
%

\subsection{Property C holds for Dedekind domains}\label{subsec:proofs-general}

To treat  non-principal ideals in  the Dedekind domain $\DD$, we use an alternative  construction,
which works for nonzero proper ideals $\bb$.
It is based on  a result, due to  Claborn \cite[Proposition 2.6]{Clab:65a} in 1965, stating that  any Dedekind domain $\DD$
can be  embedded in a larger Dedekind domain $\DD^{'}$ which is a principal ideal domain.

This construction uses localizations of the polynomial ring $\DD[X]$.
The notation 
\begin{equation}
 \DD[X]_{S} := (S)^{-1} \DD[X]
 \end{equation}
  denotes  localization at a multiplicatively closed set $S \subseteq \DD[X] \smallsetminus \{(0)\}$.
We let  $S_1$  be the set of all monic polynomials in $\DD[X]$,
and we let $S_2$ to be the larger set of all primitive polynomials in $\DD[X]$, 
which are those  with coefficients in $\DD$ that have content ideal $(1)$.
Recall that a   polynomial  $f(X) = \sum_{i=0}^n c_i X^{i} \in D[X]$  (with $c_n \ne 0$) has {\em content ideal}  
\begin{equation*}
\ct (f) : = (c_0, c_1, ..., c_n) \DD \subset \DD.
\end{equation*}
For a  Dedekind domain  $\DD$ the content ideal  of $f \in \DD[X]$ 
is the greatest common divisor ideal of the ideals $(c_i)$ over 
$0 \le i \le n$.  (This ideal may be non-principal.)
For Dedekind domains a  basic  property of content ideals (\cite[Lemma 2.4]{Clab:65a}),
  is multiplicativity,  
\begin{equation}\label{eqn:content-mult}
\ct(f_1 f_2) =\ct(f_1) \ct(f_2) \quad \mbox{for all} \quad  f_1, f_2 \in \DD[X].
\end{equation} 
The multiplicativity property  implies  that $S_2$ is multiplicatively closed. 

We have
$$
D[X] \subset D[X]_{S_1} \subset D[X]_{S_2} \subset K(X),
$$
where $K$ is the fraction field of $D$ and $K(X)$ is the one-variable function field over $K$.
The  notation $\DD(X)$  for   $\DD[X]_{S_2}$ is that used in  the 1962 book of 
Nagata \cite[Sec. 6,  (6.13)ff]{Nag:62}. Viewed inside
the rational function field $K(X)$, the ring $\DD(X)=\DD[X]_{S_2}$ is given by
\begin{equation}\label{eqn:def-DX}
\DD(X) := \{ r(X) \in K(X) :  r(X) = \frac{f(X)}{g(X)}, \, f, g \in \DD[X], \, \, \ct(g)=1\}.
\end{equation}
 

%
%
\begin{prop}[Claborn] \label{prop:Claborn}
For any  Dedekind domain $\DD$ the ring $\DD^{'}=\DD[X]_{S_2} $
is a Dedekind domain and is a principal ideal domain.
The  ring homomorphism
$\psi: \DD \to \DD^{'}$ is injective, and 
 takes  each   nonzero proper ideal $\bb$ of $\DD$ to an  extended ideal $\bb^{'} = \psi(\bb) \DD'$
that is a principal ideal. Here $\psi(\bb) = \{ \psi(a) : a \in \bb\}$.
\end{prop}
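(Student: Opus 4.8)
The plan is to follow Claborn's argument \cite{Clab:65a}, organizing it around three ingredients: injectivity, the prime-ideal structure of $\DD'=\DD[X]_{S_2}$, and the reduction of content ideals to principal ideals after localization. First I would dispose of the easy points. The map $\psi$ is the composite of the inclusion $\DD\hookrightarrow\DD[X]$ with the localization homomorphism $\DD[X]\to\DD[X]_{S_2}$; since every primitive polynomial is nonzero, $S_2\subseteq\DD[X]\smallsetminus\{0\}$, so localizing the domain $\DD[X]$ is injective and $\DD'\subseteq K(X)$ is again an integral domain (this uses that $S_2$ is multiplicatively closed, which follows from content multiplicativity \eqref{eqn:content-mult}). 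Thus $\psi$ is injective. It also follows from \eqref{eqn:def-DX} that $\DD'$ is Noetherian (a localization of the Hilbert-basis ring $\DD[X]$) and integrally closed (a localization of the integrally closed domain $\DD[X]$).

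Next I would identify the nonzero primes of $\DD'$. A prime $P$ of $\DD[X]$ survives in the localization iff $P\cap S_2=\varnothing$, i.e. iff $P$ contains no primitive polynomial. If $P\cap\DD=(0)$ and $P\neq(0)$, then clearing denominators of an irreducible generator of the corresponding prime of $K[X]$ yields a primitive polynomial lying in $P$, so these ``uppers to zero'' do not survive. If $\pp:=P\cap\DD\neq(0)$, then $\pp$ is maximal (as $\DD$ is Dedekind) and $\DD/\pp$ is a field; were $P\supsetneq\pp\DD[X]$, the image of $P$ in $(\DD/\pp)[X]$ would be a nonzero prime, generated by an irreducible which we may lift to a monic — hence primitive — polynomial in $P$, a contradiction. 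Thus $P=\pp\DD[X]$. Consequently the nonzero primes of $\DD'$ are exactly the extensions $\pp\DD'$ of nonzero primes $\pp$ of $\DD$, each of height one, so $\DD'$ has Krull dimension one; being also Noetherian and integrally closed, $\DD'$ is a Dedekind domain.

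The heart of the proof is the claim that for every nonzero proper ideal $\bb$ the extension $\bb\DD'=\psi(\bb)\DD'$ is principal. Since $\DD$ is Noetherian, write $\bb=(c_0,\dots,c_n)$ and set $f(X)=\sum_{i=0}^n c_iX^i\in\DD[X]$, so that $\ct(f)=\bb$ by the definition of the content ideal. The inclusion $f\DD'\subseteq\bb\DD'$ is immediate because the coefficients of $f$ lie in $\bb$. For the reverse inclusion I would argue locally: ideal equality in the Dedekind domain $\DD'$ can be tested at each maximal ideal $\pp\DD'$, and the localization $\DD'_{\pp\DD'}$ is the Nagata ring $\DD_\pp(X)$, which is a discrete valuation ring since $\DD_\pp$ is. Over the valuation ring $\DD_\pp$ the ideal $\bb\DD_\pp=(c_j)$ is generated by a coefficient $c_j$ of minimal valuation, and $f=c_j f_1$ with $\ct(f_1)=(1)$, so $f_1$ is a unit in $\DD_\pp(X)$; hence $f\DD'_{\pp\DD'}=c_j\DD'_{\pp\DD'}=\bb\DD'_{\pp\DD'}$ at every maximal ideal, giving $f\DD'=\bb\DD'$. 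Applying this to the primes $\pp$ of $\DD$ shows every maximal ideal $\pp\DD'$ of the Dedekind domain $\DD'$ is principal; since in a Dedekind domain every nonzero ideal factors uniquely as a product of primes, every ideal of $\DD'$ is then principal, so $\DD'$ is a principal ideal domain.

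The step I expect to be the main obstacle is the local reduction in the last paragraph: identifying $\DD'_{\pp\DD'}$ with the valuation ring $\DD_\pp(X)$ and justifying that content becomes a principal ideal over $\DD_\pp$, together with the ``check equality at every maximal ideal'' principle for ideals of a Dedekind domain. Everything else — injectivity, Noetherianness, integral closure, the prime identification, and the forward inclusion $f\DD'\subseteq\bb\DD'$ — is routine once content multiplicativity \eqref{eqn:content-mult} is in hand.
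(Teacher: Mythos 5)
Your proof takes a genuinely different route from the paper's. The paper disposes of this proposition almost entirely by citation: Claborn's Lemma 2-1 (that $\DD[X]_{S_1}$ is Dedekind, hence so is the further localization $\DD[X]_{S_2}$), his Proposition 2-6 (that $\DD(X)$ is a PID), and his Proposition 2-3 (the isomorphism $\Cl(\DD)\cong\Cl(\DD[X]_{S_1})$, which the second localization annihilates). You instead give a self-contained argument: Noetherian $+$ integrally closed $+$ Krull dimension one gives Dedekind; the primes surviving the localization are exactly the $\pp\DD'$; and a local computation in the DVR $\DD'_{\pp\DD'}=\DD[X]_{\pp\DD[X]}=\DD_\pp(X)$ shows each extended ideal $\bb\DD'$ is principal, generated by any $f$ with $\ct(f)=\bb$. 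The closest analogue in the paper is its \emph{later} Proposition \ref{prop:Claborn2}, which proves principality of extended ideals globally, via the two-generator property $\aaa=(\alpha_0,\alpha_1)$ and an inverse-ideal-class content computation, rather than by localizing at maximal ideals. Your localization argument for the PID step is sound: ideal equality can be tested at every maximal ideal, the identification of $\DD'_{\pp\DD'}$ with the Nagata ring $\DD_\pp(X)$ is correct, and over the DVR $\DD_\pp$ the content of $f$ does become principal, generated by a coefficient of minimal valuation, with the remaining factor primitive and hence a unit.

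There is, however, one step that fails as written: your treatment of the ``uppers to zero.'' You claim that if $P$ is a nonzero prime of $\DD[X]$ with $P\cap\DD=(0)$, then ``clearing denominators of an irreducible generator of the corresponding prime of $K[X]$ yields a primitive polynomial lying in $P$.'' Clearing denominators of $g\in K[X]$ produces an element of $P\cap\DD[X]$ whose content is some nonzero ideal $\cc$ of $\DD$, and you can rescale it to have content $(1)$ by an element of $K^{\ast}$ only if $\cc$ is principal as a fractional ideal. When $\Cl(\DD)\neq 0$ --- precisely the case in which the proposition has any content --- this normalization is impossible in general, so as stated the step does not show $P\cap S_2\neq\varnothing$. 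The repair is the same device the paper uses in the proof of Proposition \ref{prop:Claborn2}: take $g\in P\cap\DD[X]$ with $\ct(g)=\cc$, choose an integral ideal $(\beta_0,\beta_1)$ in the ideal class inverse to $\cc$, so that $\cc\,(\beta_0,\beta_1)=(\gamma)$ is principal, and set $h(X)=\beta_0+\beta_1X$. By content multiplicativity \eqref{eqn:content-mult}, $\ct(gh)=(\gamma)$, so $\frac{1}{\gamma}\,g h\in\DD[X]$ is primitive; and it lies in $gK[X]\cap\DD[X]=P$. With this patch your identification of the surviving primes, and hence the entire proof, goes through.
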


\begin{proof}
Claborn \cite[Lemma 2-1]{Clab:65a} showed that if $\DD$ is a Dedekind domain, then $\DD[X]_{S_1}$
is a Dedekind domain. Therefore  the further localization  
 $\DD(X) := \DD[X]_{S_2}$ is automatically a Dedekind domain.   In \cite[Proposition 2-6]{Clab:65a} 
 Claborn showed that  $\DD(X)$  is a principal ideal domain.
 The rest of the proposition immediately follows. 
 
 Claborn \cite[Proposition 2--3]{Clab:65a} additionally
 showed  that 
the inclusion map $\iota: \DD \to \DD[X]_{S_1}$ induces  an isomorphism of ideal class groups,
$\iota: \Cl(\DD) \to \Cl(\DD[X]_{S_1})$.  
The further localization  inverting $S_2$ then  annihilates the  ideal class group. 
 \end{proof}

We identify  $\DD \subset \DD(X)$, using the injective map $\psi: D \to D(X)$.
We need further properties of the  injective map $\psi: \DD \to \DD[X]_{S_2}$.
In showing these properties  we can obtain another proof of Proposition \ref{prop:Claborn},
see Remark \ref{rem:66}.

%
%
\begin{prop}\label{prop:Claborn2}
Let $\DD$ be a Dedekind domain with fraction field $K$, and let
$\DD(X) = \DD[X]_{S_2} \subset K(X)$.

(1) For each ideal $\aaa$ of $D$, the extended ideal
$\aaa^{e} = \aaa \DD(X)$ under the inclusion $\iota: \DD \to \DD(X)$ is a principal ideal.

(2) Each ideal $\aaa$ of $D$ satisfies
\begin{equation}\label{eqn:ec-Ded2}
\aaa = \aaa \DD(X) \cap \DD.
\end{equation}
Equivalently, the induced homomorphism $\iota_{\aaa}: \DD/ \aaa \to \DD(X)/ \aaa \DD(X)$
is injective.
\end{prop}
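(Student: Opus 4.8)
The plan is to prove the two displayed assertions directly, treating part~(2) first since it is the shorter of the two and isolating part~(1) as the substantive step. Both statements are trivial when $\aaa=(0)$ or $\aaa=\DD$, so throughout I would assume $\aaa$ is a nonzero proper ideal. I would also record at the outset that, since $\DD$ is Dedekind, every ideal $\aaa$ is finitely generated and invertible, so $\aaa\aaa^{-1}=\DD$; these two facts, together with multiplicativity of the content ideal \eqref{eqn:content-mult}, are the only inputs beyond the explicit description \eqref{eqn:def-DX} of $\DD(X)$.

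For part~(2), I would use the localization description $\aaa\DD(X)=\aaa\DD[X]_{S_2}$. If $a\in\DD$ lies in $\aaa\DD(X)$, then $a=h/q$ with $h\in\aaa\DD[X]$ and $q\in S_2$ primitive, whence $aq\in\aaa\DD[X]$. Reading off coefficients, every coefficient of $aq$ lies in $\aaa$, which says $a\,\ct(q)\subseteq\aaa$. Since $q$ is primitive, $\ct(q)=(1)$, and hence $a\in\aaa$. This proves $\aaa\DD(X)\cap\DD\subseteq\aaa$; the reverse inclusion is immediate, giving \eqref{eqn:ec-Ded2}, and the injectivity of $\iota_{\aaa}$ is the standard reformulation.

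For part~(1), the plan is to exhibit an explicit generator. Writing $\aaa=(a_0,\dots,a_n)$ and setting $f(X)=\sum_{i=0}^{n}a_iX^i\in\DD[X]$, one has $\ct(f)=\aaa$, and I would prove the stronger equality $\aaa\DD(X)=f\DD(X)$. The inclusion $f\DD(X)\subseteq\aaa\DD(X)$ is clear because $f\in\aaa\DD[X]$. For the reverse inclusion I would build a partner polynomial: choosing $0\neq d\in\DD$ with $\cc:=d\aaa^{-1}$ an integral ideal and picking $g\in\DD[X]$ with $\ct(g)=\cc$, multiplicativity of content gives $\ct(fg)=\aaa\cc=(d)$, so $fg=d\,p$ for a primitive $p\in\DD[X]$ (cancelling the invertible ideal $(d)$ from $\ct(fg)=(d)\,\ct(fg/d)$ forces $\ct(fg/d)=(1)$). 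Since $p\in S_2$ is a unit of $\DD(X)$, for each $a\in\aaa$ I can write $a/f=(ag/d)\,p^{-1}$, and here $ag$ has all coefficients in $a\cc\subseteq\aaa\cc=(d)$, so $ag/d\in\DD[X]\subseteq\DD(X)$. Thus $a/f\in\DD(X)$, i.e.\ $a\in f\DD(X)$, which yields $\aaa\DD(X)\subseteq f\DD(X)$ and hence the desired equality; in particular $\aaa\DD(X)$ is principal.

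The main obstacle is this reverse inclusion in part~(1): one must manufacture, from the invertibility of $\aaa$ alone, a polynomial $g$ whose product with $f$ is a scalar multiple of a primitive polynomial, so that the primitive factor becomes an invertible denominator in $\DD(X)$. The delicate point is not the existence of $g$ but verifying that the candidate expression $a/f=(ag/d)\,p^{-1}$ actually lands in $\DD(X)$, which hinges precisely on the content identity $a\cc\subseteq(d)$ supplied by $\cc=d\aaa^{-1}$ and on multiplicativity of content; without \eqref{eqn:content-mult} (equivalently, the Dedekind hypothesis) this step fails. I would also note that part~(1) reproves that extended ideals are principal, recovering the relevant half of Proposition~\ref{prop:Claborn} independently, as flagged in Remark~\ref{rem:66}; alternatively one could simply invoke Proposition~\ref{prop:Claborn} for principality and use the content computation only for part~(2).
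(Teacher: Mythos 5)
Your proposal is correct, and the engine of part (1) is the same as the paper's: pair the content-$\aaa$ polynomial with a partner polynomial whose content lies in the inverse ideal class, so that the product is a scalar multiple of a primitive polynomial, hence a unit of $\DD(X)$; multiplicativity of content \eqref{eqn:content-mult} is what makes this work in both arguments. But your organization differs in two genuine ways. First, you prove (2) directly from the localization description $\aaa\DD(X)=S_2^{-1}\left(\aaa\,\DD[X]\right)$ together with primitivity of the denominator, with no reference to (1); the paper instead deduces (2) from the identification $\aaa\DD(X)=I_{\aaa}$, where $I_{\aaa}$ is the ideal of fractions $f/g$ with $\ct(g)=(1)$ and $\aaa\mid\ct(f)$, so in the paper (2) is a corollary of (1). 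Second, you verify the key inclusion only on ring elements, namely $\aaa\subseteq f\,\DD(X)$, which suffices because $\aaa$ generates the extended ideal; the paper does the harder job of showing $I_{\aaa}\subseteq(\alpha_0+\alpha_1 x)\DD(X)$ for arbitrary fractions. Your route is leaner and more self-contained, while the paper's buys the stronger structural fact $\aaa\DD(X)=I_{\aaa}$, which is exactly what Remark \ref{rem:66} exploits to describe the full set of ideals of $\DD(X)$. (A cosmetic difference: the paper invokes the two-generator property of Dedekind ideals to get linear generators $\alpha_0+\alpha_1 x$, whereas you take a finite generating set of any size and a polynomial of higher degree; both are fine.)
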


\begin{proof}  For each ideal $\aaa$ of $D$ we set
\begin{equation}\label{eq:extend-ideal-frac}
I_{\aaa} := \{ r(x) \in K(X) :  r(x) = \frac{f(x)}{g(x)}, \, f, g \in \DD[X], \ct{g}=1, \, \, \aaa \mid \ct{f}\},
\end{equation}
It is an ideal of $\DD(X)$, and $I_{(0)} = 0$, $I_{\DD(X)} = \DD(X)$.
We can extend the notion of content to all members of $\DD(X)$, setting $\ct(r) := \ct(g)$ in
the representation $\frac{f(x)}{g(x)}$  above. 
This representation is not unique, but its content is well-defined by
\eqref{eqn:content-mult}. The ideal $I_{\aaa}$ 
consists of  all elements of $K(X)$ having content $\aaa$ in this sense.

 (1) We show all image ideals $I_{\aaa}$  in $\DD(X)$ are principal.  The zero ideal is principal, so we may assume $\aaa$ is nonzero.
 Every nonzero ideal $\aaa$ of a Dedekind domain is generated by two nonzero elements $\alpha_0, \alpha_1$
 where the first element can be chosen arbitrarily  (\cite[Chap. 13, Cor. 4.5]{AusB:74});  
Thus we may write $\aaa= (\alpha_0, \alpha_1) \DD.$ We claim that $\aaa^{e} :=\aaa \DD(X)$ has
\begin{equation}\label{eq:ideal-match} 
\aaa^{e} = ( \alpha_0 + \alpha_1 x) \DD(X)  = I_{\aaa}.
\end{equation} 
Here $\ct(\alpha_0+ \alpha_1x) = (\alpha_0, \alpha)\DD= \aaa$.
We first observe the inclusions 
$$
( \alpha_1 + \alpha_2 x) \DD(X)\subseteq \aaa \DD(X) \subseteq  I_{\aaa},
$$
where the leftmost inclusion holds since
 $\alpha_1, \alpha_2 x \in \aaa \DD(X)$, and the rightmost inclusion holds since $\aaa \subset I_{\aaa}$
 and $I_{\aaa}$ is a $\DD(X)$-ideal.
 
 To prove \eqref{eq:ideal-match} it suffices to show that $ I_{\aaa} \subseteq ( \alpha_1 + \alpha_2 x) \DD(X)$.
To this end  consider an arbitrary nonzero element  $r(x) = \frac{f(x)}{g(x)} \in I_{\aaa}$, with  $\ct(g)=1$ and  $\aaa \mid \ct(f)$. 
We have
$$
r(x) = (\alpha_0 + \alpha_1 x) \tilde{r}(x) \quad \mbox{with} \quad \tilde{r}(x) = \frac{f(x)}{g(x)(\alpha_0 + \alpha_1x)} \in K(X).
$$
We will show $\tilde{r}(x) \in D(X)$, which certifies $r(x) \in (\alpha_0 + \alpha_1 x) \DD(X)$. Pick an integral  ideal $\bb$ of $D$ in
the (fractional) ideal class of $D$  inverse to $\aaa$, so that $\aaa \bb = (\gamma)$ for some $\gamma \in \DD$. Let $\bb= (\beta_0, \beta_1)\DD$
be nonzero generators of $\bb$, whence
 $\ct(\beta_0+ \beta_1 x ) = \bb$.  
 Now we have $\ct( (\alpha_0+\alpha_1 x)(\beta_0+ \beta_1x)) =  \aaa \bb = (\gamma)$.
 It follows that  $h(x) :=\frac{1}{\gamma}(\alpha_0+\alpha_1 x)(\beta_0+ \beta_1x)  \in \DD[X]$ has $\ct(h(x))= (1).$
Additionally set $\tilde{f}(x) = f(x) (\beta_0 + \beta_1 x)$ Then $\ct(\tilde{f}) = \ct(f) \ct(\beta_0+ \beta_1x)$ is divisible by
$\aaa\bb = (\gamma)$. We conclude that $k(x)= \frac{1}{\gamma} f(x) (\beta_0 + \beta_1 x) \in \DD(X).$ Now we have
$$
\tilde{r}(x)= \frac{f(x)(\beta_0+ \beta_1x)}{g(x)(\alpha_0 + \alpha_1x)(\beta_0+ \beta_1x)}= \frac{k(x)}{g(x) h(x)},
$$
where both $g(x) h(x)$ and $k(x)$ are in  $\DD(X)$, and the  content $\ct(gh) = \ct(g)\ct(h)=(1)$. Thus
$\tilde{r}(x) \in D(X)$.
We conclude from \eqref{eq:ideal-match} that $ \aaa^c= \aaa\DD(X) = (\alpha_0+ \alpha_1 x) \DD(X)$ is a principal ideal.


(2) We have $I_{\alpha} \cap \DD = \aaa$ from its definition \eqref{eq:extend-ideal-frac}. 
Since $\aaa^e= I_{\alpha}$ by \eqref{eq:ideal-match} the result \eqref{eqn:ec-Ded2} follows. 
The equivalence follows from observing  $\ker(\iota_{\aaa}) = (\aaa \DD(X))/ \aaa.$
\end{proof} 

%
%
\begin{rem}\label{rem:66}
One can further show  in (1) above that 
the ideals $I_{\aaa}$, running over the ideals $\aaa$ of $\DD$, 
comprise  the complete set of (integral) ideals of $\DD(X)$.
 Given an ideal $\cc$ of $\DD(X)$, the associated  $\aaa$ in $\cc= I_{\aaa}$ will be its content $\ct(\cc)$,
defined as the gcd of the contents of all $r(x) \in \cc$. To show the full set of ideals occurs
one must then verify that  every ideal  of content $\ct(\cc)$
contains a  polynomial $f$  with $\ct(f)= \cc$.
\end{rem}

%
%
\begin{thm}[Property C maps for ideals of Dedekind domains]\label{thm:varphi_B-general}
Given a Dedekind domain $\DD$,
for each nonzero proper ideal $\bb$ in $\DD$ there is an injective map
$$\varphi_{\bb}: \DD \to \DD^{'} [[t]] \subset K^{'}[[t]],$$
 which satisfies
Property $\PC( \DD,\bb, K^{'})$.
Here   $\DD^{'}=\DD(X)$  is a Dedekind domain ( given in \eqref{eqn:def-DX}),
and  $K^{'}$ is the quotient field of $\DD^{'}$.
\end{thm}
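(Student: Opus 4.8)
The plan is to reduce the general case to the principal-ideal case of Lemma \ref{lem:Ded-digit-exp2} by passing to Claborn's overring $\DD' = \DD(X)$, in which $\bb$ becomes principal. First I would invoke Proposition \ref{prop:Claborn}: the ring $\DD' = \DD(X)$ is a Dedekind domain which is a principal ideal domain, the inclusion $\psi \colon \DD \to \DD'$ is an injective ring homomorphism, and the extended ideal $\bb' := \bb\DD'$ is principal, say $\bb' = (\beta)$ with $\beta \in \DD'$. Since $\DD'$ is a Dedekind domain and $(\beta)$ is principal, Lemma \ref{lem:Ded-digit-exp2} applies to the pair $(\DD', (\beta))$ and produces a digit-expansion map $\varphi_{(\beta)} \colon \DD' \to \DD'[[t]] \subset K'[[t]]$ (for a fixed choice of digit set for $\DD'/(\beta)\DD'$) that satisfies Property $\PC(\DD', (\beta), K')$. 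I would then take the composite
\[
\varphi_\bb := \varphi_{(\beta)} \circ \psi \colon \DD \longrightarrow \DD'[[t]] \subset K'[[t]],
\]
which has the required codomain.

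The substance of the argument is checking that $\varphi_\bb$ satisfies Property $\PC(\DD, \bb, K')$, i.e. that for all $a_1, a_2 \in \DD$ and all $k \ge 1$,
\[
\varphi_\bb(a_1) \equiv \varphi_\bb(a_2) \pmod{t^k} \iff a_1 \equiv a_2 \pmod{\bb^k}.
\]
Applying Property $\PC(\DD', (\beta), K')$ to the elements $\psi(a_1), \psi(a_2) \in \DD'$ rewrites the left-hand congruence as $\psi(a_1 - a_2) \in (\beta)^k$ in $\DD'$. Hence it remains to prove the purely ring-theoretic equivalence
\[
\psi(a_1 - a_2) \in (\beta)^k \iff a_1 - a_2 \in \bb^k.
\]

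Two facts close this gap. First, because $\psi$ is a ring homomorphism the extension of ideals is multiplicative, so $(\beta)^k = (\bb\DD')^k = \bb^k \DD'$; thus, under the identification $\DD \subset \DD'$, the left condition says exactly that $a_1 - a_2 \in \bb^k\DD'$. Second, I would apply the contraction identity of Proposition \ref{prop:Claborn2}(2) to the ideal $\bb^k$ of $\DD$, which gives $\bb^k\DD' \cap \DD = \bb^k$. Since $a_1 - a_2 \in \DD$, membership $a_1 - a_2 \in \bb^k\DD'$ is then equivalent to $a_1 - a_2 \in \bb^k$ (the reverse inclusion $\bb^k \subseteq \bb^k\DD'$ being trivial). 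Chaining the three equivalences yields Property $\PC(\DD, \bb, K')$, and injectivity of $\varphi_\bb$ follows either as a composite of injective maps or from the fact (Remark \ref{rem:41}) that Property $\PC$ forces injectivity.

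I expect the main obstacle to be the forward direction of the ring-level equivalence, $a_1 - a_2 \in \bb^k\DD' \Rightarrow a_1 - a_2 \in \bb^k$: a priori the extended ideal $\bb^k\DD'$ could contract back to something strictly larger than $\bb^k$, and it is precisely the separatedness statement $\bb^k\DD(X) \cap \DD = \bb^k$ of Proposition \ref{prop:Claborn2}(2) that rules this out. Once that contraction property and the multiplicativity of ideal extension are in hand, the remainder is routine transfer of congruences through the principal-ideal map $\varphi_{(\beta)}$.
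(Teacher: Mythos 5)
Your proposal is correct and takes essentially the same route as the paper's own proof: both pass to Claborn's overring $\DD^{'} = \DD(X)$ via Proposition \ref{prop:Claborn} so that $\bb$ extends to a principal ideal, apply the digit-expansion map of Lemma \ref{lem:Ded-digit-exp2} there, and transfer congruences back to $\DD$ using the contraction identity $\bb^k \DD^{'} \cap \DD = \bb^k$ from Proposition \ref{prop:Claborn2}(2). Your explicit appeal to multiplicativity of ideal extension, $(\bb\DD^{'})^k = \bb^k\DD^{'}$, is a step the paper uses only implicitly, but otherwise the two arguments coincide.
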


\begin{proof} 
We are given a proper  ideal $\bb$ of $\DD$. By Proposition \ref{prop:Claborn}
 under the embedding $\DD \to \DD' =\DD(X)$ the extension
ideal  $\bb' =\bb \DD^{'}:=  \bb \DD(X)$ is a principal ideal $\bb' = \beta\DD(X)$  for some $\beta \in \DD(X)$.
Since $\DD^{'}$ is a Dedekind domain, we may
 use a digit expansion of elements of $\DD(X)$ modulo $\bb'$ given by Lemma \ref{lem:Ded-digit-exp}.
 We write $\bb' = \beta \DD^{'}$, making an arbitrary choice of generator $\beta \in \DD^{'}$ as a principal ideal. 
  We make an arbitrary choice of  a digit set $\sD \subset \DD^{'}$  for all  cosets $x+ \beta \DD^{'}$.
 (Note that the element $\beta$ and the digits $d_i$ are generally {\em not} in the image of $\DD$ inside $\DD^{'}$.) 
 Then  for  each 
 $a' \in \DD^{'}$  we obtain  a (unique) expansion
 $$
 a' \sim \sum_{i=0}^{\infty} d_i \beta^i,
$$
with all $d_i \in \sD$, satisfying
\begin{equation}\label{eqn:b-prime-congruence}
a' \equiv \sum_{i=0}^{k-1} d_i \beta^{i} \, (\bmod \, (\bb')^{k}), 
\end{equation} 
for all $k \ge 1$.
We  define the map $\varphi_{\bb'}: \DD^{'} \to \DD^{'} [[t]]$ into the power series ring $\DD^{'}[[t]]$ by
$$
\varphi_{\bb} (a') := \sum_{i=0}^{\infty} d_i t^{i}. 
$$
The map $\varphi_{\bb'}$ is injective by the uniqueness property in Proposition \ref{prop:Claborn}.

We  verify that  the map $\varphi_{\bb}$ satisfies Property $\PC( \DD,\bb,  K^{'}).$
We already know by Lemma \ref{lem:Ded-digit-exp2} that 
 $\varphi_{\bb'}$ has image in $\DD'[[t]]$ and satisfies Property $\PC(\DD', \bb', K^{'})$, where we define
$\varphi_{\bb'}: \DD^{'} \to \DD^{'} [[t]]$ by sending $\alpha \sim d_i \beta^{i} \in \DD^{'}$ to 
$\varphi_{\bb}(\alpha) = \sum_{i=0}^{\infty} d_i t^{i} \in \DD^{'} [[t]]$.
Thus $\DD'$ is  $(\beta) \DD^{'}$-sequenceable.

The map $\varphi_{\bb}: \DD \to \DD^{'} [[t]]$ is obtained by restriction of the map $\varphi_{\bb'}$ to $\DD$, viewing
$\DD$ as contained in $\DD'[[t]]$.  Here $\bb'= \beta' \DD^{'}$ is principal.
The assertion  that $\varphi_{\bb}$ satisfies 
Property $\PC( \DD,\bb, K^{'})$ 
follows by restriction of the domain
of $\varphi_{\bb'}$  to $\DD$. 
Namely  consider a fixed  set $S \subset \DD \subset \DD^{'}$.
 That is, for $a_1, a_2 \in S$, and setting $\ddS = \varphi_{\bb'}(S)$, for $a_1, a_2 \in S$, 
\begin{eqnarray*}
  a_1  \equiv a_2\,  (\bmod \, \bb^k)  & \Longleftrightarrow& a_1  \equiv a_2\,  (\bmod \, (\bb^{'})^k \cap \DD) \\
      & \Longleftrightarrow& a_1  \equiv a_2\,  (\bmod \, (\beta^k \DD') ) \\
& \Longleftrightarrow& \quad \varphi_{\bb'} (a_1) \equiv \varphi_{\bb'}(a_2) \, (\bmod \, (t^k)\DD^{'} [[t]]) \\
& \Longleftrightarrow& \quad \varphi_{\bb'} (a_1) \equiv \varphi_{\bb'}(a_2) \, (\bmod \, (t^k)K^{'} [[t]]). 
\end{eqnarray*} 
The  equivalence on the first line holds by Proposition  \ref{prop:Claborn2} (2) via \eqref{eqn:ec-Ded2},
that on the second line holds because $a_1, a_2 \in D$ by hypothesis and $( \bb^{'})^k= \beta^k \DD'$,
and that on the third line holds by Proposition $\PC(\DD', \bb', K^{'})$, with the image being in $\DD'[[t]]$.
The last equivalence holds since $t^k \DD'[[t]] = t^k K^{'}[[t]] \cap \DD^{'}[[t]$.
We conclude that  Property $\PC( \DD,\bb,  K^{'})$ holds. By Theorem \ref{prop:main-propertyC} we conclude
$\DD$ is  $\bb$-sequenceable. 
\end{proof}

%
%
%
%
\subsection{Proof of Theorem ~\ref{thm:NEW-MAIN}}

We prove the well-definedness Theorem \ref{thm:NEW-MAIN}
 and discuss the
construction. Separate proofs are given for the zero ideal $\bb =(0)$ and the unit ideal  $\bb= (1)$.

\begin{proof}[Proof~of~Theorem~\ref{thm:NEW-MAIN}]
 Given a general Dedekind domain $\DD$ and an ideal $\bb$, we first apply Theorem \ref{thm:varphi_B-general}
 to conclude there is a an injective map $\varphi_{\bb}: \DD \to \DD^{'} [[t]] \subset \KKK'[[t]]$,
 with $\DD^{'}=\DD(X)$ a Dedekind domain, 
 which satisfies
Property $\PC( \DD,\bb,  \KKK').$  
Now  Theorem \ref{prop:main-propertyC}
yields for all  subsets $\SSS$ of $\RRR$ and for all $\SSS$-test sequences $\mathbf{a}$, that
 \begin{equation}\label{eqn:equal-inv1}
\ivr_k^{h, \{r\}} \left(\SSS, \ddB, \mathbf{a}  \right)= \ivr_k^{h,\{r\}} \left( \varphi_{\ddB}^{'}(S), ( t )\KKK^{'}[[t]], \varphi_{\ddB}^{'}(\mathbf{a})\right).
\end{equation} 
In particular, we now obtain that $\RRR$ is $\bb$-sequenceable, as a consequence of Bhargava's Theorem \ref{thm:Bhar-main09}
applied taking $\DD= \KKK^{'}[[t]]$ and $\pp= (t) \KKK^{'}[[t]]$, 
for all  $r$-refined $\bb$-orderings $\aaa$ of order $h$, for any $0 \le r < \infty$ and $0\le h \le + \infty$.


It remains to treat the cases $\bb=(0)$ and $\bb=(1)=\DD$.  
We verify the assertions 
for these  $\bb$-orderings  and $\bb$-sequences directly in $\DD$, without using a map with
Property C.  

For the case $\bb=(1) =\DD$, we have $\ordr_{(1)}( a- a^{'}) = +\infty$
for all elements $a, a' \in \DD$, which implies that
all $\SSS$-test sequences $\mathbf{a}$ are $\bb=(1)$-orderings.  For  $n \ge r+1$ we compute that
\begin{equation}\label{eqn:1-sequence}
\ivr_n^{h, \{r\}}(\SSS, (1), \mathbf{a} ) = \sum_{i \in A_{n,r}} \min\left(h, \ordr_{(1)}(a_n- a_i)\right) = h(n-r),
\end{equation}
while $\ivr_n^{h, \{r\}}(\SSS, (1), \mathbf{a} )=0$ for $0 \le n \le r$. 
In all cases, for all $n \ge 0$, 
$$
\nu_{n}( \SSS, (1)) = (1)^{\ivr_n^{h, \{r\}}( \SSS, (1), \mathbf{a} ) } = (1).
$$

The case $\bb=(0)$ is more complicated. We have  $\ordr_{(0)}( a- a^{'}) = 0$ if $a \ne a'$
and $\ordr_{(0)}( 0)= +\infty$. For $n=0$ we  have $\ivr_{0}(\SSS, (0), \mathbf{a} ) = 0$, by convention. 
For $n \ge1$, initial  $\bb=(0)$-orderings are those that avoid having $(r+2)$ equal elements as long as possible.
This possibility can be postponed  for $1 \le n < |S| (r+1)$, independent of the value of $h$. Therefore all
such sequences 
\begin{equation}
\ivr_n^{h, \{r\}}(\SSS, (0), \mathbf{a} ) = 0 
\end{equation} 
for $0\le n < (r+1)\vert \SSS \vert$, and
for $n= (r+1)\vert \SSS \vert$ we  have (for any $h$) 
\begin{equation}
\ivr_n^{h, \{r\}}(\SSS, (0), \mathbf{a} ) = 1.
\end{equation} 
All larger $n$ must have an $r$-removed $a_n$ agreeing with an earlier element,  
so it follows that 
\begin{equation*} 
\nu_{n}( \SSS, (0), \mathbf{a}) = (0)^{\ivr_n^{h, \{r\}}( \SSS, (0),\mathbf{a} ) } = \begin{cases} 
 (1) & \mbox{for} \quad 0 \le n < (r+1) \vert \SSS \vert,\\
 (0)  & \mbox{for} \quad \quad n \ge  (r+1) \vert \SSS \vert,
 \end{cases}
 \end{equation*}
 using the convention $(0)^0=1.$
For fixed $(h, r)$, it can be checked that  the associated $\bb=(0)$ sequences
are,  for $n \ge  (r+1) \vert \SSS \vert$, 
\begin{equation}\label{eqn:general-r-h-zero}
\ivr_n^{h, \{r\}}(\SSS, (0)) = h \left\lceil  \frac{ n+1- (r+1) \vert \SSS \vert}{ \vert \SSS \vert} \right\rceil,
\end{equation} 
where $\lceil \cdot \rceil$ is the ceiling function. 
We omit details characterizing all $\bb=(0)$-orderings 
and verifying  independence of the $\bb$-ordering, for $n \ge (r+1)\vert \SSS \vert$.

If $\vert \SSS \vert$ is an infinite set, then the ideals associated to $\bb=(0)$ and $\bb=(1)$ are the
full ring $(1)$ for all $n \ge 0$. 
\end{proof}

%
%
\begin{rem}\label{rem:610}
The ring $\DD(X)$ in Theorem \ref{thm:varphi_B-general} has the property that for  each of its nonzero proper ideals $\bb \DD(X)$ the 
ring $\DD(X)/ \bb\DD(X)$ has infinite cardinality. In particular, the  residue
classes $T^k + \bb\DD(X)$ for $k \ge 0$ are all distinct $(\bmod \, \bb\DD(X))$
because   the difference of any two residue classes 
is $T^k -T^j \, (\bmod \, \bb\DD(X))$ and $\ct (T^k- T^j) =(1)$ when $j \ne k$, so $T^k -T^j$ is a unit in $\DD(X)$,
  so not in $\bb\DD(X)$.

It follows that  if $\DD$ is a Dedekind domain, then any subset $\SSS$ of  the Dedekind domain $\DD(X)$ that hits infinitely
many different residue classes $(\bmod \, \bb)$ necessarily has  
 associated
$\bb$-exponent sequence $\alpha_k(S, \bb)=0$  for all $k \ge 0$. 
Thus  only  very special subsets $\SSS \subset \DD(X)$ 
can have $\bb\DD(X)$-exponent sequences having at least one exponent
taking a  nonzero value. However it  is {\em exactly} such special subsets
that are constructed and used  in the proof above. 
\end{rem} 

%
%
\begin{rem}\label{rem:69}

We can give an alternate proof of Theorem \ref{thm:NEW-MAIN}  without appeal to Bhargava's Theorem \ref{thm:Bhar-main09},
for ``plain" $t$-orderings.
\begin{proof}[Alternate~ proof~of~Theorem~\ref{thm:NEW-MAIN} (plain $t$-orderings)] 
For   the special case of plain $t$-orderings ($r=0, h= +\infty$), we may 
 use Theorem \ref{thm:varphi_B-general}
 to obtain the  congruence-preserving injective map $\varphi_{\bb}: \DD \to \DD^{'} [[t]]\subset \KKK'[[t]]$.
To complete the argument, at  the last step we appeal  to Theorem \ref{thm:max-min}
giving  well-definedness of $t$-sequence invariants  for $\KKK^{'}[[t]]$ with $\pp= (t) \KKK^{'}[[t]]$
independent of $t$-ordering, via a max-min criterion.
\end{proof}

\end{rem}

%
%
\section{Generalized factorial 
ideals  and  generalized  binomial coefficient ideals}\label{sec:6}

We  define generalized-integer ideals, generalized factorial ideals, and
generalized binomial coefficient ideals   for $\sI$-sequenceable rings $\RRR$,
via factorizations allowing $\bb$-sequences for  all ideals $\bb$ of $\RRR$.  These ideals
are initially defined as fractional ideals. 
We  prove  results stated in Section \ref{sec:2}, deducing that generalized-integer ideals and generalized binomial
coefficient ideals are integral ideals of $\DD$.

%
%
\subsection{Product formulas  for generalized factorials}\label{subsec:62} 

We give formulas expressing  generalized-integer ideals and generalized binomial coefficient ideals
 as  (infinite) products.  
 As a first step below we   prove a finiteness lemma showing that generalized factorials are well defined,
i.e. only finitely many terms are not the unit ideal.
 
 A priori these products define  fractional ideals; we  establish 
 in later lemmas that  they  are integral ideals.
 The individual factors in these product formulas include  composite ideals. 
 Therefore obtaining  complete  prime factorizations inside the Dedekind domain
requires factorizing  composite ideals and combining  the prime ideal
contributions of different terms.

%
%
\begin{lem}\label{lem:60} 
Let $S$ be a nonempty subset of the Dedekind domain $\DD$. Then 
for positive integers $n$ with $1 \le n<\vert S\vert$, there are only finitely many ideals $\bb \in \sI(\DD)$ such
that $\ivr_i^{h, \{r\}}(S, \bb) >0$ for some $i$ with $1 \le i \le n-r$.

In particular,  the generalized factorial
\begin{equation}\label{eqn:factorial-def}
[k]!_{S,\T}^{h, \{r\}} =\prod_{\bb \in\T}\bb^{\ivr_k^{h, \{r\}}(S,\bb)}
\end{equation}
is well-defined for $0 \le k \le \vert S \vert -r-1$.
\end{lem}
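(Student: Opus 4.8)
The plan is to reduce the finiteness assertion to the elementary fact that a fixed nonzero element of a Dedekind domain lies in only finitely many ideals. First I would dispose of the two degenerate ideals. The unit ideal $(1)$ is a single ideal, and in any case contributes only the factor $(1)$ to \eqref{eqn:factorial-def}. For the zero ideal one has $\ivr_i^{h,\{r\}}(S,(0))=0$ for all $i<(r+1)\vert S\vert$, hence in particular for every $i\le n-r\le n<\vert S\vert$; so $(0)$ never contributes a positive exponent in the relevant range. It therefore suffices to bound the number of \emph{nonzero proper} ideals $\bb$ for which $\ivr_i^{h,\{r\}}(S,\bb)>0$ for some $1\le i\le m$, where I write $m:=n-r$. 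If $m<1$ the index range is empty and there is nothing to prove, so I assume $m\ge 1$.

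Since $1\le n<\vert S\vert$ and $r\ge 0$, we have $m+1=n-r+1\le n+1\le\vert S\vert$, so I may fix $m+1$ pairwise distinct elements $s_0,s_1,\dots,s_m\in S$, chosen once and for all independently of $\bb$. Set $c:=\prod_{0\le j<k\le m}(s_k-s_j)\in\DD$, which is nonzero because the $s_i$ are distinct. The key claim is: for every nonzero proper ideal $\bb$, if $\ivr_i^{h,\{r\}}(S,\bb)>0$ for some $1\le i\le m$, then $\bb$ divides $(c)$. Granting this, the set of such $\bb$ is contained in the set of ideal divisors of the fixed nonzero ideal $(c)$; writing $(c)=\prod_l\pp_l^{e_l}$ for its finite prime factorization, these divisors are exactly $\prod_l\pp_l^{f_l}$ with $0\le f_l\le e_l$, a finite set. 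Together with the two degenerate cases this yields the asserted finiteness. The ``in particular'' statement then follows by taking $n=k+r$ (legitimate, since $k\le\vert S\vert-r-1$ forces $n<\vert S\vert$) and $i=k$: the finitely many non-unit surviving factors in \eqref{eqn:factorial-def} come from nonzero proper ideals, each exponent being finite because $k<\vert S\vert$ permits an ordering through distinct elements, so the product is a well-defined ideal.

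The heart of the argument is the claim, which I would prove in contrapositive form: if $\bb\nmid(c)$, then $\ivr_i^{h,\{r\}}(S,\bb)=0$ for all $0\le i\le m$. First, $\bb\nmid(c)$ forces $s_0,\dots,s_m$ to be pairwise incongruent modulo $\bb$, for if $s_j\equiv s_k\pmod{\bb}$ with $j<k$ then $(s_k-s_j)\subseteq\bb$, and since $(c)\subseteq(s_k-s_j)$ we would get $(c)\subseteq\bb$, i.e. $\bb\mid(c)$, a contradiction (using that containment and divisibility coincide for ideals of a Dedekind domain). Given pairwise incongruence, I would exhibit an explicit $r$-removed $\bb$-ordering of order $h$ whose first $m+1$ terms are $a_i=s_i$. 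For $0\le i\le r$ the terms are arbitrary, and for $r<i\le m$ the choice $a_i=s_i$ achieves $\min_{A}\sum_{j\in A}\min(h,\order_{\bb}(s_i-s_j))=0$, since every difference $s_i-s_j$ with $j<i$ satisfies $\order_{\bb}(s_i-s_j)=0$; as $0$ is the global minimum over $S$, this is a legitimate ordering step. Hence this ordering has $i$-th invariant $0$ for every $i\le m$, and by the well-definedness of the invariants (Theorem \ref{thm:NEW-MAIN}) we conclude $\ivr_i^{h,\{r\}}(S,\bb)=0$ for all $i\le m$, proving the claim.

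The main obstacle is precisely this contrapositive construction: one must verify that setting $a_i=s_i$ is an admissible step of an $r$-removed order-$h$ $\bb$-ordering (that the value $0$ is genuinely the global minimum over all $s\in S$) and that no two chosen elements collide modulo $\bb$. Once these are in place, the finiteness is immediate from the finiteness of the set of ideal divisors of $(c)$. I note that this approach needs neither the monotonicity of the exponent sequences nor any additivity of $\order_{\bb}$; it uses only that $\order_{\bb}(x)=0$ exactly when $x\notin\bb$, so it applies uniformly to composite ideals $\bb$.
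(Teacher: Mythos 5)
Your proposal is correct and follows essentially the same route as the paper's proof: fix distinct elements of $S$, observe that any ideal with a positive invariant in the given range must divide the differences of these elements (you package them into the single product $(c)$, the paper takes the union of divisors of the individual differences $(a_i-a_j)$ — an immaterial variation), and for all remaining ideals certify vanishing of the invariants by exhibiting these elements as the initial segment of a legitimate $r$-removed order-$h$ $\bb$-ordering and invoking well-definedness (Theorem \ref{thm:NEW-MAIN}). Your explicit treatment of the degenerate ideals $(0)$ and $(1)$ is a minor tidiness the paper handles implicitly.
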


\begin{proof} Since $n < |S|$ we may pick  $n+1$ distinct elements
$a_0, a_1, \cdots, a_{n-1}, a_n \in S$, which defines the initial part of an $S$-test sequence 
${\bf a} = (a_0, a_1, \cdots, a_n)$. 
There are only finitely many ideals $\bb'$ in $\sI(\DD)$ such that $\bb' \mid a_i- a_j$ for some $0 \le i < j \le n$,
since each  principal ideal $(a_i-a_j)$ has a  finite unique factorization into prime ideals (up to order of the prime ideal factors);
if this factorization has  $k$  prime ideals, then  has at most $2^k$ distinct ideal divisors $\bb'$.
For all other ideals $\bb \in \sI(\DD)$, the $S$-test sequence ${\bf a}$ has 
all its  $\bb$-exponents $\ivr_i^{h, \{r\}}(S, \bb, {\bf a})= 0$
for $1 \le i \le n-r$. 
This fact  certifies that the $S$-test sequence ${\bf a}$  is the initial part of an order $h$, $r$-removed,  $\bb$-ordering for $S$.
Therefore  $\ivr_i^{h, \{r\}}(S, \bb)= 0$ for $1 \le i \le n-r$, proving the finiteness. 

The finiteness certifies the right side of \eqref{eqn:factorial-def} has only finitely many $\bb \in \sI(\DD)$ having strictly positive exponents,
so it is a finite factorization. 
\end{proof}

%
%
\begin{lem}\label{lem:61} 
Let $S$ be a nonempty subset of the Dedekind domain $\DD$, and let  $\T\subseteq \sI(\DD)$. Then 
for positive integers $n<\vert S\vert-r$,
the generalized-integer ideals $[n]_{S,\T}$ are given by
\begin{equation}\label{eqn:integer-factorization-formula}
[n]_{S,\T}^{h, \{r\}}=\prod_{\bb\in\sT} \bb^{\gamma_n(S, \bb; h; \{r\})},
\end{equation}
in which
\begin{equation}
\gamma_n( S, \bb; h, \{r\})   :={\ivr_n^{h, \{r\}}(S,\bb)-\ivr_{n-1}^{h, \{r\}}(S,\bb)}.
\end{equation}
They are integral ideals.
\end{lem}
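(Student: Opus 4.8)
The plan is to read the formula off directly from the definitions, working inside the abelian group of nonzero fractional ideals of $\DD$, and then to deduce integrality from the monotonicity of the $\bb$-exponent sequences already established in Corollary~\ref{cor:k-increasing}. The substantive input has been done earlier; what remains is essentially exponent bookkeeping.

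First I would invoke Lemma~\ref{lem:60} together with the range hypothesis $n<\vert S\vert-r$. The hypothesis guarantees (as recorded in the remarks following Definition~\ref{definition:factorials-S-T}) that both $[n]!_{S,\T}^{h, \{r\}}$ and $[n-1]!_{S,\T}^{h, \{r\}}$ are \emph{nonzero}, while Lemma~\ref{lem:60} guarantees that each is a genuine finite product of integral ideals, the remaining factors being the unit ideal. Hence both are well-defined nonzero fractional ideals and their quotient is well-defined. By Definition~\ref{definition:integers-S-T},
$$
[n]_{S,\T}^{h, \{r\}} = \frac{[n]!_{S,\T}^{h, \{r\}}}{[n-1]!_{S,\T}^{h, \{r\}}} = \left(\prod_{\bb\in\sT}\bb^{\ivr_n^{h, \{r\}}(S,\bb)}\right)\left(\prod_{\bb\in\sT}\bb^{\ivr_{n-1}^{h, \{r\}}(S,\bb)}\right)^{-1}.
$$

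Next I would carry out the exponent arithmetic. Since the nonzero fractional ideals of the Dedekind domain $\DD$ form a commutative group under ideal multiplication, each factor $\bb$ --- prime or composite --- is a single element of this group, so for each fixed $\bb$ one has $\bb^{a}\,\bb^{-b}=\bb^{a-b}$. Collecting the contribution of each $\bb\in\sT$ therefore yields
$$
[n]_{S,\T}^{h, \{r\}} = \prod_{\bb\in\sT}\bb^{\ivr_n^{h, \{r\}}(S,\bb)-\ivr_{n-1}^{h, \{r\}}(S,\bb)} = \prod_{\bb\in\sT}\bb^{\gamma_n(S,\bb;h,\{r\})},
$$
which is the asserted formula \eqref{eqn:integer-factorization-formula}; by Lemma~\ref{lem:60} only finitely many factors differ from $(1)$, so no convergence issue arises.

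Finally, for integrality I would use that each exponent $\gamma_n$ is nonnegative. By Corollary~\ref{cor:k-increasing} (which rests on Proposition~\ref{prop:t-sequence-increasingA}) each sequence $\bigl(\ivr_k^{h, \{r\}}(S,\bb)\bigr)_{k\ge0}$ is nondecreasing, so $\gamma_n(S,\bb;h,\{r\})=\ivr_n^{h, \{r\}}(S,\bb)-\ivr_{n-1}^{h, \{r\}}(S,\bb)\ge 0$ for every $\bb\in\sT$. A finite product of integral ideals raised to nonnegative integer powers is an integral ideal, whence $[n]_{S,\T}^{h, \{r\}}$ is integral. The only point demanding any care is the fractional-ideal bookkeeping in the middle step --- namely the legitimacy of combining the exponents of \emph{composite} ideals $\bb$ --- but this is precisely guaranteed by the commutative group structure on fractional ideals; the genuinely substantive ingredient, the monotonicity $\gamma_n\ge0$, is imported from the earlier results.
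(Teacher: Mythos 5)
Your proof is correct and follows essentially the same route as the paper's: read the formula off from Definitions \ref{definition:integers-S-T} and \ref{definition:factorials-S-T}, appeal to Lemma \ref{lem:60} for well-definedness of the products, and deduce integrality from the nonnegativity of the exponent differences via Corollary \ref{cor:k-increasing}. The one point to add is that Corollary \ref{cor:k-increasing} is stated conditionally on the existence of a map satisfying Property $\PC(\DD,\bb,\ddR)$, so applying it to every $\bb\in\sT$ in a general Dedekind domain requires citing Theorem \ref{thm:varphi_B-general} to discharge that hypothesis, which is exactly how the paper phrases the step (``Theorem \ref{thm:varphi_B-general} combined with Corollary \ref{cor:k-increasing}'').
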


\begin{proof}
This formula (viewed as a fractional ideal) follows from Definition \ref{definition:integers-S-T} and Definition  \ref{definition:factorials-S-T}.
However we now know each term $b^{\ivr_n^{h, \{r\}}(S,\bb)-\ivr_{n-1}^{h, \{r\}}(S,\bb)}$
  is an integral ideal since 
$$
\ivr_n^{h, \{r\}}(S,\bb)-\ivr_{n-1}^{h, \{r\}} (S,\bb)\ge 0
$$
holds for all $n \ge 1$ by Theorem \ref{thm:varphi_B-general} combined with 
Corollary \ref{cor:k-increasing}. 

The  right side of 
the formula \eqref{eqn:integer-factorization-formula} is well-defined by appeal to the
finiteness Lemma \ref{lem:60}. 
\end{proof}

%
%
\begin{lem}\label{lem:62}
Let $S$ be a nonempty subset of the Dedekind domain  $\DD$. Let $\T\subseteq\sI(\DD)$. Then for integers 
$0\le\ell\le k<\vert S\vert-r$,
the generalized binomial coefficients  ${k\brack\ell}_{S,\T}$  are given by
\begin{equation}\label{eqn:binomial-factorization-formula}
{k\brack\ell}_{S,\T}^{h, \{r\}}=\prod_{\bb\in\T} \bb^{ \ivr_k^{h,\{r\}}(S,\bb)-\ivr_\ell^{h, \{r\}}(S,\bb)-\ivr_{k-\ell}^{h, \{r\}}(S,\bb)}.
\end{equation}
They are integral ideals. 
\end{lem}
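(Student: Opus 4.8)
The plan is to obtain the product formula directly from the definition of the binomial coefficient as a ratio of generalized factorial ideals, and then to extract integrality from the superadditivity already proved for the $\bb$-exponent sequences. The first step is to match the index conventions: the coefficient ${k \brack \ell}_{S,\T}^{h,\{r\}}$ of the statement corresponds, in the notation of Definition \ref{definition:binomial-S-T}, to the parameter pair $(k-\ell,\ell)$, whose top index equals $(k-\ell)+\ell=k$. Hence by definition
\[
{k \brack \ell}_{S,\T}^{h,\{r\}}=\frac{[k]!_{S,\T}^{h,\{r\}}}{[k-\ell]!_{S,\T}^{h,\{r\}}\,[\ell]!_{S,\T}^{h,\{r\}}},
\]
which a priori is a fractional ideal of $\DD$.

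Next I would insert the product expansion $[m]!_{S,\T}^{h,\{r\}}=\prod_{\bb\in\T}\bb^{\ivr_m^{h,\{r\}}(S,\bb)}$ from Definition \ref{definition:factorials-S-T} for each of the three factorials, noting by Lemma \ref{lem:60} that each is a \emph{finite} product since $k,\ell,k-\ell$ are all less than $|S|-r$. Because every nonzero fractional ideal of a Dedekind domain is invertible, the fractional ideals form an abelian group; I may therefore combine the three finite products termwise over each $\bb$ to obtain
\[
{k \brack \ell}_{S,\T}^{h,\{r\}}=\prod_{\bb\in\T}\bb^{\ivr_k^{h,\{r\}}(S,\bb)-\ivr_\ell^{h,\{r\}}(S,\bb)-\ivr_{k-\ell}^{h,\{r\}}(S,\bb)},
\]
which is the asserted formula.

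For integrality it then suffices to verify that each exponent $\ivr_k^{h,\{r\}}(S,\bb)-\ivr_\ell^{h,\{r\}}(S,\bb)-\ivr_{k-\ell}^{h,\{r\}}(S,\bb)$ is nonnegative, since a finite product of integral ideals $\bb^{e}$ with $e\ge0$ is again integral. For each nonzero proper ideal $\bb$, Theorem \ref{thm:varphi_B-general} provides a map $\varphi_\bb$ satisfying Property $\PC(\DD,\bb,K')$, so Corollary \ref{corollary:binomial-B} applies to the two indices $\ell$ and $k-\ell$ (whose sum is $k$) and yields $\ivr_k^{h,\{r\}}(S,\bb)\ge\ivr_\ell^{h,\{r\}}(S,\bb)+\ivr_{k-\ell}^{h,\{r\}}(S,\bb)$, as wanted. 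The two exceptional ideals contribute nothing: for $\bb=(1)$ every factor is $(1)$ regardless of exponent, while for $\bb=(0)$ the hypothesis $k<|S|-r\le(r+1)|S|$ places all three indices in the regime where the zero-ideal exponents vanish (by the computation in the proof of Theorem \ref{thm:NEW-MAIN}), so the exponent is $0$ and the factor is $(0)^0=(1)$.

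There is no genuine obstacle here, as the analytic content — the superadditivity inequality — is already packaged in Corollary \ref{corollary:binomial-B} and ultimately in Theorem \ref{thm:bhar-integral} for $K[[t]]$, so no new inequality need be proved. The only point demanding care is bookkeeping: translating the top-index convention of Definition \ref{definition:binomial-S-T} into the pair $(\ell,k-\ell)$ so that superadditivity is applied to the correct indices, and confirming that the range $0\le\ell\le k<|S|-r$ simultaneously keeps all three factorials nonzero (via Lemma \ref{lem:60}) and keeps the $(0)$-exponents in their vanishing range.
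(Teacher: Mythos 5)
Your proof is correct and follows essentially the same route as the paper: unwind Definition \ref{definition:binomial-S-T} and Definition \ref{definition:factorials-S-T} to get the product formula (finite and well-defined by Lemma \ref{lem:60}), then deduce integrality of each factor from the superadditivity inequality of Corollary \ref{corollary:binomial-B} applied via the Property $\PC$ maps of Theorem \ref{thm:varphi_B-general}. Your explicit treatment of the index translation and of the exceptional ideals $\bb=(0)$ and $\bb=(1)$ (to which Property $\PC$ and hence Corollary \ref{corollary:binomial-B} do not directly apply) makes rigorous two points the paper's own proof leaves implicit.
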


\begin{proof}
This formula (viewed as a fractional ideal)
follows from Definition \ref{definition:binomial-S-T} and Definition \ref{definition:factorials-S-T}.
However we now know each term $\bb^{\ivr_k^{h, \{r\}}(S,\bb)-\ivr_\ell^{h, \{r\}}(S,\bb)-\ivr_{k-\ell}^{h, \{r\}}(S,\bb)}$
 is an integral ideal since 
$$
\ivr_k^{h, \{r\}}(S,\bb)-\ivr_\ell^{h, \{r\}}(S,\bb)-\ivr_{k-\ell}^{h, \{r\}}(S,\bb) \ge 0
$$
holds for all $k \ge \ell \ge 0$, using   Theorem \ref{thm:varphi_B-general} combined with 
Corollary \ref{corollary:binomial-B}.
 
The right side of the  formula \eqref{eqn:binomial-factorization-formula} to be well-defined, 
by appeal to the finiteness Lemma \ref{lem:60}. \end{proof}

%
%
%
%
\subsection{Proofs of inclusion  and integrality properties in  Section \ref{sec:2} }\label{subsec:61} 

We prove Proposition \ref{prop:factorial-ordering}, Theorem \ref{thm:integer-integer} and
Theorem \ref{thm:binomial-integer}.
%
%
\begin{proof}[Proof~of~Proposition~\emph{\ref{prop:factorial-ordering}}]
  We have $[k]_{S, \sT_2}^{h, \{r\}}$ is divisible by  $[k]!_{S, \sT_1}^{h, \{r\}}$
 because
$$
[k]!_{S,\T_2}^{h, \{r\}}=[k]!_{S,\T_2\backslash\T_1}^{h, \{r\}}[k]!_{S,\T_1}^{h, \{r\}}
$$
and generalized factorials $[k]!_{S, \T}$ are integral ideals in $\DD$.
\end{proof}
%
%

\begin{proof}[Proof~of~Theorem~\emph{\ref{thm:integer-integer}}]
It follows from Theorem \ref{thm:varphi_B-general} and Corollary \ref{cor:k-increasing} that
$$
\ivr_n^{h, \{r\}}(S,\bb)\ge\ivr_{n-1}^{h, \{r\}}(S,\bb)\quad\mbox{for all}\quad\bb\in\sI(\DD).
$$
The result then follows from \eqref{eqn:integer-factorization-formula}.
\end{proof}

%
%

\begin{proof}[Proof~of~Theorem~\emph{\ref{thm:binomial-integer}}]
It follows from Theorem \ref{thm:varphi_B-general} and Corollary \ref{corollary:binomial-B} that
$$
\ivr_{k+\ell}^{h, \{r\}}(S,\bb)\ge\ivr_k^{h, \{r\}}(S,\bb)+\ivr_{\ell}^{h, \{r\}}(S,\bb)\quad\mbox{for all}\quad\bb\in\sI(\DD).
$$
The result then follows from \eqref{eqn:binomial-factorization-formula}.
\end{proof}

%
%
\section{Concluding remarks}\label{sec:8}

\begin{enumerate}
\item[(1)]
 This paper  establishes $\bb$-sequenceablity  for all nonzero proper ideals $\bb$ of 
Dedekind domains $\DD$. One may consider more generally {\em Dedekind-type rings},
which are rings of the form  $\RRR= \DD/ \aaa$, for some ideal $\aaa$, which may have
zero divisors. 
It is known that if $\aaa$ is a proper ideal, then any such $R$, which is  a
semi-local ring,  inherits  unique factorization of ideals; moreover it is a principal ideal domain.
Which ideals $\bb$ of such a ring are $\bb$-sequenceable? 
It is possible to use  Condition C to show $\bb$-sequenceability of all proper ideals in  some  rings, e.g. $\ZZ/4\ZZ$
 in  Remark  \ref{rem:41} (2).

\item[(2)]
The proofs of this paper depend on   
an appeal to Bhargava's Theorem \ref{thm:T-invariant}  
giving well-definedness of his refined $t$-invariants for a local ring, used as a black box. 
Independent proofs of  invariance of the individual refined $t$-orderings
in  Theorem \ref{thm:T-invariant}  would yield independent proofs of our main result.
 In Theorem 4.3 of \cite{LY:24a} we
gave  an independent proof covering the plain $t$-ordering case, which is 
 $r=0, h= +\infty$. 
 To state the result,  we say that polynomial $p(x;t)$ of degree $k$  in $x$, with coefficients in $\ddR[[t]]$, 
is \emph{$t$-primitive} if $\dnu\left(\left[x^i\right]p(x;t)\right)=0$ for some  $i$, $0 \le i \le k$.

%
%
\begin{thm}[Max-min characterization of plain $t$-exponent sequences]\label{thm:max-min}
Let $\ddS$ be a nonempty subset of a formal power series ring $\KKK[[t]]$, for $\KKK$ a field.
Let $\mathbf{f}=\left(f_i(t)\right)_{i=0}^\infty$ be a $\tp$-ordering of $\ddS$.
Then
\begin{equation}\label{equation:nu=maxmin0}
\ddnu_k(\ddS,  t \ddR[[t]], \mathbf{f})=\max_{p(x;t)\in \Prim_k} \left[ \,\min_{f(t) \in \ddS} \left\{\dnu(p(f(t);t))\right\}\right],
\end{equation}
where the maximum runs over the set $\Prim_k$ of all $t$-primitive polynomials $p(x;t)$ of degree 
at most $k$ in $x$.
\end{thm}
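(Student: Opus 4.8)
The plan is to prove the two inequalities implicit in \eqref{equation:nu=maxmin0} separately, but to route everything through a single clean formula for $\min_{f(t)\in\ddS}\dnu(p(f(t);t))$ expressed in a Newton-type basis adapted to the given $\tp$-ordering. Write $\ddnu_n := \ddnu_n(\ddS, t\ddR[[t]], \mathbf{f})$ for the plain $t$-exponent invariants, and introduce the monic polynomials $p_n(x;t):=\prod_{j=0}^{n-1}(x-f_j(t))$ of degree $n$. Since the change of basis from $\{p_0,\dots,p_k\}$ to $\{1,x,\dots,x^k\}$ is unitriangular with coefficients in $\ddR[[t]]$ (its entries are the elementary symmetric functions of $f_0,\dots,f_{n-1}$, which lie in $\ddR[[t]]$), the family $\{p_n\}_{n=0}^k$ is an $\ddR[[t]]$-basis of the degree-$\le k$ polynomials, so I may write $p(x;t)=\sum_{n=0}^{k} b_n(t)\,p_n(x;t)$ with $b_n\in\ddR[[t]]$. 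Two observations are recorded first: (a) $p_n(f_m;t)=0$ whenever $m<n$ (the factor $x-f_m$ occurs), while for $m\ge n$ the defining minimality of the $\tp$-ordering gives $\dnu(p_n(f_m;t))=\sum_{j<n}\dnu(f_m-f_j)\ge \ddnu_n$, with equality $\dnu(p_n(f_n;t))=\ddnu_n$; and (b) $t$-primitivity of $p$ forces at least one $b_n$ to be a unit, because if every $b_n\in t\ddR[[t]]$ then every monomial coefficient $a_m=\sum_{n\ge m}b_n e_{n,m}$ also lies in $t\ddR[[t]]$, contradicting $p\in\Prim_k$.

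The heart of the argument is the identity
\begin{equation*}
\min_{f(t)\in\ddS}\dnu(p(f(t);t))=\min_{0\le n\le k}\bigl[\dnu(b_n)+\ddnu_n\bigr]=:\mu .
\end{equation*}
The lower bound is immediate: for every $f\in\ddS$, nonarchimedean additivity of $\dnu$ together with observation (a) gives $\dnu(p(f;t))\ge\min_n[\dnu(b_n)+\dnu(p_n(f;t))]\ge\min_n[\dnu(b_n)+\ddnu_n]=\mu$. For the upper bound I evaluate at a carefully chosen node. Let $n^\ast$ be the \emph{smallest} index attaining the minimum $\mu$, and evaluate at $f=f_{n^\ast}$:
\begin{equation*}
p(f_{n^\ast};t)=\sum_{n=0}^{n^\ast} b_n(t)\,p_n(f_{n^\ast};t),
\end{equation*}
where the terms with $n>n^\ast$ vanish by (a). The term $n=n^\ast$ has valuation exactly $\dnu(b_{n^\ast})+\ddnu_{n^\ast}=\mu$, while each term $n<n^\ast$ has valuation $\ge\dnu(b_n)+\ddnu_n>\mu$ by the minimality of $n^\ast$. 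Thus a single term attains the minimal valuation, no cancellation can occur, and $\dnu(p(f_{n^\ast};t))=\mu$, giving $\min_f\dnu(p(f;t))\le\mu$ and hence the identity.

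With the identity in hand both directions of \eqref{equation:nu=maxmin0} follow quickly. For ``$\ge$'', take $p=p_k$, which is monic (so in $\Prim_k$) with Newton coefficients $b_k=1$ and $b_n=0$ for $n<k$; then $\mu=\ddnu_k$, so $\min_f\dnu(p_k(f;t))=\ddnu_k$ and the maximum over $\Prim_k$ is at least $\ddnu_k$. For ``$\le$'', let $p\in\Prim_k$ be arbitrary and choose by (b) an index $n'$ with $b_{n'}$ a unit; then $\mu\le\dnu(b_{n'})+\ddnu_{n'}=\ddnu_{n'}\le\ddnu_k$, where the last inequality is the monotonicity of the plain $t$-exponent sequence (Proposition \ref{prop:t-sequence-increasingA}). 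Hence $\min_f\dnu(p(f;t))=\mu\le\ddnu_k$ for every $p\in\Prim_k$, so the maximum is at most $\ddnu_k$, completing the proof.

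I expect the main obstacle to be precisely the upper bound in the key identity: a naive evaluation of $p$ at an ordering node may produce several Newton terms of equal minimal valuation, so that cancellation could make $\dnu(p(f;t))$ strictly larger than $\mu$. The device that removes this difficulty is the choice of $n^\ast$ as the smallest minimizing index, which guarantees that the $n^\ast$-term is the \emph{unique} term of minimal valuation. The only remaining care concerns degenerate cases (for instance $\ddS$ finite, or $p$ vanishing on $\ddS$ so that $\mu=+\infty$), which are handled by the standard conventions $\dnu(0)=+\infty$ and $\ddnu_n=+\infty$, under which both sides of the identity remain equal.
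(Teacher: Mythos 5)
Your proof is correct. Note that the paper itself does not prove Theorem \ref{thm:max-min}: it invokes the result as a black box, citing \cite[Theorem 4.5]{LY:24a}, and records only that the maximum in \eqref{equation:nu=maxmin0} is attained at $q_k(x;t)=\prod_{j=0}^{k-1}\left(x-f_j(t)\right)$. Your argument therefore supplies a complete, self-contained proof, and its two halves line up with that remark: the ``$\ge$'' direction is exactly evaluation at $q_k=p_k$, while the ``$\le$'' direction rests on your key identity $\min_{f\in\ddS}\dnu(p(f;t))=\min_n\left[\dnu(b_n)+\ddnu_n\right]$ in the Newton basis attached to the ordering, with the no-cancellation step secured by taking the \emph{smallest} minimizing index $n^\ast$. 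Every step checks out: the unitriangular change of basis is valid over $\ddR[[t]]$, $t$-primitivity does force some $b_n$ to be a unit, the appeal to Proposition \ref{prop:t-sequence-increasingA} for $\ddnu_{n'}\le\ddnu_k$ introduces no circularity (that proposition is proved directly from the definitions, independently of the max-min formula), and the degenerate case $\mu=+\infty$ is covered by the stated conventions. One small wording fix: your observation (a) is stated only at the nodes $f_m$, but the lower bound in the key identity uses $\dnu(p_n(f;t))=\sum_{j<n}\dnu(f-f_j)\ge\ddnu_n$ for \emph{every} $f\in\ddS$; this is precisely the minimization over all of $\ddS$ in the definition of a plain $\tp$-ordering, so the fact is the one you are already invoking --- simply state (a) for arbitrary $f\in\ddS$ rather than only for the ordering elements.
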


This result supplies  a max-min condition characterizing the invariants,
which manifestly demonstrates  independence of the $t$-ordering.
The proof of this result given in \cite{LY:24a} assumes  $\ddR= \QQ$, 
but  it remains  valid for fields $K$ of any characteristic.
It shows that the maximum in
\eqref{equation:nu=maxmin0}
is attained for the primitive polynomial
$$
q_k(x;t):=\prod_{j=0}^{k-1}\left(x-f_j(t)\right) \in \Prim_k,
$$
which has  degree exactly $k$, so that, 
\begin{equation}\label{eqn:equality-qk}
\ddnu_k(\ddS,t \ddR[[t]] \mathbf{f})=\min_{f(t) \in \ddS} \left\{\dnu\left(q_k(f(t);t)\right)\right\}.
\end{equation}
We have also obtained such a max-min criterion  valid for $r$-removed $p$-orderings (with $h= +\infty$, unpublished).

\item[(3)]
Proposition \ref{prop:factorial-ordering} establishes  a divisibility property (resp. inclusion relation) of generalized factorials
for arbitrary $(h, \{r\})$-orderings on sets $\SSS$ of a Dedekind domain. 
on varying the set of ideals $\T$ while holding the set $\SSS$ fixed.

Concerning parallel divisibility properties (resp. inclusion relations), 
for variation of the set $\SSS$ while holding $\sT$ fixed, in Proposition 5.4 (3) of \cite{LY:24a} we showed that, for $D= \ZZ$,
if $S_1 \subset S_2$ and $\T \subset \sI^{\ast}(\ZZ)$ is fixed, then for ``plain" $\T$-orderings ($r=0$, $h= +\infty$), 
and $0 \le k <|S_1|$, 
\begin{equation}\label{eq:inclusion-S}
[k]!_{S_2,\T}  \quad\text{divides}\quad[k]!_{S_1,\T},
\end{equation}
equivalently, regarded as ideals,  
\begin{equation}\label{eq:inclusion-S2}
[k]!_{S_1,\T}  \quad\text{is contained in}\quad[k]!_{S_2,\T}.
\end{equation}
These results are proved  invoking the max-min criterion Theorem \ref{thm:max-min}.

Since  Proposition 5.4 for $\DD=\ZZ$ of \cite{LY:24a} generalizes 
 for  Dedekind domains $\DD$, 
 Theorem \ref{thm:max-min} applies to give 
  divisibility  relations \eqref{eq:inclusion-S} and inclusion relations \eqref{eq:inclusion-S2} also hold
for  ``plain" $\bb$-orderings for arbitrary Dedekind domains $\DD$. 

The divisibility results  can be extended further  to apply for  refined $\bb$-orderings, which are $r$-removed for any $r \ge 0$,
 but of  order $h=+\infty$,
using  a generalized version of the max-min criterion for $r$-removed orderings 
mentioned in (2). 
It remains to determine whether a parallel  divisibility property (resp. inclusion relation)  
  holds in the general case varying $\SSS$ and holding $\T$ fixed, for arbitrary $(h, \{r\})$-orderings
for Dedekind domains.


\item[(4)]
 In \cite{DLY:25+} the  authors, together with Lara Du,
 study for $\DD=\ZZ$ properties of the  
 products of the generalized binomial coefficients
$$
\prod_{\ell=0}^n {n\brack\ell}_{\mathbb{Z},\sI(\ZZ)}
$$
on the n-th row of the corresponding
Pascal triangle. The properties concern the 
partial factorizations of these products,  taken over all the primes $p \le \alpha n$ for
fixed $\alpha$ with $0 < \alpha \le 1$, letting $n \to \infty$.
Their results are compared with results on partial factorizations of the
usual binomial coefficients, carried out in \cite{DL:22}. The latter results
relate to the Riemann hypothesis.

The present paper allows not to  
 define for Dedekind domains $\DD$ generalized factorials and generalized binomial
coefficients, in which,  where  for the $n$-th factorial, in  the
upper index of a binomial coefficient the set  $\sT= \sT_n$ is allowed to (monotonically) 
vary  with $n \ge 1$. 
When the ring $\DD$ has a norm function on ideals, one can 
study the sizes of such partial factorizations,  
as $n \to \infty$.
\end{enumerate}


\paragraph{\bf Acknowledgments} 
The first author was partially supported by NSF grant DMS-1701576. The second author
was partially supported by NSF grant DMS-1701577.


%
%

\end{document}